\definecolor{Bleu}{RGB}{0,0,204}
\newcommand{\secref}[1]{\hyperref[#1]{Section~\ref{#1}}}
\newcommand{\condref}[1]{\hyperref[#1]{Condition~\ref{#1}}}
\newcommand{\figref}[1]{\hyperref[#1]{Figure~\ref{#1}}}
\newcommand{\tabref}[1]{\hyperref[#1]{Table~\ref{#1}}}
\newtheorem{theorem}{Theorem}
\newtheorem{corollary}{Corollary}
\newtheorem{lemma}{Lemma}
\newtheorem{remark}{Remark}
\newcommand{\openr}{\hbox{${\rm I\kern-.2em R}$}}
\newcommand{\openn}{\hbox{${\rm I\kern-.2em N}$}}
\newcommand{\Rem}{\operatorname{Rem}}
\newcommand{\norm}[1]{\lVert#1\|}
\newcommand{\onenorm}[1]{\lVert#1\|_{1,P_0}}
\newcommand{\twonorm}[1]{\lVert#1\|_{2,P_0}}
\newcommand{\fournorm}[1]{\lVert#1\|_{4,P_0}}
\def\eqd{\,{\buildrel d \over =}\,} 
\newcommand{\Un}{\mathbb{U}_n}
\newcommand{\Id}{\textnormal{Id}}
\newcommand{\expit}{\operatorname{expit}}
\title{An Omnibus Nonparametric Test of Equality in Distribution for Unknown Functions}
\author[1,2]{Alexander R.  Luedtke}
\affil[1]{\footnotesize   Vaccine  and   Infectious  Disease   Division,  Fred
  Hutchinson Cancer Research Center, USA}
\affil[2]{\footnotesize   Public Health Sciences  Division,  Fred
  Hutchinson Cancer Research Center, USA}
\author[3]{Mark J. van der Laan}
\affil[3]{\footnotesize   Division of Biostatistics, University of California, Berkeley, USA} 
\author[4,1]{Marco Carone}
\affil[4]{\footnotesize   Department of Biostatistics, University of Washington, USA}
\date{}
\begin{document}
\maketitle

\begin{abstract}
We present a novel family of nonparametric omnibus tests of the hypothesis that two unknown but estimable functions are equal in distribution when applied to the observed data structure. We developed these tests, which represent a generalization of the maximum mean discrepancy tests described in \cite{Grettonetal2006},  using recent developments from the higher-order pathwise differentiability literature. Despite their complex derivation, the associated test statistics can be expressed rather simply as U-statistics. We study the asymptotic behavior of the proposed tests under the null hypothesis and under both fixed and local alternatives. We provide examples to which our tests can be applied and show that they perform well in a simulation study. As an important special case, our proposed tests can be used to determine whether an unknown function, such as the conditional average treatment effect, is equal to zero almost surely.
\end{abstract}
{\small\textbf{Kewyords:} higher order pathwise differentiability, maximum mean discrepancy, omnibus test, equality in distribution, infinite dimensional parameter.}

\section{Introduction}

In many scientific problems, it is of interest to determine whether two particular functions are equal to each other. In many settings these functions are unknown and may be viewed as features of a data-generating mechanism from which observations can be collected. As such, these functions can be learned from available data, and estimates of these respective functions can then be compared. To reduce the risk of deriving misleading conclusions due to model misspecification, it is appealing to employ flexible statistical learning tools to estimate the unknown functions. Unfortunately, inference is usually extremely difficult when such techniques are used, because the resulting estimators tend to be highly irregular. In such cases, conventional techniques for constructing confidence intervals or computing p-values are generally invalid, and a more careful construction, as exemplified by the work presented in this article, is required. 

To formulate the problem statistically, suppose that $n$ independent observations $O_1,\ldots,O_n$ are drawn from a distribution $P_0$ known only to lie in the nonparametric statistical model, denoted by $\mathscr{M}$. Let $\mathscr{O}$ denote  the support of $P_0$, and suppose that $P\mapsto R_P$ and $P\mapsto S_P$ are parameters mapping from $\mathscr{M}$ onto the space of univariate bounded real-valued measurable functions defined on $\mathscr{O}$, i.e. $R_P$ and $S_P$ are elements of the space of univariate bounded real-valued measurable functions defined on $\mathscr{O}$. For brevity, we will write $R_0\triangleq R_{P_0}$ and $S_0\triangleq S_{P_0}$. Our objective is to test the null hypothesis \[\mathscr{H}_0: R_0(O)\stackrel{d}{=}S_0(O)\] versus the complementary alternative $\mathscr{H}_1: \mbox{not }\mathscr{H}_0$, where $O$ follows the distribution $P_0$ and the symbol $\eqd$ denotes equality in distribution.  We note that $R_0(O)\eqd S_0(O)$ if $R_0\equiv S_0$, i.e. $R_0(O)=S_0(O)$ almost surely, but not conversely. The case where $S_0\equiv 0$ is of particular interest since then the null simplifies to $\mathscr{H}_0 : R_0\equiv 0$. Because $P_0$ is unknown, $R_0$ and $S_0$ are not readily available. Nevertheless, the observed data can be used to estimate $P_0$ and hence each of $R_0$ and $S_0$. The approach we propose will apply to functionals within a specified class described later.

Before presenting our general approach, we describe some motivating examples. Consider the data structure $O=(W,A,Y)\sim P$, where $W$ is a collection of covariates, $A$ is binary treatment indicator, and $Y$ is a bounded outcome, i.e., there exists a universal $c$ such that, for all $P\in\mathcal{M}$, $P(|Y|\le c) = 1$. Note that, in our examples, the condition that $Y$ is bounded cannot easily be relaxed, as the parameter from \cite{Grettonetal2006} on which we will base our testing procedure requires that the quantities under consideration have compact support. \begin{enumerate}
\item[] \hspace{-.2in}\textbf{\hypertarget{ex:1}{Example 1}:} Testing a null conditional average treatment effect.

If $R_P(o)\triangleq E_P\left(Y\mid A=1,W=w\right)-E_P\left(Y\mid A=0,W=w\right)$ and $S_P\equiv 0$, the null hypothesis corresponds to the absence of a conditional average treatment effect. This definition of $R_P$ corresponds to the so-called blip function introduced by \cite{Robins04}, which plays a critical role in defining optimal personalized treatment strategies \citep{Chakraborty&Moodie2013}.

\item[] \hspace{-.2in}\textbf{\hypertarget{ex:2}{Example 2}:} Testing for equality in distribution of regression functions in two populations.

Suppose the setting of the previous example, but where $A$ represents membership to population $0$ or $1$. If $R_P(o)\triangleq E_P\left(Y\mid A=1,W=w\right)$ and $S_P(o)\triangleq E_P\left(Y\mid A=0,W=w\right)$, the null hypothesis corresponds to the outcome having the conditional mean functions, applied to a random draw of the covariate, having the same distribution in these two populations. We note here that our formulation considers selection of individuals from either population as random rather than fixed so that population-specific sample sizes (as opposed to the total sample size) are themselves random. The same interpretation could also be used for the previous example, now testing if the two regression functions are equivalent.

\item[] \hspace{-.2in}\textbf{\hypertarget{ex:3}{Example 3}:} Testing a null covariate effect on average response.

Suppose now that the data unit only consists of $O\triangleq (W,Y)$. If $R_P(o)\triangleq E_P\left(Y\mid W=w\right)$ and $S_P\equiv 0$, the null hypothesis corresponds to the outcome $Y$ having conditional mean zero in all strata of covariates. This may be interesting when zero has a special importance for the outcome, such as when the outcome is the profit over some period.

\item[] \hspace{-.2in}\textbf{\hypertarget{ex:4}{Example 4}:} Testing a null variable importance.

Suppose again that $O\triangleq (W,Y)$ and $W\triangleq (W(1),W(2),\ldots,W(K))$. Denote by $W(-k)$ the vector $(W(i):1\leq i\leq K, i\neq k)$. Setting $R_P(o)\triangleq E_P\left(Y\mid W=w\right)$ and $S_P(o)\triangleq E_P\left(Y\mid W(-k)=w(-k)\right)$, the null hypothesis corresponds to $W(k)$ having null variable importance in the presence of $W(-k)$ with respect to the conditional mean of $Y$ given $W$ in the sense that $E_{P}\left(Y\mid W\right)=E_P\left(Y\mid W(-k)\right)$ almost surely. This is true because if $R_0(W)\eqd S_0(W(-k))$, the latter random variables have equal variance and so \begin{align*}
E_{P_0}&\left\{Var_{P_0}\left[R_0(W)\mid W(-k)\right]\right\}\ \\
&=\ Var_{P_0}\left[R_0(W)\right]-Var_{P_0}\left\{E_{P_0}\left[R_0(W)\mid W(-k)\right]\right\}\\
&=\ Var_{P_0}\left[R_0(W)\right]-Var_{P_0}\left[S_0(W(-k))\right]\ =\ 0\ ,
\end{align*} implying that $Var_{P_0}\left[R_0(W)\mid W(-k)\right]=0$ almost surely. Thus, a test of $R_P(O)\eqd S_P(O)$ is equivalent to a test of almost sure equality between $R_P$ and $S_P$ in this example. We will show in \secref{sec:illustr} that our approach cannot be directly applied to this example, but that a simple extension yields a valid test.
\end{enumerate}

\cite{Grettonetal2006} investigated the related problem of testing equality between two distributions in a two-sample problem. They proposed estimating the maximum mean discrepancy (hereafter referred to as MMD), a non-negative numeric summary that equals zero if and only if the two distributions are equal.  They also  investigated related problems using this technique \citep[see, e.g.,][]{Grettonetal2009,Grettonetal2012,Sejdinovicetal2013}. In this work, we also utilize the MMD as a parsimonious summary of equality but consider the more general problem wherein the null hypothesis relies on unknown functions $R_0$ and $S_0$ indexed by the data-generating distribution $P_0$.

Other investigators have proposed omnibus tests of hypotheses of the form $\mathscr{H}_0$ versus $\mathscr{H}_1$ in the literature. In the setting of \hyperlink{ex:1}{Example 1} above, the work presented in \cite{Racineetal2006} and \cite{Lavergneetal2015} is particularly relevant. The null hypothesis of interest in these papers consists of the equality $E_{P_0}\left(Y\mid A,W\right)=E_{P_0}\left(Y\mid W\right)$ holding almost surely. If individuals have a nontrivial probability of receiving treatment in all strata of covariates, this null hypothesis is equivalent to $\mathscr{H}_0$. In both these papers, kernel smoothing is used to estimate the required regression functions. Therefore, key smoothness assumptions are needed for their methods to yield valid conclusions. The method we present does not hinge on any particular class of estimators and therefore does not rely on this condition.

To develop our approach, we use techniques from the higher-order pathwise differentiability literature \citep[see, e.g.,][]{Pfanzagl1985,RobinsetTchetgenvanderVaart08,vanderVaart2014,Caroneetal2014}. Despite the elegance of the theory presented by these various authors, it has been unclear whether these higher-order methods are truly useful in infinite-dimensional models since most functionals of interest fail to be even second-order pathwise differentiable in such models. This is especially troublesome in problems in which under the null the first-order derivative of the parameter of interest (in an appropriately defined sense) vanishes, since then there seems to be no theoretical basis for adjusting parameter estimates to recover parametric rate asymptotic behavior. At first glance, the MMD parameter seems to provide one such disappointing example, since its first-order derivative indeed vanishes under the null. The latter fact is a common feature of problems wherein the null value of the parameter is on the boundary of the parameter space. It is also not an entirely surprising phenomenon, at least heuristically, since the MMD achieves its minimum of zero under the null hypothesis. Nevertheless,  we are able to show that this parameter is indeed second-order pathwise differentiable under the null hypothesis -- this is a rare finding in infinite-dimensional models. As such, we can employ techniques from the recent higher-order pathwise differentiability literature to tackle the problem at hand. To the best of our knowledge, this is the first instance in which these techniques are directly used (without any form of approximation) to resolve an open methodological problem.

This paper is organized as follows. In \secref{sec:rep}, we formally present our parameter of interest, the squared MMD between two unknown functions, and establish asymptotic representations for this parameter based on its higher-order differentiability, which, as we formally establish, holds even when the MMD involves estimation of unknown nuisance parameters. In \secref{sec:est}, we discuss estimation of this parameter, discuss the corresponding hypothesis test and study its asymptotic behavior under the null. We study the consistency of our proposed test under fixed and local alternatives in \secref{sec:local}. We revisit our examples in \secref{sec:illustr} and provide an additional example in which we can still make progress using our techniques even though our regularity conditions fails. In \secref{sec:num}, we present results from a simulation study to illustrate the finite-sample performance of our test, and we end with concluding remark in \secref{sec:concl}.

Appendix A reviews higher-order pathwise differentiability. Appendix B gives a summary of the empirical $U$-process results from \cite{Nolan&Pollard1988} that we build upon. All proofs can be found in Appendix C.

\section{Properties of maximum mean discrepancy}\label{sec:rep}

\subsection{Definition}

For a distribution $P$ and mappings $T$ and $U$, we define
\begin{align}
\Phi^{TU}(P)\triangleq  \iint e^{-\left[T_P(o_1)-U_P(o_2)\right]^2}dP(o_1)dP(o_2) \label{eq:Phidef}
\end{align} and set $\Psi(P)\triangleq  \Phi^{RR}(P) - 2\Phi^{RS}(P) + \Phi^{SS}(P)$.  The MMD between the distributions of $R_P(O)$ and $S_P(O)$ when $O\sim P$ is given by $\sqrt{\Psi(P)}$ and is always well-defined because $\Psi(P)$ is non-negative. 
Indeed, denoting by $\psi_0$ the true parameter value $\Psi(P_0)$, Theorem 3 of \cite{Grettonetal2006} establishes that $\psi_0$ equals zero if $\mathscr{H}_0$ holds and is otherwise strictly positive. Though the study in \cite{Grettonetal2006} is restricted to two-sample problems, their proof of this result is only based upon properties of $\Psi$ and therefore holds regardless of  the sample collected. Their proof relies on the fact that two random variables $X$ and $Y$ with compact support are equal in distribution if and only if $E[f(Y)]=E[f(X)]$ for every continuous function $f$, and uses techniques from the theory of Reproducing Kernel Hilbert Spaces \citep[see, e.g.,][for a general exposition]{Berlinet&ThomasAgnan2011}. We invite interested readers to consult \cite{Grettonetal2006} -- and, in particular, Theorem 3 therein -- for additional details. The definition of the MMD we utilize is based on the univariate Gaussian kernel with unit bandwidth, which is appropriate in view of \cite{Steinwart2002}. The results we present in this paper can be generalized to the MMD based on a Gaussian kernel of arbitrary bandwidth by simply rescaling the mappings $R$ and $S$.

\subsection{First-order differentiability}

To develop a test of $\mathscr{H}_0$, we will first construct an estimator $\psi_n$ of $\psi_0$. In order to avoid restrictive model assumptions, we wish to use flexible estimation techniques in estimating $P_0$ and therefore $\psi_0$. To control the operating characteristics of our test, it will be crucial to understand how to generate a parametric-rate estimator of $\psi_0$. For this purpose, it is informative to first investigate the pathwise differentiability of $\Psi$ as a parameter from $\mathscr{M}$ to $\mathbb{R}$.

So far, we have not specified restrictions on the mappings $P\mapsto R_P$ and $P\mapsto S_P$. However, in our developments, we will require these mappings to satisfy certain regularity conditions. Specifically, we will restrict our attention to elements of the class $\mathscr{S}$ of all mappings $T$ for which there exists some measurable function $X^T$ defined on $\mathscr{O}$, e.g. $X^T(o)=X^T(w,a,y)=w$, such that
\begin{enumerate}[label=(S\arabic*)]
	\item $T_P$ is a measurable mapping with domain $\{X^T(o) : o\in\mathscr{O} \}$ and range contained in $[-b,b]$ for some $0\leq b<\infty$ independent of $P$; \label{it:Sbdd}
	\item for all submodels $dP_t/dP= 1+th$ with bounded $h$ with $Ph=0$, there exists some $\delta>0$ and a set $\mathscr{O}_1\subseteq \mathscr{O}$ with $P_0(\mathscr{O}_1)=1$ such that, for all $(o,t_1)\in\mathscr{O}_1\times (-\delta,\delta)$, $t\mapsto T_{P_t}(x^T)$ is twice differentiable at $t_1$ with uniformly bounded (in $x^T$) first and second derivatives;\label{it:Sderivsbdd}
	\item for any $P\in\mathscr{M}$ and submodel $dP_t/dP= 1+th$ for bounded $h$  with $Ph=0$, there exists a function $D_P^T : \mathcal{O}\rightarrow\mathbb{R}$ uniformly bounded (in $P$ and $o$) such that $\int D_P^T(o)dP(o| x^T)=0$ for almost all $o\in\mathscr{O}$ and
\[\left.\frac{d}{dt}T_{P_t}(x^T)\right|_{t=0}= \int D_P^T(o) h(o) dP(o|x^T)\ .\]
\label{it:Spd}\end{enumerate} \condref{it:Sbdd} ensures that $T$ is bounded and only relies on a summary measure of an observation $O$. \condref{it:Sderivsbdd} ensures that we will be able to interchange differentiation and integration when needed. \condref{it:Spd} is a conditional (and weaker) version of pathwise differentiability in that the typical inner product representation only needs to hold for the conditional distribution of $O$ given $X^T$ under $P_0$. We will verify in \secref{sec:illustr} that these conditions hold in the context of the motivating examples presented earlier.

\begin{remark} \label{rem:restrictiveconds}
As a caution to the reader, we warn that simultaneously satisfying \ref{it:Sbdd} and \ref{it:Spd} may at times be restrictive. For example, if the observed data unit is $O\triangleq (W(1),W(2),Y)$, the parameter \[T_P(o)\triangleq E_P\left[Y\mid W(1)=w(1),W(2)=w(2)\right]-E_P\left[Y\mid W(1)=w(1)\right]\] cannot generally satisfy both conditions. In Section 5, we discuss this example further and provide a means to tackle this problem using the techniques we have developed. In concluding remarks, we discuss a weakening of our conditions, notably by replacing $\mathscr{S}$ by the linear span of elements in $\mathscr{S}$. Consideration of this larger class significantly complicates the form of the estimator we propose in \secref{sec:est}.\qed
\end{remark}

We are now in a position to discuss the pathwise differentiability of $\Psi$. For any elements $T,U\in\mathscr{S}$, we define 
\begin{align*}
\Gamma_P^{TU}(o_1,o_2)\triangleq& \bigg[2\left[T_P(o_1)-U_P(o_2)\right]\left[D_P^U(o_2) -D_P^T(o_1)\right] + 1 \\
&- \left\{4\left[T_P(o_1)-U_P(o_2)\right]^2-2\right\}D_P^T(o_1)D_P^U(o_2)\bigg]e^{-[T_P(o_1)-U_P(o_2)]^2}\ . 
\end{align*} and set $\Gamma_P\triangleq \Gamma_P^{RR} - \Gamma_P^{RS} - \Gamma_P^{SR} + \Gamma_P^{SS}$. Note that $\Gamma_P$ is symmetric for any $P\in\mathscr{M}$. For brevity, we will write $\Gamma_0^{TU}$ and $\Gamma_0$ to denote $\Gamma_{P_0}^{TU}$ and $\Gamma_{P_0}$, respectively. The following theorem characterizes the first-order behavior of $\Psi$ at an arbitrary $P\in\mathscr{M}$.
\begin{theorem}[First-order pathwise differentiability of $\Psi$ over $\mathscr{M}$] \label{thm:pd1}
If $R,S\in\mathscr{S}$, the parameter $\Psi:\mathscr{M}\rightarrow \mathbb{R}$ is pathwise differentiable at $P\in\mathscr{M}$ with first-order canonical gradient given by $D_1^{\Psi}(P)(o)\triangleq 2\left[\int \Gamma_P(o,o_2)dP(o_2)-\Psi(P)\right]$.
\end{theorem}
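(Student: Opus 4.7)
Since $\mathscr{M}$ is nonparametric, the tangent space at any $P\in\mathscr{M}$ is $L^2_0(P)$, and so to identify the canonical gradient of $\Psi$ it suffices to compute $\frac{d}{dt}\Psi(P_t)\big|_{t=0}$ along an arbitrary submodel with $dP_t/dP = 1 + th$ ($h$ bounded, $Ph=0$), express it as $\int h(o) D_1^\Psi(P)(o)\,dP(o)$ with a $P$-mean-zero representer, and invoke uniqueness. Writing $\Psi(P) = \Phi^{RR}(P) - \Phi^{RS}(P) - \Phi^{SR}(P) + \Phi^{SS}(P)$ (using $\Phi^{RS}=\Phi^{SR}$, by the symmetry of the Gaussian kernel), it is enough to handle $\Phi^{TU}$ for generic $T,U\in\mathscr{S}$ and then take the signed sum.

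\textbf{Differentiating $\Phi^{TU}$.} Interchanging $d/dt$ with the double integral --- justified by dominated convergence using the uniform boundedness from (S1) and the uniformly bounded first derivatives from (S2) --- the product rule yields two groups of terms. The measure terms, obtained from $dP_t/dt|_{t=0} = h\,dP$, collapse by Fubini to $\int h(o)\bigl[\int e^{-[T_P(o)-U_P(o_2)]^2}dP(o_2) + \int e^{-[T_P(o_1)-U_P(o)]^2}dP(o_1)\bigr]dP(o)$. The parameter terms, $-2\iint[T_P(o_1)-U_P(o_2)][\dot T_P(o_1)-\dot U_P(o_2)]e^{-[T_P(o_1)-U_P(o_2)]^2}dP(o_1)dP(o_2)$ with $\dot T_P$ and $\dot U_P$ supplied by (S3), are rewritten by substituting $\dot T_P(o_1) = \int D_P^T(o) h(o)\,dP(o\mid x^T(o_1))$. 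Because $[T_P(o_1)-U_P(o_2)]e^{-[T_P(o_1)-U_P(o_2)]^2}$ depends on $o_1$ only through $x^T(o_1)$, the iterated-expectation identity $\int f(x^T(o_1))\int D_P^T(o) h(o)\,dP(o\mid x^T(o_1))\,dP(o_1) = \int f(x^T(o)) D_P^T(o) h(o)\,dP(o)$ collapses the $\dot T_P$ piece (after integrating out $o_2$) into a single integral in $o$, and $\dot U_P$ is treated symmetrically. The net parameter contribution is $\int h(o)\bigl[-2 D_P^T(o)\int[T_P(o)-U_P(o_2)]e^{-[T_P(o)-U_P(o_2)]^2}dP(o_2) + 2 D_P^U(o)\int[T_P(o_1)-U_P(o)]e^{-[T_P(o_1)-U_P(o)]^2}dP(o_1)\bigr]dP(o)$.

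\textbf{Matching $\Gamma_P$ and concluding.} The integral $\int \Gamma_P^{TU}(o,o_2)\,dP(o_2)$ contains two ``extra'' pieces beyond the direct and $D_P^T(o)$-parameter terms appearing in the computation above, namely $2[T_P(o)-U_P(o_2)]D_P^U(o_2)e^{-[\,\cdot\,]^2}$ and $-\{4[T_P(o)-U_P(o_2)]^2-2\}D_P^T(o)D_P^U(o_2)e^{-[\,\cdot\,]^2}$. In each, the residual $o_2$-dependence after factoring out $D_P^U(o_2)$ enters only through $x^U(o_2)$, so the conditional-mean-zero property $\int D_P^U(o_2)\,dP(o_2\mid x^U)=0$ from (S3) annihilates both. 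Hence $\int \Gamma_P^{TU}(o,o_2)dP(o_2)$ recovers exactly the ``direct-at-$o_1=o$ plus $D_P^T$-parameter'' half of the derivative; by symmetry $\int \Gamma_P^{TU}(o_1,o)dP(o_1)$ supplies the other half. Summing over $(T,U)\in\{(R,R),(R,S),(S,R),(S,S)\}$ with signs $(+,-,-,+)$ and invoking the symmetry of $\Gamma_P$ gives $\frac{d}{dt}\Psi(P_t)|_{t=0} = 2\int h(o)\int\Gamma_P(o,o_2)dP(o_2)dP(o)$. Two further applications of the same cancellation yield $\iint \Gamma_P(o_1,o_2)dP(o_1)dP(o_2)=\Psi(P)$, so subtracting the constant $2\Psi(P)$ (permissible since $Ph=0$) produces the $P$-mean-zero, bounded representer $D_1^\Psi(P)(o)=2[\int \Gamma_P(o,o_2)dP(o_2)-\Psi(P)]$, which is therefore the canonical gradient. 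The main hurdle is this matching step: $\Gamma_P^{TU}$ has been packaged with quadratic $D_P^T D_P^U$ terms that do not arise in first-order differentiation but integrate away against a single $dP$, yielding the compact closed form of the theorem (and presumably setting up the second-order analysis to follow). Apart from that observation, the argument is routine given the uniform bounds in (S1)--(S2).
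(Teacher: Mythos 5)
Your proof is correct and takes essentially the same route as the paper: differentiate $\Phi^{TU}$ under the integral sign, use Fubini together with the conditional representation of $\dot T_P$, $\dot U_P$ from (S3), and then recognize the resulting representer in the form $2[\int\Gamma_P(o,o_2)\,dP(o_2)-\Psi(P)]$ by noting that the terms in $\Gamma_P^{TU}$ carrying a free $D_P^U(o_2)$ factor integrate away under $\int D_P^U(\cdot)\,dP(\cdot\mid x^U)=0$. The paper organizes the argument as two lemmas (computing $\dot\Phi^{TU}_0$, then packaging the gradient as $D^{TU}$) and relegates the $\Gamma_P$-matching to ``straightforward calculations,'' whereas you carry out that matching step explicitly, but the substance is identical.
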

Under some conditions, it is straightforward to construct an asymptotically linear estimator of $\psi_0$ with influence function $D_1^\Psi(P_0)$, that is, an estimator $\psi_n$ of $\psi_0$ such that
\begin{align*}
\psi_n-\psi_0&= \frac{1}{n}\sum_{i=1}^n D_{1}^{\Psi}(P_0)(O_i) + o_{P_0}(n^{-1/2})\ .
\end{align*}
For example, the one-step Newton-Raphson bias correction procedure \citep[see, e.g.,][]{Pfanzagl1982} or targeted minimum loss-based estimation \citep[see, e.g.,][]{vanderLaan&Rose11} can be used for this purpose.  If the above representation holds and the variance of $D_1^{\Psi}(P_0)(O)$ is positive, then $\sqrt{n}\left(\psi_n-\psi_0\right)\rightsquigarrow N(0,\sigma_0^2)$, where the symbol $\rightsquigarrow$ denotes convergence in distribution and we write $\sigma_0^2\triangleq P_0\left[D_1^{\Psi}(P_0)^2\right]$.
If $\sigma_0$ is strictly positive and can be consistently estimated, Wald-type confidence intervals for $\psi_0$ with appropriate asymptotic coverage can be constructed.

The situation is more challenging if $\sigma_0=0$. In this case, $\sqrt{n}\left(\psi_n-\psi_0\right)\rightarrow 0$ in probability and typical Wald-type confidence intervals will not be appropriate. Because $D_1^{\Psi}(P_0)(O)$ has mean zero under $P_0$, this happens if and only if $D_1^{\Psi}(P_0)\equiv 0$. The following lemma provides necessary and sufficient conditions under which $\sigma_0=0$.
\begin{corollary}[First-order degeneracy under $\mathscr{H}_0$] \label{cor:D1degen}
If $R,S\in\mathscr{S}$, it will be the case that $\sigma_0=0$ if and only if either \hypertarget{it:condH0}{(i)} $\mathscr{H}_0$ holds, or \hypertarget{it:conddegen}{(ii)} $R_0(O)$ and $S_0(O)$ are degenerate, i.e. almost surely constant but not necessarily equal, with $D_0^R\equiv D_0^S$.
\end{corollary}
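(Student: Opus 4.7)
The plan is to reduce the claim to a statement about the canonical gradient $D_1^{\Psi}(P_0)$ and then handle the two implications separately. Since any canonical gradient is $P_0$-mean zero, $\sigma_0 = 0$ is equivalent to $D_1^{\Psi}(P_0) \equiv 0$ $P_0$-a.s., i.e., $g(o) \triangleq \int \Gamma_0(o,o_2)\,dP_0(o_2)$ equals $\psi_0$ for $P_0$-a.e.\ $o$. All subsequent work is aimed at characterizing when this identity holds.

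For the sufficiency of (i), I would argue by minimization: $\Psi \geq 0$ on $\mathscr{M}$ with $\Psi(P_0) = 0$ under $\mathscr{H}_0$, so $P_0$ is a global minimizer of $\Psi$; the pathwise derivative $\frac{d}{dt}\Psi(P_t)|_{t=0} = \int D_1^{\Psi}(P_0)\,h\,dP_0$ provided by Theorem~\ref{thm:pd1} therefore vanishes along every submodel with bounded mean-zero score $h$, and varying $h$ forces $D_1^{\Psi}(P_0) \equiv 0$. For the sufficiency of (ii), I would substitute $R_0 \equiv r_0$, $S_0 \equiv s_0$, and $D_0^R \equiv D_0^S \equiv D$ directly into $\Gamma_0^{TU}$ and exploit two cancellations: the terms linear in $T_0(o_1)-U_0(o_2)$ cancel in the combination $\Gamma_0 = \Gamma_0^{RR} - \Gamma_0^{RS} - \Gamma_0^{SR} + \Gamma_0^{SS}$ because the $RS$ and $SR$ cross pieces carry opposite signs of $r_0 - s_0$, and every remaining factor of $D(o_2)$ integrates to zero against $P_0$ by \ref{it:Spd} combined with the tower property. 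What remains is the constant $2 - 2 e^{-(r_0-s_0)^2}$, which coincides with $\psi_0 = \Phi^{RR}(P_0) - 2\Phi^{RS}(P_0) + \Phi^{SS}(P_0)$ computed directly under the degeneracy.

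The main obstacle is the necessity direction. Assuming $g \equiv \psi_0$ while $\mathscr{H}_0$ fails, I would first use \ref{it:Spd} to obtain a structural decomposition $g(o) = F(R_0(o), S_0(o)) + D_0^R(o)\,H_R(R_0(o)) + D_0^S(o)\,H_S(S_0(o))$: any factor of $D_0^U(o_2)$ multiplied by a function of $o_2$ that passes through $X^U(o_2)$ integrates to zero by the conditional-mean-zero property, so the only surviving dependence on $o$ is through $(R_0(o), S_0(o))$ and the gradient evaluations $D_0^R(o), D_0^S(o)$. Conditioning on $(X^R(O), X^S(O))$ then turns $g(O) = \psi_0$ into a linear relation in the residuals $D_0^R(O), D_0^S(O)$; taking conditional means (using the universality of the Gaussian kernel embedding invoked in \cite{Grettonetal2006} to preclude spurious coincidences among values of $F$) forces $F(R_0(O), S_0(O))$ to be $P_0$-a.s.\ constant, which in turn pins the distributions of $R_0(O)$ and $S_0(O)$ to be degenerate point masses, while examining the conditional variance (and invoking (S3) again) forces $H_R D_0^R + H_S D_0^S \equiv 0$, which reduces under the degeneracy to $D_0^R \equiv D_0^S$ $P_0$-a.s. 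The delicate step, and the principal obstacle, is excluding the cancellation between the ``direct'' kernel contribution $F$ and the ``gradient'' contributions $D_0^R H_R, D_0^S H_S$ that would otherwise allow $g$ to be constant outside of cases (i) and (ii).
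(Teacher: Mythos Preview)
Your sufficiency arguments are sound. For (i), the minimization argument is a clean alternative to the paper's direct verification that $2D^{RS}=D^{RR}+D^{SS}$ under $\mathscr{H}_0$; for (ii) you are doing essentially what the paper does.

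The necessity direction has a genuine gap precisely at the step you flag as delicate. Conditioning on $(X^R(O),X^S(O))$ does not eliminate the gradient contributions: condition \ref{it:Spd} gives only $E[D_0^R(O)\mid X^R(O)]=0$ and $E[D_0^S(O)\mid X^S(O)]=0$ separately, not $E[D_0^R(O)\mid X^R(O),X^S(O)]=0$. Hence the conditional mean of $D_0^R(O)\,H_R(R_0(O))$ given the pair $(X^R,X^S)$ need not vanish, and your proposed isolation of $F$ does not go through. Without that separation, the subsequent steps (degeneracy via universality, then $D_0^R\equiv D_0^S$) cannot start.

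The paper resolves this with a different device that sidesteps conditioning entirely. It observes that your $F$-part, written as
\[
\tilde{D}_1^{\Psi}(o)\triangleq E_{P_0}\big[e^{-[R_0(O)-R_0(o)]^2}\big]+E_{P_0}\big[e^{-[S_0(O)-S_0(o)]^2}\big]-E_{P_0}\big[e^{-[R_0(O)-S_0(o)]^2}\big]-E_{P_0}\big[e^{-[R_0(o)-S_0(O)]^2}\big]-\psi_0,
\]
is itself a gradient of $\Psi$ in the submodel where $R_0$ and $S_0$ are treated as known. Because that model sits inside the locally nonparametric model, its variance satisfies $\tilde{\sigma}_0\le\sigma_0=0$, so $\tilde{D}_1^{\Psi}\equiv 0$ directly. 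A separate RKHS lemma (the paper's Lemma~A.2, essentially the universality statement you invoke) then shows that $\tilde{D}_1^{\Psi}\equiv 0$ together with $R_0(O)\noteqd S_0(O)$ forces both $R_0(O)$ and $S_0(O)$ to be almost surely constant. Substituting the constants $r_0,s_0$ back into the full $D_1^{\Psi}$ collapses it to $2(s_0-r_0)\,[D_0^R(o)-D_0^S(o)]\,e^{-(r_0-s_0)^2}$, and $D_0^R\equiv D_0^S$ follows because $r_0\neq s_0$. The submodel efficiency-bound step is the key idea your plan is missing.
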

The above results rely in part on knowledge of $D_0^R$ and $D_0^S$. It is useful to note that, in some situations, the computation of $D_P^T(o)$ for a given $T\in\mathscr{S}$ and $P\in\mathscr{M}$ can be streamlined. This is the case, for example, if $P\mapsto T_P$ is invariant to fluctuations of the marginal distribution of $X^T$, as it seems \ref{it:Spd} may suggest. Consider obtaining iid samples of increasing size from the conditional distribution of $O$ given $X^T=x^T$ under $P$, so that all individuals have observed $X^T=x^T$. Consider the fluctuation submodel $dP_t(o|x^T)\triangleq \left[1 + th(o)\right]dP(o|x^T)$ for the conditional distribution, where $h$ is uniformly bounded and $\int h(o)dP(o|x^T)=0$. Suppose that (i) $P\mapsto T_P(x^T)$ is differentiable at $t=0$ with respect to the above submodel and (ii) this derivative satisfies the inner product representation
\begin{align*}
\left.\frac{d}{dt}T_{P_t}(x^T)\right|_{t=0}&= \int \tilde{D}_P^T(o|x^T)h(o)dP(o|x^T)
\end{align*} for some uniformly bounded function $o\mapsto \tilde{D}_P^T(o|x^T)$ with $\int \tilde{D}_P^T(o|x^T)dP(o|x^T)=0$. If the above holds for all $x^T$, we may take $D_P^T(o)=\tilde{D}_P^T(o|x^T)$ for all $o$ with $X^T(o)=x^T$. If $D_P^T$ is uniformly bounded in $P$, \ref{it:Spd} then holds. 

In summary, the above discussion suggests that, if $T$ is invariant to fluctuations of the marginal distribution of $X^T$, \ref{it:Spd} can be expected to hold if there exists a regular, asymptotically linear estimator of each $T_P(x^T)$ under iid sampling from the conditional distribution of $O$ given $X^T=x^T$ implied by $P$.
 


\begin{remark}
If $T$ is invariant to fluctuations of the marginal distribution of $X^T$, one can also expect \ref{it:Spd} to hold if $P\mapsto \int T_P(X^T(o))dP(o)$ is pathwise differentiable with canonical gradient uniformly bounded in $P$ and $o$ in the model in which the marginal distribution of $X$ is known. The canonical gradient in this model is equal to $D_P^T$.\qed
\end{remark}

\subsection{Second-order differentiability and asymptotic representation}\label{sec:2ndorder}

As indicated above, if $\sigma_0=0$, the behavior of $\Psi$ around $P_0$ cannot be adequately characterized by a first-order analysis. For this reason, we must investigate whether $\Psi$ is  second-order differentiable. As we discuss below, under $\mathscr{H}_0$, $\Psi$ is indeed second-order pathwise differentiable at $P_0$ and admits a useful second-order asymptotic representation.

\begin{theorem}[Second-order pathwise differentiability under $\mathscr{H}_0$] \label{thm:pd2}
If $R,S\in\mathscr{S}$ and $\mathscr{H}_0$ holds, the parameter $\Psi:\mathscr{M}\rightarrow\mathbb{R}$ is second-order pathwise differentiable at $P_0$ with second-order canonical gradient $D_2^{\Psi}(P_0)\triangleq 2\Gamma_0$.
\end{theorem}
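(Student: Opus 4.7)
My plan is to establish the Taylor expansion of $t \mapsto \Psi(P_t)$ along a regular parametric submodel $\{P_t\}$ with $dP_t/dP_0 = 1 + th$ for bounded $h$ satisfying $P_0 h = 0$. Under $\mathscr{H}_0$, Theorem~\ref{thm:pd1} combined with Corollary~\ref{cor:D1degen} gives $\Psi(P_0) = 0$ and $D_1^{\Psi}(P_0) \equiv 0$ $P_0$-almost surely, so second-order pathwise differentiability reduces to verifying
\begin{align*}
\Psi(P_t) = t^2 \iint \Gamma_0(o_1,o_2)\, h(o_1)\, h(o_2)\, dP_0(o_1)\, dP_0(o_2) + o(t^2).
\end{align*}
The symmetry of $\Gamma_0$ is already noted, and the conditional-mean-zero requirement $\int \Gamma_0(o_1, o_2)\, dP_0(o_2) = 0$ for $P_0$-almost every $o_1$ is immediate from the formula for $D_1^\Psi(P_0)$ in Theorem~\ref{thm:pd1} together with $D_1^\Psi(P_0) \equiv 0$ from Corollary~\ref{cor:D1degen}; hence $2\Gamma_0$ is a legitimate candidate second-order canonical gradient, and the task reduces to identifying it as the $t^2$-coefficient of $\Psi(P_t)$.

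Next, I would decompose $\Psi = \Phi^{RR} - \Phi^{RS} - \Phi^{SR} + \Phi^{SS}$ (using $\Phi^{RS} = \Phi^{SR}$ by symmetry in the integration variables) and Taylor-expand each $\Phi^{TU}$ to order $t^2$. Writing $\Delta_t(o_1,o_2) = T_{P_t}(o_1) - U_{P_t}(o_2)$ and $K_t = e^{-\Delta_t^2}$, \condref{it:Sderivsbdd} justifies both differentiation under the double integral and the uniform-in-$x$ expansion $T_{P_t}(x) = T_{P_0}(x) + t\,\tau_0'(x) + \tfrac{t^2}{2}\tau_0''(x) + O(t^3)$ (with an analogous expansion for $U$). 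Collecting the $t^2$-coefficient of $\Phi^{TU}(P_t) = \iint K_t\,(1 + th(o_1))(1 + th(o_2))\, dP_0\, dP_0$ produces four types of contributions: (i) the purely bilinear piece $\iint K_0\, h(o_1) h(o_2)\, dP_0\, dP_0$; (ii) cross terms $\iint \Delta_0 K_0\,\tau_0'(o_i)\, h(o_j)\, dP_0\, dP_0$ and their $\upsilon_0'$ analogues; (iii) a quadratic-in-derivative piece $\iint (2\Delta_0^2 - 1) K_0\,(\tau_0' - \upsilon_0')^2\, dP_0\, dP_0$; and (iv) a second-derivative term $-\iint \Delta_0 K_0\,(\tau_0'' - \upsilon_0'')\, dP_0\, dP_0$. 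For the off-diagonal pieces of (ii) (with $i \neq j$) and the $\tau_0'\upsilon_0'$ cross part of (iii), \condref{it:Spd} supplies $\tau_0'(x) = \int D_0^T(\tilde o)h(\tilde o)\,dP_0(\tilde o\,|\,X^T=x)$, and Fubini combined with the disintegration identity
\begin{align*}
\int f(x^T(o)) g(o)\, dP_0(o) = \iint f(x)\, g(o)\, dP_0(o\,|\,X^T = x)\, dP_0^{X^T}(x)
\end{align*}
promotes these to genuine bilinear forms of the shape $\iint [\cdot]\, D_0^T(o_1) h(o_1)\, h(o_2)\, dP_0\, dP_0$. A direct algebraic comparison then shows that these, together with the bilinear piece (i), reproduce exactly the three additive summands of $2\Gamma_0^{TU}$.

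The main obstacle is the residual contributions that refuse to reduce to bilinear forms in $h$: the same-variable cross terms $\iint \Delta_0 K_0\,\tau_0'(o_1) h(o_1)\, dP_0\, dP_0$ of (ii), the pure $(\tau_0')^2$ and $(\upsilon_0')^2$ summands of (iii), and the entire second-derivative term (iv). After integrating out the auxiliary variable $o_j$, each such piece depends on $(R,S)$ only through an integral of the form $\int \phi(T_P(x), u)\, d\nu^U(u)$ against the marginal distribution $\nu^U$ of $U_0(O)$ under $P_0$. Under $\mathscr{H}_0$ we have $R_0(O) \eqd S_0(O)$ and hence $\nu^R = \nu^S$, so the coefficients attached to the $U = R$ and $U = S$ variants of each such residue coincide; the alternating sign pattern $(+,-,-,+)$ of $\Psi = \Phi^{RR} - \Phi^{RS} - \Phi^{SR} + \Phi^{SS}$ then annihilates every such residue in $\Psi$. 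What survives is precisely $\iint \Gamma_0(o_1, o_2) h(o_1) h(o_2)\, dP_0\, dP_0$ as required, and the $o(t^2)$ remainder is controlled using the uniform boundedness of $T_{P_t}, U_{P_t}$ and of their first two $t$-derivatives provided by \condref{it:Sbdd} and \condref{it:Sderivsbdd}.
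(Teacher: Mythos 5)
Your proposal is correct and follows essentially the same strategy as the paper's proof, which formalizes your ``residual contributions that refuse to reduce to bilinear forms'' via the functional $\Pi$ defined just before Lemma~A.4 and shows they cancel under the alternating sign pattern of $\Psi=\Phi^{RR}-\Phi^{RS}-\Phi^{SR}+\Phi^{SS}$ precisely because the marginals of $R_0(O)$ and $S_0(O)$ agree under $\mathscr{H}_0$; your shortcut of reading one-degeneracy off the formula for $D_1^\Psi(P_0)$ together with its vanishing is cleaner than the paper's direct computation below the theorem statement. One small omission to patch: Appendix~A defines second-order pathwise differentiability along two-score submodels $dP_t=(1+th_1+t^2h_2)dP_0$, so you should also confirm that the $h_2$-contribution to the $t^2$-coefficient is $o(t^2)$ -- which it is, either because it equals $\tfrac{1}{2}t^2 P_0 D_1^\Psi h_2 = 0$ under $\mathscr{H}_0$, or because the $h_2(o_j)$ terms land in exactly the univariate residual class ($\Pi(T)+\Pi(U)$ in the paper's notation) that your sign-cancellation argument annihilates.
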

It is easy to confirm that $\Gamma_0$, and thus $D_2^{\Psi}$, is one-degenerate under $\mathscr{H}_0$ in the sense that $\int \Gamma_0(o,o_2)dP_0(o_2)=\int \Gamma_0(o_1,o)dP_0(o_1)=0$ for all $o$. This is shown as follows. For any $T,U\in\mathscr{S}$, the law of total expectation conditional on $X^U$ and fact that $\int D_0^U(o)dP_0(o|x^U)=0$ yields that
\begin{align*}
\int &\Gamma_0^{TU}(o,o_2)dP_0(o_2) \\
&= \int \left\{1-2\left[T_0(o)-U_0(o_2)\right]D_0^T(o)\right\}e^{-\left[T_0(o)-U_0(o_2)\right]^2}dP_0(o_2),
\end{align*}
where we have written $\Gamma_0^{TU}$ to denote $\Gamma_{P_0}^{TU}$. Since $\int f(R_0(o))dP_0(o)=\int f(S_0(o))dP_0(o)$ for each measurable function $f$ when $S_0(O)\eqd T_0(O)$, this then implies that $\int \Gamma_0^{RS}(o,o_2)dP_0(o_2)=\int \Gamma_0^{RR}(o,o_2)dP(o_2)$ and $\int \Gamma_0^{SR}(o,o_2)dP_0(o_2)=\int \Gamma_0^{SS}(o,o_2)dP_0(o_2)$ under $\mathscr{H}_0$. Hence, it follows that $\int \Gamma_0(o,o_2)dP_0(o_2)=0$ under $\mathscr{H}_0$ for any $o$.

If second-order pathwise differentiability held in a sufficiently uniform sense over $\mathscr{M}$, we would expect \begin{align}
\Rem_P^\Psi\ \triangleq \ \Psi(P)-\Psi(P_0)-(P-P_0)D_1^\Psi(P)+\frac{1}{2}(P-P_0)^2D_2^\Psi(P) \label{rem}
\end{align} to be a third-order remainder term. However, second-order pathwise differentiability has only been established under the null, and in fact, it appears that $\Psi$ may not generally be second-order pathwise differentiable under the alternative. As such, $D_2^\Psi$ may not even be defined under the alternative. In writing \eqref{rem}, we either naively set $D_2^\Psi(P)\triangleq 2\Gamma_P$, which is not appropriately centered to be a candidate second-order gradient, or instead take  $D_2^\Psi$ to be the centered extension
\begin{align*}
(o_1,o_2)\mapsto 2\left[\Gamma_P(o_1,o_2)-\int \Gamma_P(o_1,o)dP(o)-\int \Gamma_P(o,o_2)dP(o)+P^2\Gamma_P\right].
\end{align*}
Both of these choices yield the same expression above because the product measure $(P-P_0)^2$ is self-centering. The need for an extension renders it a priori unclear whether as $P$ tends to $P_0$ the behavior of $\Rem_P^\Psi$ is similar to what is expected under more global second-order pathwise differentiability. Using the fact that $\Psi(P)=P^2\Gamma_P$, we can simplify the expression in \eqref{rem} to \begin{equation}\Rem_P^\Psi\ =\ P_0^2\Gamma_P-\psi_0\ . \label{simpleremainder}
\end{equation} As we discuss below, this remainder term can be bounded in a useful manner, which allows us to determine that it is indeed third-order. 

For all $T\in\mathscr{S}$, $P\in\mathscr{M}$ and $o\in\mathscr{O}$, we define
\begin{align*}
\Rem_P^T(o)\ \triangleq \ T_P(o)-T_0(o) + \int D_P^T(o_1)\left[dP(o_1|x^T)-dP_0(o_1|x^T)\right]
\end{align*} as the remainder from the linearization of $T$ based on the conditional gradient $D_P^T$.
Typically, $\Rem_P^T(o)$ is a second-order term. Further consideration of this term in the context of our motivating examples is described in \secref{sec:illustr}.
 Furthermore, we define
\begin{align*}
L_P^{RS}(o)\ &\triangleq \ \max\left\{|\Rem_P^R(o)|,|\Rem_P^S(o)|\right\} \\
M_P^{RS}(o)\ &\triangleq \  \max\left\{|R_P(o)-R_0(o)|,|S_P(o)-S_0(o)|\right\} .
\end{align*} For any given function $f : \mathcal{O}\rightarrow\mathbb{R}$, we denote by $\norm{f}_{p,P_0}\triangleq \left[\int |f(o)|^pdP_0(o)\right]^{1/p}$ the $L^p(P_0)$-norm and use the symbol $\lesssim$ to denote `less than or equal to up to a positive multiplicative constant'. The following theorem provides an upper bound for the remainder term of interest.

\begin{theorem}[Upper bounds on remainder term] \label{thm:remthirdorder}
For each $P\in\mathscr{M}$, the remainder term admits the following upper bounds: \begin{align*}
\mbox{Under }\mathscr{H}_0:\ \ &|\Rem_P^{\Psi}|\ \lesssim\ K_{0P}\triangleq \twonorm{L_P^{RS}} \twonorm{M_P^{RS}} + \onenorm{L_P^{RS}}^2 + \fournorm{M_P^{RS}}^4\\
\mbox{Under }\mathscr{H}_1:\ \ &|\Rem_P^{\Psi}|\ \lesssim\ K_{1P}\triangleq \onenorm{L_P^{RS}} + \twonorm{M_P^{RS}}^2\ .
\end{align*}

\end{theorem}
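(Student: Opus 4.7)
The plan is to start from the simplified remainder expression in \eqref{simpleremainder}, namely $\Rem_P^{\Psi}=P_0^2\Gamma_P-\psi_0$, and note that the same identity $\Psi(P)=P^2\Gamma_P$ applied at $P=P_0$ gives $\psi_0=P_0^2\Gamma_0$. Hence
\[
\Rem_P^{\Psi}\ =\ P_0^2(\Gamma_P-\Gamma_0)\ =\ \sum_{(T,U)}\epsilon_{TU}\,P_0^2\bigl(\Gamma_P^{TU}-\Gamma_0^{TU}\bigr),
\]
where the sum ranges over $(T,U)\in\{(R,R),(R,S),(S,R),(S,S)\}$ with signs $\epsilon_{TU}\in\{+,-,-,+\}$. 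I would analyze each $P_0^2\Gamma_P^{TU}$ separately and then combine the four pieces.

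The key algebraic step is to eliminate $D_P^T$ and $D_P^U$ from $P_0^2\Gamma_P^{TU}$ using \condref{it:Spd}. By \condref{it:Sbdd}, $T_P(o_1)$ depends on $o_1$ only through $x_1^T$, so any occurrence of $D_P^T(o_1)$ multiplied by a factor measurable in $(x_1^T,o_2)$ can be integrated conditionally using
\[
\int D_P^T(o_1)\,dP_0(o_1\mid x_1^T)\ =\ (T_P-T_0)(x_1^T)-\Rem_P^T(x_1^T),
\]
an identity that follows from $\int D_P^T\,dP(\cdot\mid x^T)=0$ and the definition of $\Rem_P^T$. Applying this (and its analogues for $D_P^U$ and for the product $D_P^T D_P^U$) to the three $D$-containing summands of $\Gamma_P^{TU}$ rewrites $P_0^2\Gamma_P^{TU}$ entirely in terms of $\alpha^T\triangleq T_P-T_0$, $\alpha^U\triangleq U_P-U_0$, $\Rem_P^T$, $\Rem_P^U$, and the Gaussian kernel $e^{-(T_P-U_P)^2}$ together with its first- and second-order partial derivatives evaluated at $(T_P(o_1),U_P(o_2))$, all uniformly bounded by \condref{it:Sbdd} and the uniform bound on $D_P^T$ in \condref{it:Spd}.

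Next I would Taylor-expand the kernel factors around $(T_0(o_1),U_0(o_2))$ to the appropriate order in $(\alpha^T,\alpha^U)$. For $\mathscr{H}_1$, a first-order expansion suffices: every resulting piece is either linear in $\Rem_P^R$ or $\Rem_P^S$ (bounded by $\onenorm{L_P^{RS}}$ after the uniform kernel bound) or at least quadratic in $(\alpha^R,\alpha^S)$ (bounded by $\twonorm{M_P^{RS}}^2$ via Cauchy--Schwarz), which gives $K_{1P}$. For $\mathscr{H}_0$, I would expand to higher order and exploit $R_0(O)\eqd S_0(O)$: for any piece of the integrand that depends on $o_2$ only through $R_0(o_2)$ or $S_0(o_2)$, the inner $dP_0(o_2)$-integral is insensitive to which of the two is used, and likewise for $o_1$. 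Summing over $(T,U)$ with signs $(+,-,-,+)$ then cancels every would-be leading-order contribution, leaving only (i) cross products $\Rem_P^T(o_1)\alpha^U(o_2)$, bounded by $\twonorm{L_P^{RS}}\twonorm{M_P^{RS}}$ via Cauchy--Schwarz; (ii) products $\Rem_P^T(o_1)\Rem_P^U(o_2)$ on $P_0\otimes P_0$, bounded by $\onenorm{L_P^{RS}}^2$ via Fubini; and (iii) the pure fourth-order Taylor remainder in $(\alpha^T,\alpha^U)$, bounded by $\fournorm{M_P^{RS}}^4$ using uniform boundedness of the fourth derivatives of the Gaussian kernel. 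Summing these contributions gives $K_{0P}$.

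The main obstacle will be the bookkeeping. The form of $\Gamma_P^{TU}$ is engineered precisely so that the conditional-zero-mean substitution produces a residual whose Taylor expansion has the right cancellation pattern; verifying in detail that each would-be lower-order surviving piece does vanish -- using either \condref{it:Spd} or $R_0(O)\eqd S_0(O)$ -- is the delicate step. In particular, one must verify that the pure-$(\alpha^T,\alpha^U)$ contributions of degrees $1$, $2$, and $3$ all cancel under $\mathscr{H}_0$ after summing over $(T,U)$, so that the appearance of $\fournorm{M_P^{RS}}^4$ in $K_{0P}$ (rather than, e.g., $\twonorm{M_P^{RS}}^2$) is indeed necessary. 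This bookkeeping can be cross-checked against the already-established one-degeneracy of $\Gamma_0$ under $\mathscr{H}_0$, which confirms that the Taylor expansion's first-order-in-$\alpha$ contribution does collapse as required.
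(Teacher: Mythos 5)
Your proposal is correct and follows essentially the same route as the paper: the paper's Lemma on $\Phi^{TU}$ performs exactly the $D_P^T\mapsto(T_P-T_0)-\Rem_P^T$ conditional substitution followed by a Taylor expansion of the Gaussian kernel, and it packages the lower-order terms that depend on $T$ and $U$ separately into a function $\zeta(P,P_0,T)+\zeta(P,P_0,U)$ whose contributions cancel under the $(+,-,-,+)$ pattern, which is the same exchangeability-under-$\eqd$ cancellation you describe; the $\mathscr{H}_1$ bound is obtained by the same substitution and a first-order expansion in a separate lemma (Lemma on $K_P$).
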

To develop a test procedure, we will require an estimator of $P_0$, which will play the role of $P$ in the above expressions. It is helpful to think of parametric model theory when interpreting the above result, with the understanding that certain smoothing methods, such as higher-order kernel smoothing, can achieve near-parametric rates in certain settings. In a parametric model, we could often expect $\norm{L_P^{RS}}_{p,P_0}$ and $\norm{M_P^{RS}}_{p,P_0}$ to be $O_{P_0}(n^{-1})$ and $O_{P_0}(n^{-1/2})$, respectively, for $p\ge 1$. Thus, the above theorem suggests that the approximation error may be $O_P(n^{-3/2})$ in a parametric model under $\mathscr{H}_0$. In some examples, it is reasonable to expect that $L_P^{RS}\equiv 0$ for a large class of distributions $P$. In such cases, the upper bound on $\Rem_P^{\Psi}$ simplifies to $\fournorm{M_P^{RS}}^4$ under $\mathscr{H}_0$, which under a parametric model is often $O_{P_0}(n^{-2})$.

\section{Proposed test: formulation and inference under the null} \label{sec:est}

\subsection{Formulation of test}

We begin by constructing an estimator of $\psi_0$ from which a test can then be devised. Using the fact that $\Psi(P)=P^2\Gamma_P$, as implied by \eqref{simpleremainder}, we note that if $\Gamma_0$ were known, the U-statistic $\mathbb{U}_n\Gamma_0$ would be a natural estimator of $\psi_0$, where $\mathbb{U}_n$ denotes the empirical measure that places equal probability mass on each of the $n(n-1)$ points $(O_i,O_j)$ with $i\neq j$. In practice, $\Gamma_0$ is unknown and must be estimated.  This leads to the estimator $\psi_n\triangleq \mathbb{U}_n\Gamma_n$, where we write $\Gamma_n\triangleq \Gamma_{\hat{P}_n}$ for some estimator $\hat{P}_n$ of $P_0$ based on the available data. Since a large value of $\psi_n$ is inconsistent with $\mathscr{H}_0$, we will reject $\mathscr{H}_0$ if and only if $\psi_n>c_n$ for some appropriately chosen cutoff $c_n$.

In the nonparametric model considered, it may be necessary, or at the very least desirable, to utilize a data-adaptive estimator $\hat{P}_n$ of $P_0$ when constructing $\Gamma_n$. Studying the large-sample properties of $\psi_n$ may then seem particularly daunting since at first glance we may be led to believe that the behavior of $\psi_n-\psi_0$ is dominated by $P_0^2\left(\Gamma_n-\Gamma_0\right)$.  However, this is not the case. As we will see, under some conditions,  $\psi_n-\psi_0$ will approximately behave like $\left(\mathbb{U}_n-P_0^2\right)\Gamma_0$. Thus, there will be no contribution of $\hat{P}_n$ to the asymptotic behavior of $\psi_n-\psi_0$. Though this result may seem counterintuitive, it arises because $\Psi(P)$ can be expressed as $P^2\Gamma_P$ with $\Gamma_P$ a second-order gradient (or rather an extension thereof) up to a proportionality constant. More concretely, this surprising finding is a direct consequence of \eqref{simpleremainder}.

As further support that $\psi_n$ is a natural test statistic, even when a data-adaptive estimator $\hat{P}_n$ of $P_0$ has been used, we note that $\psi_n$ could also have been derived using a second-order one-step Newton-Raphson construction, as described in \cite{RobinsetTchetgenvanderVaart08}. The latter is given by \[\psi_{n,NR}\triangleq \Psi(\hat{P}_n)+P_nD^\Psi_1(\hat{P}_n)+\frac{1}{2}\mathbb{U}_nD^\Psi_2(\hat{P}_n)\ ,\] where we use the centered extension of $D_2^\Psi$ as discussed in \secref{sec:2ndorder}. Here and throughout, $P_n$ denotes the empirical distribution. It is straightforward to verify that indeed $\psi_{n}=\psi_{n,NR}$.

\subsection{Inference under the null} \label{sec:typeIctrl}

\subsubsection{Asymptotic behavior}

For each $P\in\mathscr{M}$, we let $\tilde{\Gamma}_P$ be the $P_0$-centered modification of $\Gamma_P$ given by
\begin{align*}
\tilde{\Gamma}_P(o_1,o_2)\triangleq \Gamma_P(o_1,o_2) - \int \Gamma_P(o_1,o)dP_0(o)  - \int \Gamma_P(o,o_2)dP_0(o) +  P_0^2 \Gamma_P
\end{align*} and denote $\tilde{\Gamma}_{P_0}$ by $\tilde{\Gamma}_0$. While $\tilde{\Gamma}_0=\Gamma_0$ under $\mathscr{H}_0$, this is not true more generally. Below, we use $\Rem_n^{\Psi}$ and $\tilde{\Gamma}_n$ to respectively denote $\Rem_P^{\Psi}$ and $\tilde{\Gamma}_P$ evaluated at $P=\hat{P}_n$. Straightforward algebraic manipulations allows us to write 
\begin{align}
\psi_n-\psi_0\ &=\ \mathbb{U}_n\Gamma_n-\psi_0\ =\ \mathbb{U}_n\Gamma_n-P_0^2\Gamma_n+P_0^2\Gamma_n-\psi_0 \nonumber\\
&=\ \left(\Un-P_0^2\right)\Gamma_n + \Rem_n^{\Psi} \nonumber \\
&=\  \Un \Gamma_0 + 2\left(P_n-P_0\right)P_0 \Gamma_n + \Un\left(\tilde{\Gamma}_n-\Gamma_0\right) + \Rem_n^{\Psi}\ . \label{eq:nullexpansion}
\end{align}
Our objective is to show that $n\left(\psi_n-\psi_0\right)$ behaves like $n\Un \Gamma_0$ as $n$ gets large under $\mathscr{H}_0$. In view of \eqref{eq:nullexpansion}, this will be true, for example, under conditions ensuring that
\begin{enumerate}[label=C\arabic*)]
	\item $n(P_n-P_0)P_0 \Gamma_n=o_{P_0}(1)$\ \ (empirical process and consistency conditions); \label{it:1ordempproc}
	\item $n\Un \left(\tilde{\Gamma}_n-\Gamma_0\right)=o_{P_0}(1)$\ \ ($U$-process and consistency conditions); \label{it:2ordempproc}
	\item $n\Rem_n^{\Psi}=o_{P_0}(1)$\ \ (consistency and rate conditions). \label{it:remnegl}
\end{enumerate}
We have already argued that \ref{it:remnegl} is reasonable in many examples of interest, including those presented in this paper. \cite{Nolan&Pollard1987,Nolan&Pollard1988} developed a formal theory that controls terms of the type appearing in \ref{it:2ordempproc}. In Appendix B.1 we restate specific results from these authors which are useful to study \ref{it:2ordempproc}.  Finally, the following lemma gives sufficient conditions under which \ref{it:1ordempproc} holds. We first set $K_{1n}\triangleq  \onenorm{L_{\hat{P}_n}^{RS}} + \twonorm{M_{\hat{P}_n}^{RS}}^2$.

\begin{lemma}[Sufficient conditions for \ref{it:1ordempproc}] \label{lem:firstordnegl}
Suppose that  $o_1\mapsto \int \Gamma_n(o_1,o)dP_0(o)/K_{1n}$, defined to be zero if $K_{1n}=0$, belongs to a $P_0$-Donsker class \citep{vanderVaartWellner1996} with probability tending to $1$. Then, under $\mathscr{H}_0$,
\[
(P_n-P_0)P_0 \Gamma_n= O_{P_0}\left(\frac{K_{1n}}{\sqrt{n}}\right)
\] and thus \ref{it:1ordempproc} holds whenever $K_{1n}=o_{P_0}(n^{-1/2})$.
\end{lemma}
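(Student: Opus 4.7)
The plan is to exploit the fact that $K_{1n}$ is a scalar independent of the integration variable, so it can be factored out of the empirical process, after which the Donsker hypothesis delivers the rate in essentially one step. First, since $K_{1n}$ depends on the sample but not on $o_1$, I would write the algebraic identity
\[
(P_n-P_0)(P_0\Gamma_n)\ =\ K_{1n}\cdot (P_n-P_0)\bigl(P_0\Gamma_n/K_{1n}\bigr),
\]
with the convention that the ratio equals zero whenever $K_{1n}=0$. To justify this convention under $\mathscr{H}_0$, I would note that $K_{1n}=0$ forces $\twonorm{M_{\hat{P}_n}^{RS}}=0$, so $R_{\hat{P}_n}=R_0$ and $S_{\hat{P}_n}=S_0$ hold $P_0$-almost surely; combined with the one-degeneracy of $\Gamma_0$ established after Theorem~\ref{thm:pd2}, this yields $P_0\Gamma_n\equiv 0$ $P_0$-almost surely, so that both sides of the identity vanish almost surely on $\{K_{1n}=0\}$.

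Next, I would invoke the Donsker hypothesis directly. Let $\mathcal{F}$ denote a fixed $P_0$-Donsker class with $P(P_0\Gamma_n/K_{1n}\in\mathcal{F})\to 1$. By definition of Donsker, $\sqrt{n}(P_n-P_0)$ is asymptotically tight in $\ell^\infty(\mathcal{F})$, so $\sup_{g\in\mathcal{F}}|(P_n-P_0)g|=O_{P_0}(n^{-1/2})$. On the event $A_n=\{P_0\Gamma_n/K_{1n}\in\mathcal{F}\}$, the quantity $|(P_n-P_0)(P_0\Gamma_n/K_{1n})|$ is dominated by this supremum; a standard truncation argument using $P(A_n)\to 1$ then yields $(P_n-P_0)(P_0\Gamma_n/K_{1n})=O_{P_0}(n^{-1/2})$, and multiplying through by $K_{1n}$ produces the claimed bound $(P_n-P_0)(P_0\Gamma_n)=O_{P_0}(K_{1n}/\sqrt{n})$.

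For the consequence concerning \ref{it:1ordempproc}, the conclusion is immediate: when $K_{1n}=o_{P_0}(n^{-1/2})$,
\[
n(P_n-P_0)(P_0\Gamma_n)\ =\ O_{P_0}(\sqrt{n}\,K_{1n})\ =\ o_{P_0}(1).
\]
The main obstacle here is essentially bookkeeping --- namely, the edge case $K_{1n}=0$ and standard outer-probability considerations in the Donsker rate. The substantive content is the simple observation that a sample-dependent but observation-independent scalar passes through $(P_n-P_0)$; this is what allows the Donsker hypothesis to be upgraded from a plain $O_{P_0}(n^{-1/2})$ rate to the sharper $O_{P_0}(K_{1n}/\sqrt{n})$ rate that drives \ref{it:1ordempproc}.
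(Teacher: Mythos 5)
Your main line of argument is the same as the paper's: factor the sample-dependent scalar $K_{1n}$ out of the empirical process, invoke the Donsker hypothesis to get $(P_n-P_0)(P_0\Gamma_n/K_{1n})=O_{P_0}(n^{-1/2})$, and multiply back by $K_{1n}$ to obtain the stated rate. This matches the paper's proof essentially line for line.

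The one place your writeup differs is the $K_{1n}=0$ edge case, and there you have a small gap. You deduce $R_{\hat{P}_n}=R_0$ and $S_{\hat{P}_n}=S_0$ from $\twonorm{M^{RS}_{\hat P_n}}=0$ and then assert that, together with the one-degeneracy of $\Gamma_0$, this gives $P_0\Gamma_n\equiv 0$. But $\Gamma_n$ also depends on the estimated gradients $D^R_{\hat P_n}$ and $D^S_{\hat P_n}$, and equality of the estimated with the true regression functions alone does not make $\Gamma_n$ coincide with $\Gamma_0$ nor make the degeneracy calculation for $\Gamma_0$ transfer to $\Gamma_n$. What saves the argument is that $K_{1n}=0$ \emph{also} forces $\onenorm{L^{RS}_{\hat P_n}}=0$, i.e.\ $\Rem^R_{\hat P_n}\equiv\Rem^S_{\hat P_n}\equiv 0$, which together with $R_{\hat P_n}=R_0$, $S_{\hat P_n}=S_0$ implies $\int D^T_{\hat P_n}(o)\,dP_0(o\mid x^T)=0$; only then does the one-degeneracy calculation of \secref{sec:2ndorder} apply to $\Gamma_n$. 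The paper sidesteps all of this by citing the first result of Lemma~\ref{lem:Kbd}, which directly gives $|P_0\Gamma_n(o_1,\cdot)|\lesssim K_{1n}$ for $P_0$-almost every $o_1$ and therefore handles the factoring identity (including the $0/0$ convention) in one stroke. Your conclusion in the edge case is correct, but the justification as written omits the necessary use of the $L^{RS}$ component of $K_{1n}$.
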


The following theorem describes the asymptotic distribution of $n\psi_n$ under the null hypothesis whenever conditions \ref{it:1ordempproc}, \ref{it:2ordempproc} and \ref{it:remnegl} are satisfied.

\begin{theorem}[Asymptotic distribution under $\mathscr{H}_0$] \label{thm:asympt}
Suppose that \ref{it:1ordempproc}, \ref{it:2ordempproc} and \ref{it:remnegl} hold. Then, under $\mathscr{H}_0$,
\[
n\psi_n= n \Un \Gamma_0 + o_{P_0}(1)\rightsquigarrow \sum_{k=1}^\infty \lambda_k \left(Z_k^2-1\right) ,
\]
where $\{\lambda_k\}_{k=1}^{\infty}$ are the eigenvalues of the integral operator $h(o)\mapsto \int \Gamma_0(o_1,o)h(o)dP_0(o_1)$ repeated according to their multiplicity, and $\{Z_k\}_{k=1}^\infty$ is a sequence of independent standard normal random variables. Furthermore, all of these eigenvalues are nonnegative under $\mathscr{H}_0$.
\end{theorem}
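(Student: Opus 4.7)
The plan is to start from the decomposition in equation~\eqref{eq:nullexpansion} derived earlier. Under $\mathscr{H}_0$, $\psi_0 = 0$ by Theorem 3 of \cite{Grettonetal2006}, so
\begin{align*}
n\psi_n\ =\ n\Un \Gamma_0 \;+\; 2n(P_n - P_0)P_0 \Gamma_n \;+\; n\Un(\tilde{\Gamma}_n - \Gamma_0) \;+\; n\Rem_n^{\Psi}.
\end{align*}
Conditions \ref{it:1ordempproc}, \ref{it:2ordempproc} and \ref{it:remnegl} are tailored precisely so that the last three terms are $o_{P_0}(1)$, yielding $n\psi_n = n\Un\Gamma_0 + o_{P_0}(1)$. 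This reduces the problem to identifying the limit of the degenerate $U$-statistic $n\Un\Gamma_0$.

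Next, I would invoke the classical spectral limit theorem for canonical (degenerate) second-order $U$-statistics. The kernel $\Gamma_0$ is symmetric by construction, square-integrable with respect to $P_0\times P_0$ (by \ref{it:Sbdd}, both the mappings $R_0,S_0$ and the gradients $D_0^R,D_0^S$ are uniformly bounded, making $\Gamma_0$ bounded), and one-degenerate under $\mathscr{H}_0$ as verified in the discussion immediately following Theorem~\ref{thm:pd2}. The Hilbert--Schmidt integral operator $T : h \mapsto \int \Gamma_0(\cdot,o)h(o)\,dP_0(o)$ is therefore compact and self-adjoint on $L^2(P_0)$, and admits a spectral decomposition with eigenvalues $\{\lambda_k\}$ (counted with multiplicity) and orthonormal eigenfunctions $\{\phi_k\}$. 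The classical degenerate $U$-statistic CLT (e.g., Serfling, Section 5.5; or Chapter 12 of van der Vaart) gives
\begin{align*}
n\Un \Gamma_0\ \rightsquigarrow\ \sum_{k=1}^\infty \lambda_k\bigl(Z_k^2 - 1\bigr),
\end{align*}
where $\{Z_k\}$ are i.i.d.\ standard normal; the $-1$ arises from the fact that $\Un$ excludes diagonal terms.

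Finally, I would establish nonnegativity of the $\lambda_k$ under $\mathscr{H}_0$ by showing that $T$ is positive semi-definite, i.e.\ that the quadratic form $Q(h) \triangleq \iint \Gamma_0(o_1,o_2) h(o_1) h(o_2)\,dP_0(o_1)\,dP_0(o_2)$ is nonnegative on $L^2(P_0)$. The argument exploits the fact that $\Psi \ge 0$ globally on $\mathscr{M}$ and vanishes at $P_0$ under $\mathscr{H}_0$, so $P_0$ minimizes $\Psi$. Picking any bounded mean-zero $h$, the submodel $dP_t/dP_0 = 1 + th$ is valid for small $t$, and Theorem~\ref{thm:pd2} combined with the one-degeneracy of $\Gamma_0$ (which kills the first-order term) yields the expansion $\Psi(P_t) = t^2 Q(h) + o(t^2) \ge 0$ as $t \to 0$, forcing $Q(h) \ge 0$. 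A standard truncate-and-center density argument extends this from bounded mean-zero functions to all of $L^2(P_0)$, giving $\lambda_k \ge 0$ for all $k$.

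The main technical obstacle is the eigenvalue-sign step, specifically the need to justify the second-order Taylor expansion from Theorem~\ref{thm:pd2} uniformly enough along the submodel to conclude $\Psi(P_t) = t^2 Q(h) + o(t^2)$; once this is in hand, positive semi-definiteness and the density extension are routine. The reduction step and the degenerate $U$-statistic CLT are comparatively direct, as they are immediate consequences of the imposed conditions and established classical theory.
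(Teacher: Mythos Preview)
Your proposal is correct and mirrors the paper's proof almost exactly: the same decomposition via \eqref{eq:nullexpansion}, the same citation of Serfling for the degenerate $U$-statistic limit, and the same argument for eigenvalue nonnegativity via second-order pathwise differentiability combined with $\Psi\ge 0$. The ``technical obstacle'' you flag is not actually one---the expansion $\Psi(P_t)=t^2 Q(h)+o(t^2)$ is precisely the content of second-order pathwise differentiability at $P_0$ (see Appendix~A, noting $D_1^\Psi\equiv 0$ under $\mathscr{H}_0$), so no extra uniformity is required; the paper simply divides by $t^2$ and takes $\liminf_{t\to 0}$.
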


We note that by employing a sample splitting procedure -- namely, estimating $\Gamma_0$ on one portion of the sample and constructing the $U$-statistic based on the remainder of the sample -- it is possible to eliminate the $U$-process conditions required for  \ref{it:2ordempproc}. In such a case, satisfaction of \ref{it:2ordempproc} only requires convergence of $\tilde{\Gamma}_n$ to $\Gamma_0$ with respect to the $L^2(P_0^2)$-norm.

\begin{remark}In \hyperlink{ex:3}{Example 3}, sample splitting may prove particularly important when the estimator of $E_{P_0}\left(Y\mid W=w\right)$ is chosen as the minimizer of an empirical risk since in finite samples the  bias induced by using the same residuals $y-E_{\hat{P}_n}\left(Y\mid W=w\right)$ as those in the definition of $D_{\hat{P}_n}^R(o)$ may be significant. Thus, without some form of sample splitting, the finite sample performance of $\psi_n$ may be poor even under the conditions stated in Appendix B.1.\qed
\end{remark}

\subsubsection{Estimation of the test cutoff} \label{sec:estcutoff}

As indicated above, our test consists of rejecting $\mathscr{H}_0$ if and only if $\psi_n$ is larger than some cutoff $c_n$. We wish to select $c_n$ to yield a non-conservative test at level $\alpha\in(0,1)$. In view of \autoref{thm:asympt}, denoting by $q_{1-\alpha}$ the $1-\alpha$ quantile of the described limit distribution, the cutoff $c_n$ should be chosen to be $q_{1-\alpha}/n$. We thus reject $\mathscr{H}_0$ if and only if $n\psi_n>q_{1-\alpha}$. As described in the following corollary, $q_{1-\alpha}$ admits a very simple form when $S_P\equiv 0$ for all $P$.
\begin{corollary}[Asymptotic distribution under $\mathscr{H}_0$, $S$ degenerate] \label{cor:Seq0}
Suppose that \ref{it:1ordempproc}, \ref{it:2ordempproc} and \ref{it:remnegl} hold, that $S_P\equiv 0$ for all $P\in\mathscr{M}$, and that $\sigma_R^2\triangleq Var_{P_0}\left[D_0^R(O)\right]>0$. Then, under $\mathscr{H}_0$,
\[
\frac{n\psi_n}{2\sigma_R^2}\rightsquigarrow Z^2-1,
\]
where $Z$ is a standard normal random variable. It follows then that $q_{1-\alpha}=2\sigma_R^2 (z^2_{1-\alpha/2}-1)$, where $z_{1-\alpha/2}$ is the $(1-\alpha/2)$ quantile of the standard normal distribution.
\end{corollary}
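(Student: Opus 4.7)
My plan is to specialize Theorem 4 to the case $S_P \equiv 0$ by directly computing $\Gamma_0$ under $\mathscr{H}_0$, identifying the spectrum of the associated integral operator, and then reading off the quantile.

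First I would observe that, because $S_P\equiv 0$ for every $P\in\mathscr{M}$, the conditional gradient $D_P^S$ may be taken to be identically zero (since $T_{P_t}(x^T)\equiv 0$ has zero derivative). Moreover, $\mathscr{H}_0$ with $S_P\equiv 0$ is the statement that $R_0(O)\stackrel{d}{=}0$, which in turn is equivalent to $R_0\equiv 0$ almost surely. Substituting $R_0\equiv 0$ and $D_0^S\equiv 0$ (and $S_0\equiv 0$) into the definition of $\Gamma_P^{TU}$ I would compute, on a $P_0$-probability-one set,
\begin{align*}
\Gamma_0^{RR}(o_1,o_2)&=1+2D_0^R(o_1)D_0^R(o_2),\\
\Gamma_0^{RS}(o_1,o_2)&=\Gamma_0^{SR}(o_1,o_2)=\Gamma_0^{SS}(o_1,o_2)=1,
\end{align*}
so that $\Gamma_0(o_1,o_2)=2D_0^R(o_1)D_0^R(o_2)$, a rank-one symmetric kernel.

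Next I would apply Theorem 4, which under the stated conditions gives $n\psi_n\rightsquigarrow\sum_{k=1}^{\infty}\lambda_k(Z_k^2-1)$ with $\lambda_k$ the eigenvalues of the integral operator $T_0: h(o)\mapsto\int \Gamma_0(o_1,o)h(o_1)dP_0(o_1)$. With the rank-one form above,
\[
(T_0 h)(o)=2D_0^R(o)\int D_0^R(o_1)h(o_1)\,dP_0(o_1),
\]
whose range is spanned by $D_0^R$. Since $\int D_0^R\,dP_0=0$ by \ref{it:Spd} and iterated expectation, and since $\sigma_R^2>0$, one eigenpair is $(\lambda_1,h_1)=(2\sigma_R^2,D_0^R/\sigma_R)$, and all other eigenvalues vanish. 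Hence the limit collapses to a single term $2\sigma_R^2(Z^2-1)$ with $Z$ a standard normal variable, so $n\psi_n/(2\sigma_R^2)\rightsquigarrow Z^2-1$.

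Finally I would invert to obtain the critical value: the event $\{Z^2-1>c\}$ has probability $\alpha$ iff $P(|Z|>\sqrt{c+1})=\alpha$, i.e.\ $\sqrt{c+1}=z_{1-\alpha/2}$. Taking $c=q_{1-\alpha}/(2\sigma_R^2)$ then yields $q_{1-\alpha}=2\sigma_R^2(z_{1-\alpha/2}^2-1)$, completing the corollary. There is no real obstacle here: the only substantive step is the short algebraic simplification of $\Gamma_0$, and once that collapses to the outer product $2D_0^R\otimes D_0^R$ the spectral analysis and quantile inversion are immediate.
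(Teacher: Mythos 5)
Your proposal is correct and every step checks out: the reduction $\Gamma_0(o_1,o_2)=2D_0^R(o_1)D_0^R(o_2)$ under $\mathscr{H}_0$ when $S_P\equiv 0$ follows exactly from the algebra you sketch, the integral operator is indeed rank one with sole nonzero eigenvalue $2\sigma_R^2$ (since $\int D_0^R\,dP_0=0$ by \ref{it:Spd} and iterated expectation), and the quantile inversion $q_{1-\alpha}=2\sigma_R^2(z_{1-\alpha/2}^2-1)$ is routine. Your route differs from the paper's, though: you specialize \autoref{thm:asympt}'s spectral representation, reading off the single eigenvalue directly. The paper instead argues elementarily, writing $n\Un\Gamma_0/(2\sigma_R^2)$ as $\tfrac{n}{n-1}\bigl[\sigma_R^{-2}n(P_n-P_0)^2\Gamma_0/2 - \sigma_R^{-2}n^{-1}\sum_i D_0^R(O_i)^2\bigr]$ and then applying the CLT plus continuous mapping to the V-statistic piece, the weak law of large numbers to the diagonal term, and Slutsky's theorem, never touching the eigenvalue apparatus. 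Both approaches invoke \autoref{thm:asympt} for the reduction $n\psi_n=n\Un\Gamma_0+o_{P_0}(1)$; yours then leans on the theorem's limit law, which is cleaner conceptually, while the paper's is self-contained and displays explicitly how the $-1$ centering arises from the U-versus-V-statistic correction. Either is a legitimate proof.
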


The above corollary gives an expression for $q_{1-\alpha}$ that can easily be consistently estimated from the data. In particular, one can use $\hat{q}_{1-\alpha}\triangleq 2(z_{1-\alpha/2}^2-1) P_n D^R(\hat{P}_n)^2$ as an estimator of $q_{1-\alpha}$, whose consistency can be established under a Glivenko-Cantelli and consistency condition on the estimator of $D_0^R$. However, in general, such a simple expression will not exist. \cite{Grettonetal2009} proposed estimating the eigenvalues $\nu_k$ of the centered Gram matrix and then computing $\hat{\lambda}_k\triangleq \nu_k/n$. In our context, the eigenvalues $\nu_k$ are those of the $n\times n$ matrix $G\triangleq \{G_{ij}\}_{1\leq i,j\leq n}$ with entries defined as
\begin{equation}
G_{ij}\ \triangleq \ \Gamma_n(O_i,O_j) - \frac{1}{n}\sum_{k=1}^{n}\Gamma_n(O_k,O_j) - \frac{1}{n}\sum_{\ell=1}^{n}\Gamma_n(O_i,O_\ell) + \frac{1}{n^2}\sum_{k=1}^{n}\sum_{\ell=1}^{n}\Gamma_n(O_k,O_\ell)\ . \label{eq:gram}
\end{equation}
Given these $n$ eigenvalue estimates $\hat{\lambda}_{1},...,\hat{\lambda}_n$, one could then simulate from $\sum_{k=1}^n \hat{\lambda}_k (Z_k^2-1)$ to approximate $\sum_{k=1}^\infty \lambda_k (Z_k^2-1)$. While this seems to be a plausible approach, a formal study establishing regularity conditions under which this procedure is valid is beyond the scope of this paper. We note that it also does not fall within the scope of results in \cite{Grettonetal2009} since their kernel does not depend on estimated nuisance parameters. We refer the reader to \cite{Franz2006} for possible sufficient conditions under which this approach may be valid.

In practice, it suffices to give a data-dependent asymptotic upper bound on $q_{1-\alpha}$. We will refer to $\hat{q}_{1-\alpha}^{ub}$, which depends on $P_n$, as an asymptotic upper bound of $q_{1-\alpha}$ if
\begin{align}
\limsup_{n\rightarrow\infty} P_0^n\left(n\psi_n>\hat{q}_{1-\alpha}^{ub}\right)\ \le\ 1-\alpha\ .\label{eq:cutoffub}
\end{align}
If $q_{1-\alpha}$ is consistently estimated, one possible choice of $\hat{q}_{1-\alpha}^{ub}$ is this estimate of $q_{1-\alpha}$ -- the inequality above would also become an equality provided the conclusion of \autoref{thm:asympt} holds. It is easy to derive a data-dependent upper bound with this property using Chebyshev's inequality. To do so, we first note that
\begin{align*}
Var_{P_0}\left[\sum_{k=1}^\infty \lambda_k \left(Z_k^2-1\right)\right]\ =\ \sum_{k=1}^\infty \lambda_k^2 Var_{P_0}\left(Z_k^2\right)\ =\ 2\sum_{k=1}^\infty \lambda_k^2\ =\ 2P_0^2\Gamma_0^2\ ,
\end{align*}
where we have interchanged the variance operation and the limit using the $L^2$ martingale convergence theorem and the last equality holds because $\lambda_k$, $k=1,2,\ldots$, are the eigenvalues of the Hilbert-Schmidt integral operator with kernel $\tilde{\Gamma}_0$. Under mild regularity conditions, $P_0^2\Gamma_0^2$ can be consistently estimated using $\Un \Gamma_n^2$. Provided $P_0^2\Gamma_0^2>0$, we find that
\begin{align}
\left(2\mathbb{U}_n\Gamma_n^2\right)^{-1/2}n\psi_n\rightsquigarrow \left(2P_0^2 \Gamma_0^2\right)^{-1/2}\sum_{k=1}^\infty \lambda_k \left(Z_k^2-1\right)\ , \label{eq:var1}
\end{align}
where the limit variate has mean zero and unit variance. The following theorem gives a valid choice of $\hat{q}_{0.95}^{ub}$.
\begin{theorem}
Suppose that \ref{it:1ordempproc}, \ref{it:2ordempproc} and \ref{it:remnegl} hold. Then, under $\mathscr{H}_0$ and provided $\Un \Gamma_n^2\rightarrow P_0^2 \Gamma_0^2>0$ in probability, $\hat{q}_{0.95}^{ub}\triangleq 6.2\cdot \left(\Un \Gamma_n^2\right)^{1/2}>q_{0.95}$ is a valid upper bound in the sense of \eqref{eq:cutoffub}.\end{theorem}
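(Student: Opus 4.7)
The plan is to combine the standardized weak convergence in \eqref{eq:var1} with the one-sided Chebyshev (Cantelli) inequality. The limit variate there has mean zero and unit variance, so Cantelli's bound $1/(1+c^2)$ drops below $0.05$ as soon as $c > \sqrt{19}$; the constant $6.2$ in the statement is chosen so that the corresponding threshold $6.2/\sqrt{2}$ clears this Cantelli cutoff.

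Concretely, I would set $V \triangleq (2 P_0^2 \Gamma_0^2)^{-1/2} \sum_{k=1}^\infty \lambda_k (Z_k^2 - 1)$, which, by the variance computation immediately preceding \eqref{eq:var1}, has mean zero and unit variance. Under conditions C1--C3, under $\mathscr{H}_0$, and under the assumption $\Un \Gamma_n^2 \to P_0^2 \Gamma_0^2 > 0$ in probability, \eqref{eq:var1} already yields $(2 \Un \Gamma_n^2)^{-1/2} n \psi_n \rightsquigarrow V$. Because $P_0^2 \Gamma_0^2 > 0$ forces at least one $\lambda_k$ to be nonzero, $V$ is a nondegenerate weighted sum of centered $\chi_1^2$ variates, and hence has a continuous distribution with no atom at $6.2/\sqrt{2}$. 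Rewriting the rejection event as
\[ \{n \psi_n > 6.2\,(\Un \Gamma_n^2)^{1/2}\} \;=\; \{(2 \Un \Gamma_n^2)^{-1/2} n \psi_n > 6.2/\sqrt{2}\}, \]
the Portmanteau theorem applied to the half-line $(6.2/\sqrt{2},\infty)$ then gives
\[ \lim_{n \to \infty} P_0^n\!\left(n \psi_n > 6.2\,(\Un \Gamma_n^2)^{1/2}\right) \;=\; P\!\left(V > 6.2/\sqrt{2}\right). \]

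The final step is to invoke Cantelli's inequality: for any mean-zero, unit-variance random variable $V$ and any $c > 0$, one has $P(V > c) \le 1/(1 + c^2)$. Taking $c = 6.2/\sqrt{2}$ gives $c^2 = 19.22$, so the right-hand side is at most $1/20.22 < 0.05$, which is exactly the inequality required by \eqref{eq:cutoffub}. I do not expect a serious obstacle: step 1 is the weak convergence \eqref{eq:var1}, which has already been argued in the paper; step 2 is a routine application of the Portmanteau theorem, justified by continuity of the law of $V$, which in turn follows from the assumption $P_0^2 \Gamma_0^2 > 0$; and step 3 is a direct numerical check that $6.2$ is just large enough to push the Cantelli bound below the nominal level $0.05$.
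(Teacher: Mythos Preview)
Your proposal is correct and follows essentially the same route as the paper: the paper's entire proof is the one-line observation that $P(X>t)\le (1+t^2)^{-1}$ for any mean-zero, unit-variance $X$ by the one-sided Chebyshev (Cantelli) inequality, applied to the standardized limit in \eqref{eq:var1}. Your write-up simply spells out the intermediate Portmanteau step and the numerical check $6.2^2/2=19.22>19$ that the paper leaves implicit; note also that since \eqref{eq:cutoffub} only asks for a $\limsup$ bound, you do not strictly need continuity of the law of $V$---Portmanteau already gives $\limsup_n P_0^n(\cdot)\le P(V\ge 6.2/\sqrt{2})$ for the closed half-line, and Cantelli handles the right-hand side directly.
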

The proof of the result follows immediately by noting that $P(X>t)\leq (1+t^2)^{-1}$ for any random variable $X$ with mean zero and unit variance in view of the one-sided Chebyshev's inequality. This  illustrates concretely that we can obtain a consistent test that controls type I error. In practice, we recommend either using the result of \autoref{cor:Seq0} whenever possible or estimating the eigenvalues of the matrix in (\ref{eq:gram}).

We note that the condition $\sigma_R^2>0$ holds in many but not all examples of interest. Fortunately, the plausibility of this assumption can be evaluated analytically. In \secref{sec:illustr}, we show that this condition does not hold in \hyperlink{ex:4}{Example 4} and provide a way forward despite this.

\section{Asymptotic behavior under the alternative} \label{sec:local}

\subsection{Consistency under a fixed alternative} \label{sec:testconsistent}
We present two analyses of the asymptotic behavior of our test under a fixed alternative. The first relies on $\hat{P}_n$ providing a good estimate of $P_0$. Under this condition, we give an interpretable limit distribution that provides insight into the behavior of our estimator under the alternative. As we show, surprisingly, $\hat{P}_n$ need not be close to $P_0$ to obtain an asymptotically consistent test, even if the resulting estimate of $\psi_0$ is nowhere near the truth. In the second analysis, we give more general conditions under which our test will be consistent if $\mathscr{H}_1$ holds.

\subsubsection{Nuisance functions have been estimated well}
As we now establish, our test has power against all alternatives $P_0$ except for the fringe cases discussed in \autoref{cor:D1degen} with $\Gamma_0$ one-degenerate. We first note that
\[\psi_n-\psi_0\ =\ \Un \Gamma_n-\psi_0\ =\ 2(P_n-P_0) P_0 \Gamma_n+ \Un \tilde{\Gamma}_n+ \Rem_P^\Psi \ .\]
When scaled by $\sqrt{n}$, the leading term on the right-hand side follows a mean zero normal distribution under regularity conditions.  The second summand is typically $O_{P_0}(n^{-1})$ under certain conditions, for example, on the entropy of the class of plausible realizations of the random function $(o_1,o_2)\mapsto \Gamma_n(o_1,o_2)$ \citep{Nolan&Pollard1987,Nolan&Pollard1988}.  In view of the second statement in \autoref{thm:remthirdorder}, the third summand is a second-order term that will often be negligible, even after scaling by $\sqrt{n}$. As such, under certain regularity conditions, the leading term in the representation above determines the asymptotic behavior of $\psi_n$, as described in the following theorem.
\begin{theorem}[Asymptotic distribution under $\mathscr{H}_1$] \label{thm:altasdist}
Suppose that $K_{1n}=o_{P_0}(n^{-1/2})$, that $\Un \tilde{\Gamma}_n=o_{P_0}(n^{-1/2})$, and furthermore, that $o\mapsto \int \Gamma_n(o_1,o)dP_0(o)$ belongs to a fixed $P_0$-Donsker class with probability tending to $1$ while $\twonorm{P_0\left(\Gamma_n-\Gamma_0\right)}=o_{P_0}(1)$. If $\mathscr{H}_1$ holds, we have that $
\sqrt{n}\left(\psi_n-\psi_0\right)\rightsquigarrow N\left(0,\tau^2\right)$, where $\tau^2\triangleq 4Var_{P_0}\left[\int \Gamma_0(O,o)dP_0(o)\right]$.
\end{theorem}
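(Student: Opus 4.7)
The plan is to leverage the decomposition
\[
\psi_n - \psi_0 = 2(P_n - P_0) P_0 \Gamma_n + \Un \tilde{\Gamma}_n + \Rem_n^\Psi
\]
stated immediately before the theorem and to show that, after multiplication by $\sqrt{n}$, only the first summand contributes to the limit. The middle term is assumed to be $o_{P_0}(n^{-1/2})$ outright, and the remainder is controlled by the second bound of \autoref{thm:remthirdorder}: under $\mathscr{H}_1$, $|\Rem_n^\Psi| \lesssim K_{1n} = o_{P_0}(n^{-1/2})$ by hypothesis. Both therefore vanish after scaling by $\sqrt{n}$.

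The work is in the leading term, and the natural step is to split
\[
2(P_n - P_0) P_0 \Gamma_n = 2(P_n - P_0) P_0 \Gamma_0 + 2(P_n - P_0)\bigl[P_0 \Gamma_n - P_0 \Gamma_0\bigr].
\]
For the first summand, $o \mapsto P_0 \Gamma_0(o) = \int \Gamma_0(o_1,o)\,dP_0(o_1)$ is a fixed bounded function (boundedness of $\Gamma_0$ follows from membership of $R,S$ in $\mathscr{S}$, which makes $R_P, S_P, D_P^R, D_P^S$ uniformly bounded), so the Lindeberg--L\'evy central limit theorem yields
\[
\sqrt{n}\cdot 2(P_n - P_0) P_0 \Gamma_0 \rightsquigarrow N\!\left(0,\,4\,Var_{P_0}\!\left[\int\Gamma_0(O,o)\,dP_0(o)\right]\right) = N(0,\tau^2),
\]
where the equality of the two variance representations uses symmetry of $\Gamma_0$. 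For the second summand, the hypotheses are tailored to invoke the standard empirical-process fact that if $\{f_n\}$ lies in a fixed $P_0$-Donsker class with probability tending to one and $\twonorm{f_n - f_0} = o_{P_0}(1)$ for some $f_0$ in the $L^2(P_0)$-closure of that class, then $\sqrt{n}(P_n - P_0)(f_n - f_0) = o_{P_0}(1)$ by asymptotic equicontinuity \citep{vanderVaartWellner1996}. Applying this with $f_n = P_0\Gamma_n$ and $f_0 = P_0\Gamma_0$ gives $\sqrt{n}\cdot 2(P_n - P_0)[P_0\Gamma_n - P_0\Gamma_0] = o_{P_0}(1)$. Assembling the three estimates via Slutsky's theorem delivers the claimed convergence.

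The main obstacle is essentially bookkeeping: one must check that $f_0 = P_0\Gamma_0$ can be treated as a member of the ambient Donsker class (enlarging it by its $L^2(P_0)$-closure is harmless, since this preserves the Donsker property), and that the centering inside $(P_n - P_0)$ correctly absorbs the constant $P_0[P_0\Gamma_0] = \psi_0$. A conceptually interesting feature is that, unlike the null analysis, here only the scalar rate $\Un\tilde{\Gamma}_n = o_{P_0}(n^{-1/2})$ is required in place of the stronger $U$-process control \ref{it:2ordempproc}; this reflects that $\tilde{\Gamma}_n$ need not be approaching a one-degenerate limit under the alternative, and that $\sqrt{n}(\psi_n - \psi_0)$ is governed by a first-order (rather than second-order) limiting law whenever $\psi_0 > 0$.
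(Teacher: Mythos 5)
Your proposal is correct and follows essentially the same route as the paper's own proof: the same decomposition of $\psi_n - \psi_0$ into $2(P_n-P_0)P_0\Gamma_0$, the cross-term $2(P_n-P_0)[P_0\Gamma_n - P_0\Gamma_0]$, $\Un\tilde{\Gamma}_n$, and $\Rem_n^\Psi$, with each non-leading piece controlled exactly as the paper does (hypotheses, the $\mathscr{H}_1$ bound on the remainder, and Donsker plus $L^2(P_0)$-consistency for the cross-term), and the CLT for the leading piece. Your bookkeeping remarks (closure of the Donsker class, centering) are sound but not strictly required; in particular, no symmetry argument is needed to match the stated variance, since $P_0\Gamma_0$ is literally the function $o_1 \mapsto \int \Gamma_0(o_1,o)\,dP_0(o)$ appearing in the definition of $\tau^2$.
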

In view of the results of \secref{sec:rep}, $\tau^2$ coincides with $\sigma_0^2$, the efficiency bound for regular, asymptotically linear estimators in a nonparametric model. Hence, $\psi_n$ is an asymptotically efficient estimator of $\psi_0$ under $\mathscr{H}_1$. Sufficient conditions for $\int \Gamma_n(o_1,o)dP_0(o)$ to belong to a fixed $P_0$-Donsker class with probability approaching one are given in Appendix B.2.

The following corollary is trivial in light of \autoref{thm:altasdist}. It establishes that the test $n\psi_n>\hat{q}_{1-\alpha}^{ub}$ is consistent against (essentially) all alternatives provided the needed components of the likelihood are estimated sufficiently well.
\begin{corollary}[Consistency under a fixed alternative] \label{cor:testconsistent}
Suppose the conditions of \autoref{thm:altasdist}. Furthermore, suppose that $\tau^2>0$ and $\hat{q}_{1-\alpha}^{ub}=o_{P_0}(n)$. Then, under $\mathscr{H}_1$, the test $n\psi_n>\hat{q}_{1-\alpha}^{ub}$ is consistent in the sense that
\[
\lim_{n\rightarrow\infty}P_0^n\left(n\psi_n>\hat{q}_{1-\alpha}^{ub}\right)=1\ .
\]
\end{corollary}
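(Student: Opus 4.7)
The plan is to assemble the asymptotic normality from \autoref{thm:altasdist} with the vanishing of the normalized cutoff $\hat{q}_{1-\alpha}^{ub}/n$ via a Slutsky-type argument. The mechanism is simply that under $\mathscr{H}_1$, $\psi_n$ is pinned near a strictly positive constant while the rejection threshold, rescaled by $1/n$, collapses to zero.

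First I would invoke Theorem 3 of \cite{Grettonetal2006}, recalled in \secref{sec:rep}, which guarantees that $\psi_0>0$ strictly whenever $\mathscr{H}_1$ holds. This is the only place the alternative is used, and it is what ultimately drives $n\psi_n$ to infinity. Next, from \autoref{thm:altasdist} I have $\sqrt{n}(\psi_n-\psi_0)\rightsquigarrow N(0,\tau^2)$, whence $\psi_n-\psi_0 = O_{P_0}(n^{-1/2}) = o_{P_0}(1)$ and in particular $\psi_n \to \psi_0$ in $P_0$-probability. The hypothesis $\hat{q}_{1-\alpha}^{ub} = o_{P_0}(n)$ is exactly the statement $\hat{q}_{1-\alpha}^{ub}/n = o_{P_0}(1)$.

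Finally, rewriting the rejection event as $\{n\psi_n > \hat{q}_{1-\alpha}^{ub}\} = \{\psi_n - \hat{q}_{1-\alpha}^{ub}/n > 0\}$ and applying Slutsky's lemma to the previous two displays gives
\[
\psi_n - \hat{q}_{1-\alpha}^{ub}/n \ \to \ \psi_0 \ > \ 0 \quad \text{in $P_0$-probability},
\]
so the portmanteau theorem (applied to the open half-line $(0,\infty)$) yields $P_0^n(\psi_n - \hat{q}_{1-\alpha}^{ub}/n > 0) \to 1$, which is the stated consistency.

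I do not anticipate any real obstacle: the result is essentially a direct consequence of \autoref{thm:altasdist} combined with the size assumption on the cutoff. The only subtlety worth flagging is that the positivity assumption $\tau^2>0$ is actually not used anywhere in the consistency argument itself; it is inherited from the hypotheses of \autoref{thm:altasdist} and is pertinent only to the \emph{rate} at which the Type II error vanishes. What truly powers the corollary is merely that $\psi_n = \psi_0 + o_{P_0}(1)$ with $\psi_0>0$, together with $\hat{q}_{1-\alpha}^{ub}=o_{P_0}(n)$.
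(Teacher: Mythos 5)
Your proof is correct, and it takes a genuinely more elementary route than the paper's. The paper normalizes the non-rejection event by $\sigma_0\sqrt{n}$ (using $\sigma_0=\tau>0$), splits on whether $\hat{q}_{1-\alpha}^{ub}/n\le\epsilon$ for a fixed $0<\epsilon<\psi_0$, and then invokes the uniform convergence of distribution functions at continuity points to push the Gaussian CDF argument to $-\infty$. You extract from \autoref{thm:altasdist} only the much weaker fact that $\psi_n\to\psi_0$ in probability, pair it with $\hat{q}_{1-\alpha}^{ub}/n=o_{P_0}(1)$ and $\psi_0>0$ under $\mathscr{H}_1$, and conclude that $\psi_n-\hat{q}_{1-\alpha}^{ub}/n$ converges in probability to a strictly positive constant, so the rejection probability tends to $1$. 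This is cleaner, and your flag that $\tau^2>0$ is not actually used by this argument is exactly right: the paper's proof \emph{does} need $\sigma_0>0$ in order to standardize, whereas yours does not. Two minor terminological overshoots, neither a gap: combining $\psi_n\to\psi_0$ and $\hat{q}_{1-\alpha}^{ub}/n\to 0$ in probability is simpler than Slutsky (it is just additivity of convergence in probability), and the final step needs only the definition of convergence in probability to a constant, not the portmanteau theorem.
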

The requirement that $\hat{q}_{1-\alpha}^{ub}=o_{P_0}(n)$ is very mild given that $q_{1-\alpha}$ will be finite whenever $R,S\in\mathscr{S}$. As such, we would not expect $\hat{q}_{1-\alpha}^{ub}$ to get arbitrarily large as sample size grows, at least beyond the extent allowed by our corollary. This suggests that most non-trivial upper bounds satisfying (\ref{eq:cutoffub}) will yield a consistent test.


\subsubsection{Nuisance functions have not been estimated well}
We now consider the case where the nuisance functions are not estimated well, in the sense that the consistency conditions of \autoref{thm:altasdist} do not hold. In particular, we argue that failure of these conditions does not necessarily undermine the consistency of our test. Let $\hat{q}_{1-\alpha}^{ub}$ be the estimated cutoff for our test, and suppose that $\hat{q}_{1-\alpha}^{ub}=o_{P_0}(n)$. Suppose also that $P_0^2\Gamma_n$ is asymptotically bounded away from zero in the sense that, for some $\delta>0$, $P_0^n\left(P_0^2\Gamma_n>\delta\right)$ tends to one.
This condition is reasonable given that $P_0^2 \Gamma_0>0$ if $\mathscr{H}_1$ holds and $\hat{P}_n$ is nevertheless a (possibly inconsistent) estimator of $P_0$. Assuming that $[\Un-P_0^2]\Gamma_n=O_{P_0}(n^{-1/2})$, which is true under entropy conditions on $\Gamma_n$ \citep{Nolan&Pollard1987,Nolan&Pollard1988}, we have that
\[P_0^n\left(n\psi_n > \hat{q}_{1-\alpha}^{ub}\right)\ =\ P_0^n\left(\sqrt{n}[\Un-P_0^2]\Gamma_n>\frac{\hat{q}_{1-\alpha}^{ub}}{\sqrt{n}}-\sqrt{n}P_0^2 \Gamma_n\right)\longrightarrow 1\ .
\]We have accounted for the random $n^{-1/2}\hat{q}_{1-\alpha}^{ub}$ term as in the proof of \autoref{cor:testconsistent}. Of course, this result is less satisfying than \autoref{thm:altasdist}, which provides a concrete limit distribution.

\subsection{Consistency under a local alternative} \label{sec:loccons}

We consider local alternatives of the form
\begin{align*}
dQ_n(o)&=\left[1 + n^{-1/2}h_n(o)\right] dP_0(o),
\end{align*}
 where $h_n\rightarrow h$ in $L_0^2(P_0)$ for some non-degenerate $h$ and $P_0$ satisfies the null hypothesis $\mathscr{H}_0$. Suppose that the conditions of \autoref{thm:asympt} hold. By Theorem 2.1 of \cite{Gregory1977}, we have that
\begin{align*}
n\Un \Gamma_0\ \overset{Q_n}{\rightsquigarrow}\ \sum_{k=1}^\infty \lambda_k\left[\left(Z_k+\langle f_k ,h \rangle\right)^2-1\right],
\end{align*}
where $\Un$ is the $U$-statistic empirical measure from a sample of size $n$ drawn from $Q_n$, $\langle\cdot,\cdot\rangle$ is the inner product in $L^2(P_0)$, $Z_k$ and $\lambda_k$ are as in \autoref{thm:asympt}, and $f_k$ is the eigenfunction corresponding to eigenvalue $\lambda_k$ described in \autoref{thm:asympt}. By the contiguity of $Q_n$, the conditions of \autoref{thm:asympt} yield that the result above also holds with $\Un \Gamma_0$ replaced by $\Un \Gamma_n$, our estimator applied to a sample of size $n$ drawn from $Q_n$.

If each $\lambda_k$ is non-negative, the limiting distribution under $Q_n$ stochastically dominates the asymptotic distribution under $P_0$, and furthermore, if $\langle f_k ,h \rangle\not=0$ for some $k$ with $\lambda_k>0$,  this dominance is strict. It is straightforward to show that, under the conditions of \autoref{thm:asympt}, the above holds if and only if $\liminf_{n}\sqrt{n}\Psi(Q_n)>0$, that is, if the sequence of alternatives is not too hard. Suppose that $\hat{q}_{1-\alpha}$ is a consistent estimate of $q_{1-\alpha}$. By Le Cam's third lemma, $\hat{q}_{1-\alpha}$ is consistent for $q_{1-\alpha}$ even when the estimator is computed on samples of size $n$ drawn from $Q_n$ rather than $P_0$. This proves the following theorem.
\begin{theorem}[Consistency under a local alternative]
Suppose that the conditions of \autoref{thm:asympt} hold. Then, under $\mathscr{H}_0$ and provided $\liminf_{n\rightarrow\infty}\sqrt{n}\Psi(Q_n)>0$, the proposed test is locally consistent in the sense that $\lim_{n\rightarrow\infty}Q_n\left(n\psi_n>\hat{q}_{1-\alpha}\right)>\alpha$, where $\hat{q}_{1-\alpha}$ is a consistent estimator of $q_{1-\alpha}$.
\end{theorem}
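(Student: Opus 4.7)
The plan is to chain together the asymptotic representation already established for $n\psi_n$ under $P_0$ with a contiguity argument to transfer the limit to $Q_n$, and then exploit the nonnegativity of the eigenvalues $\lambda_k$ to obtain strict stochastic dominance of the limit. First I would invoke \autoref{thm:asympt} to write $n\psi_n = n\Un \Gamma_0 + o_{P_0}(1)$. Since $h_n \to h$ in $L_0^2(P_0)$, standard arguments (Le Cam's first lemma) give mutual contiguity of $Q_n^n$ and $P_0^n$, so the $o_{P_0}(1)$ remainder transfers to an $o_{Q_n}(1)$ remainder. Then, applying Theorem 2.1 of \cite{Gregory1977} to the $P_0$-one-degenerate symmetric kernel $\Gamma_0$ (one-degeneracy was verified directly after \autoref{thm:pd2}) yields
\[
n\psi_n\ \overset{Q_n}{\rightsquigarrow}\ L_h\ \triangleq\ \sum_{k=1}^\infty \lambda_k\left[\left(Z_k+\langle f_k, h\rangle\right)^2-1\right],
\]
with $\{Z_k\}$ i.i.d.\ standard normal and $\{(\lambda_k, f_k)\}$ the eigensystem of the integral operator induced by $\Gamma_0$.

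Next I would establish stochastic dominance of $L_h$ over the null limit $L_0 \triangleq \sum_k \lambda_k(Z_k^2 - 1)$ of \autoref{thm:asympt}. Termwise, since $\lambda_k \geq 0$ under $\mathscr{H}_0$, the noncentral variate $(Z_k + \langle f_k, h\rangle)^2$ stochastically dominates the central $Z_k^2$, with strict dominance exactly when $\langle f_k, h\rangle \neq 0$. Independence across $k$ then lifts the ordering to the weighted series, with strict dominance as soon as $\langle f_k, h\rangle \neq 0$ for some $k$ with $\lambda_k > 0$. To identify this condition with $\liminf_{n}\sqrt{n}\Psi(Q_n) > 0$, I would expand $\Psi(Q_n)$ around $P_0$ using the vanishing of $D_1^\Psi(P_0)$ under $\mathscr{H}_0$ together with \autoref{thm:remthirdorder}, reducing $n\Psi(Q_n)$ to the quadratic form $(Q_n - P_0)^2 \cdot 2 \Gamma_0$ plus a remainder bounded by $K_{0 Q_n}$. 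The spectral representation then gives $n\Psi(Q_n) = \sum_k \lambda_k \langle f_k, h_n\rangle^2 + o(1)$, and since $h_n \to h$ in $L^2(P_0)$ and the operator is Hilbert-Schmidt, this sum converges to $\sum_k \lambda_k \langle f_k, h\rangle^2$. Positivity of the latter is then exactly the condition for strict dominance.

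Finally, I would handle the random cutoff. Because $\hat{q}_{1-\alpha}$ is consistent for $q_{1-\alpha}$ under $P_0^n$, Le Cam's third lemma applied to the contiguous sequence $Q_n^n$ yields $\hat{q}_{1-\alpha} \to q_{1-\alpha}$ in $Q_n$-probability as well. Combining this with the weak convergence $n\psi_n \overset{Q_n}{\rightsquigarrow} L_h$ via Slutsky's theorem and the Portmanteau theorem (using that $L_h$ has a continuous distribution on $(q_{1-\alpha} - \varepsilon, q_{1-\alpha} + \varepsilon)$) gives
\[
\lim_{n \to \infty} Q_n^n\left(n\psi_n > \hat{q}_{1-\alpha}\right)\ =\ P\left(L_h > q_{1-\alpha}\right)\ >\ P\left(L_0 > q_{1-\alpha}\right)\ =\ \alpha,
\]
where the strict inequality uses strict stochastic dominance together with the fact that $q_{1-\alpha}$ lies in the interior of the support of $L_0$. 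The main obstacle I anticipate is the passage identifying the equivalence between strict dominance and the $\liminf$ condition on $\sqrt{n}\Psi(Q_n)$: this requires carefully verifying that the remainder bound from \autoref{thm:remthirdorder} is $o(1)$ under $Q_n$ along the $n^{-1/2}$ perturbation, and that the Hilbert-Schmidt spectral expansion commutes with the $L^2$-convergence $h_n \to h$ uniformly enough to legitimize the termwise identification.
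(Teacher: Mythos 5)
Your argument follows essentially the same route as the paper's: reduce $n\psi_n$ to $n\Un\Gamma_0$ via \autoref{thm:asympt}, invoke Theorem~2.1 of \cite{Gregory1977} for the limit under $Q_n$, use contiguity to move the remainder and the consistency of $\hat q_{1-\alpha}$ across to $Q_n$ (Le Cam's third lemma), and derive a strict-dominance conclusion from nonnegativity of the eigenvalues. The one place where you add material is the verification that strict dominance is equivalent to the $\liminf$ condition, which the paper dismisses as ``straightforward.'' Your sketch there is the right idea (expand $\Psi(Q_n)$ about $P_0$, exploit $D_1^\Psi(P_0)\equiv 0$ under $\mathscr{H}_0$, use the spectral representation of $\Gamma_0$ and $h_n\to h$), but note that what you actually establish is $n\Psi(Q_n)\to\sum_k\lambda_k\langle f_k,h\rangle^2$, so positivity of that limit is a statement about $\liminf_n n\Psi(Q_n)$, not about $\liminf_n\sqrt{n}\Psi(Q_n)$ as written in the theorem (and in the paper's own discussion). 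Under the $n^{-1/2}$ perturbation, $\Psi(Q_n)=O(n^{-1})$, so $\sqrt{n}\Psi(Q_n)\to 0$ always; you should flag this scaling discrepancy rather than quietly conflate the two quantities. Beyond that, the stochastic-dominance argument and the Slutsky/Portmanteau closing are sound and match the paper's intent.
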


\section{Illustrations} \label{sec:illustr}

We now return to each of our examples. We first show that Examples 1, 2 and 3 satisfy the regularity conditions described in \secref{sec:rep}. Specifically, we show that all involved parameters $R$ and $S$ belong to $\mathscr{S}$ under reasonable conditions. Furthermore, we determine explicit remainder terms for the asymptotic representation used in each example and describe conditions under which these remainder terms are negligible. For any $T\in\mathscr{S}$, we will use the shorthand notation $\dot{T}_{\tilde{t}}(x^T)\triangleq  \left.\frac{d}{d t}T_{P_t}(x^T)\right|_{t=\tilde{t}}$ for $\tilde{t}$ in a neighborhood of zero.\\

\noindent \textbf{\hyperlink{ex:1}{Example 1}} (Continued).

The parameter $S$ with $S_P\equiv 0$ belongs to $\mathscr{S}$ trivially, with $D_P^S\equiv 0$. \condref{it:Sbdd} holds with $x^R(o)=w$. \condref{it:Sderivsbdd} holds using that $R_t(w)$ equals
\begin{align}
\sum_{a=0}^1 (-1)^{a+1}\int y\left\{\frac{1 + th_1(w,a,y) + t^2h_2(w,a,y)}{1 + tE_{P_0}[h_1(w,A,Y)] + t^2E_{P_0}[h_2(w,A,Y)]}\right\}dP_0(y| a,w)\ . \label{eq:Rtdiffblip}
\end{align}
Since we must only consider $h_1$ and $h_2$ uniformly bounded, for $t$ sufficiently small, we see that $R_t(w)$ is twice continuously differentiable with uniformly bounded derivatives. \condref{it:Spd} is satisfied by
\[
D_P^R(o)\triangleq  \frac{2a-1}{P\left(A=a\mid W=w\right)}\left\{y-E_{P}\left[Y\mid A=a,W=w\right]\right\}
\]
and $D_P^S\equiv 0$. If $\min_a P\left(A=a\mid W\right)$ is bounded away from zero with probability $1$ uniformly in $P$, it follows that $(P,o)\mapsto D_P^R(o)$ is uniformly bounded.

Clearly, we have that $\Rem_P^S\equiv 0$. We can also verify that $\Rem_P^R(o)$ equals
\begin{align*}
\sum_{\tilde{a}=0}^1 (-1)^{\tilde{a}}E_{P_0}&\Big\{\left[1-\frac{P_0\left(A=\tilde{a}\mid W\right)}{P\left(A=\tilde{a}\mid W\right)}\right] \\
&\;\;\;\;\times\left[E_P\left(Y\mid A,W\right)-E_{P_0}\left(Y\mid A,W\right)\right] \Big| A=\tilde{a},W=w\Big\}.
\end{align*}
The above remainder is double robust in the sense that it is zero if either the treatment mechanism (i.e., the probability of $A$ given $W$) or the outcome regression (i.e., the expected value of $Y$ given $A$ and $W$) is correctly specified under $P$. In a randomized trial where the treatment mechanism is known and specified correctly in $P$, we have that $\Rem_P^R\equiv 0$ and thus $L_P^{RS}\equiv 0$. More generally, an upper bound for $\Rem_P^R$ can be found using the Cauchy-Schwarz inequality to relate the rate of $\twonorm{\Rem_P^R}$ to the product of the $L^2(P_0)$-norm for the difference between each of the treatment mechanism and the outcome regression under $P$ and $P_0$.\\

\noindent \textbf{\hyperlink{ex:2}{Example 2}} (Continued).

For \ref{it:Sbdd} we take $x^R=x^S=w$. \condref{it:Sderivsbdd} can be verified using an expression similar to that in (\ref{eq:Rtdiffblip}). \condref{it:Spd} is satisfied by
\begin{align*}
D_P^R(o)\ &\triangleq \ \frac{a}{P\left(A=a\mid W=w\right)}\left[y-E_{P}\left(Y\mid A=a,W=w\right)\right] \\
D_P^S(o)\ &\triangleq \ \frac{1-a}{P\left(A=a\mid W=w\right)}\left[y-E_{P}\left(Y\mid A=a,W=w\right)\right]\ .
\end{align*}
If $\min_a P\left(A=a\mid W\right)$ is bounded away from zero with probability $1$ uniformly in $P$, both $(P,o)\mapsto D_P^R(o)$ and $(P,o)\mapsto D_P^S(o)$ are uniformly bounded.

Similarly to \hyperlink{ex:1}{Example 1}, we have that $\Rem_P^R(o)$ is equal to
\begin{align*}
E_{P_0}\left\{\left[1-\frac{P_0\left(A=1\mid W\right)}{P\left(A=1\mid W\right)}\right]\left[E_P\left(Y\mid A,W\right)-E_{P_0}\left(Y\mid A,W\right)\right]\ \middle|\ A=1,W=w\right\}.
\end{align*}
The remainder $\Rem_P^S(o)$ is equal to the above display but with $A=1$ replaced by $A=0$. The discussion about the double robust remainder term from \hyperlink{ex:1}{Example 1} applies to these remainders as well.\\

\noindent \textbf{\hyperlink{ex:3}{Example 3}} (Continued).

The parameter $S$ is the same as in \hyperlink{ex:1}{Example 1}. The parameter $R$ satisfies \ref{it:Sbdd} with $x^R(o)=w$ and \ref{it:Sderivsbdd} by an identity analogous to that used in \hyperlink{ex:1}{Example 1}. \condref{it:Spd} is satisfied by $D_P^R(o)\triangleq  y-E_P\left(Y\mid W=w\right)$. By the bounds on $Y$, $(P,o)\mapsto D_P^R(o)$ is uniformly bounded. Here, the remainder terms are both exactly zero: $\Rem_P^R\equiv \Rem_P^S\equiv 0$. Thus, we have that $L_P^{RS}\equiv 0$ in this example.\\

The requirement that $Var_{P_0}\left[D_0^R(O)\right]>0$ in \autoref{cor:Seq0}, and more generally that there exist a nonzero eigenvalue $\lambda_j$ for the limit distribution in \autoref{thm:asympt} to be non-degenerate, may at times present an obstacle to our goal of obtaining asymptotic control of the type I error. This is the case for \hyperlink{ex:4}{Example 4}, which we now discuss further. Nevertheless, we show that with a little finesse the type I error can still be controlled at the desired level for the given test. In fact, the test we discuss has type I error converging to zero, suggesting it may be noticeably conservative in small to moderate samples.  \\

\noindent \textbf{\hyperlink{ex:4}{Example 4}} (Continued).

In this example, one can take $x^R=w$ and $x^S=w(-k)$. Furthermore, it is easy to show that
\begin{align*}
D_P^R(o)&= Y-E_P[Y|W=w] \\
D_P^S(o)&= Y-E_P[Y|W(-k)=w(-k)]\ .
\end{align*}
 The first-order approximations for $R$ and $S$ are exact in this example as the remainder terms $\Rem_P^R$ and $\Rem_P^S$ are both zero. However, we note that if $E_P\left(Y\mid W\right)= E_P\left(Y\mid W(-k)\right)$ almost surely, it follows that $D_P^R\equiv D_P^S$. This implies that $\Gamma_0\equiv 0$ almost surely under $\mathscr{H}_0$. As such, under the conditions of \autoref{thm:asympt},  all of the eigenvalues in the limit distribution of $n\psi_n$ in \autoref{thm:asympt} are zero and $n\psi_n\rightarrow 0$ in probability. We are then no longer able to control the type I error at level $\alpha$, rendering our proposed test invalid.

Nevertheless, there is a simple albeit unconventional way to repair this example. Let $A$ be a Bernoulli random variable, independent of all other variables, with fixed probability of success $p\in(0,1)$. Replace $S_P$ with $o\mapsto E_P\left(Y\mid A=1,W(-k)=w(-k)\right)$ from \hyperlink{ex:2}{Example 2}, yielding then \[ D_P^S(o)=\frac{a}{p}\left[y-E_{P}\left(Y\mid A,W(-k)=w(-k)\right)\right]\ .\] It then follows that $D_0^R\not\equiv D_0^S$ and in particular $\Gamma_0$ is no longer constant. In this case, the limit distribution given in \autoref{thm:asympt} is non-degenerate. Consistent estimation of $q_{1-\alpha}$ thus yields a test that asymptotically controls type I error. Given that the proposed estimator $\psi_n$ converges to zero faster than $n^{-1}$,  the probability of rejecting the null approaches zero as sample size grows. In principle, we could have chosen any positive cutoff given that $n\psi_n\rightarrow 0$ in probability, but choosing a more principled cutoff seems judicious.

Because $p$ is known, the remainder term $\Rem_{P}^S$ is equal to zero. Furthermore, in view of the independence between $A$ and all other variables, one can estimate $E_{P_0}\left(Y\mid A=0,W(-k)\right)$ by regressing $Y$ on $W(-k)$ using all of the data without including the covariate $A$.

In future work, it may also be worth checking to see if the parameter is third-order differentiable under the null, and if so whether or not this allows us to construct an $\alpha$-level test without resorting to an artificial source of randomness.

\section{Simulation studies} \label{sec:num}

In simulation studies, we have explored the performance of our proposed test in the context of Examples 1, 2 and 3, and have also compared our method to the approach of \cite{Racineetal2006} for which software is readily available -- see, e.g., the \texttt{R} package \texttt{np} \citep{Hayfield&Racine2008}. We report the results of our simulation studies in this section.

\subsection{Simulation scenario 1}
We use an observed data structure $(W,A,Y)$, where $W\triangleq (W_1,W_2,\ldots,W_5)$ is drawn from a standard 5-dimensional normal distribution, $A$ is drawn according to a $\textnormal{Bernoulli}(0.5)$ distribution, and $Y=\mu(A,W) + 5\xi(A,W)$, where the different forms of the conditional mean function $\mu(a,w)$ are given in \tabref{tab:EYAW}, and $\xi(a,w)$ is a random variate following a Beta distribution with shape parameters $\alpha=3\expit(aw_2)$ and $\beta=2\expit[(1-a)w_1]$ shifted to have mean zero, where $\expit(x)=1/(1+\exp(-x))$.

We performed tests of the null in which $\mu(1,W)$ is equal to $\mu(0,W)$ almost surely and in distribution, as presented in Examples 1 and 2, respectively. Our estimate $\hat{P}_n$ of $P_0$ was constructed using the knowledge that $P_0\left(A=1\mid W\right)=1/2$, as would be available, for example, in the context of a randomized trial. The conditional mean function $\mu(a,w)$ was estimated using the ensemble learning algorithm Super Learner \citep{vanderLaan&Polley&Hubbard07}, as implemented in the \texttt{SuperLearner} package \citep{SuperLearner2013}. This algorithm was implemented using $10$-fold cross-validation to determine the best convex combination of regression function candidates minimizing mean-squared error using a candidate library consisting of \texttt{SL.rpart}, \texttt{SL.glm.interaction}, \texttt{SL.glm}, \texttt{SL.earth}, and \texttt{SL.nnet}. We used the results of \autoref{cor:Seq0} to evaluate significance for \hyperlink{ex:1}{Example 1}, and the eigenvalue approach presented in \secref{sec:estcutoff} to evaluate significance for \hyperlink{ex:2}{Example 2}, where we used all of the positive eigenvalues for $n=125$ and the largest $200$ positive eigenvalues for $n>125$ using the \texttt{rARPACK} package \citep{Qiu&May2014}.

We ran 1,000 Monte Carlo simulations with samples of size $125$, $250$, $500$, $1000$, and $2000$, except for the \texttt{np} package, which we only ran for $500$ Monte Carlo simulations due to its burdensome computation time. For \hyperlink{ex:1}{Example 1} we compared our approach with that of \cite{Racineetal2006} using the \texttt{npsigtest} function from the \texttt{np} package. This requires first selecting a bandwidth, which we did using the \texttt{npregbw} function, specifying that we wanted a local linear estimator and the bandwidth to be selected using the \texttt{cv.aic} method \citep{Hayfield&Racine2008}.

\begin{table}
\caption{\label{tab:EYAW}Conditional mean function in each of three simulation settings within simulation scenario 1. Here,  $m(a,w)\triangleq 0.2\left(w_1^2 + w_2 - 2w_3w_4\right)$, and the third and fourth columns indicate, respectively, whether $\mu(1,W)$ and $\mu(0,W)$ are equal in distribution or almost surely.}
\centering
\begin{tabular}{l l c c}
\hline
 & $\mu(a,w)$ & $\eqd$ & ${\buildrel {a.s.} \over =}$ \\
\hline\hline
Simulation 1a & $m(a,w)$ & $\times$ & $\times$ \\
\hline
Simulation 1b & $m(a,w) + 0.4[aw_3 + (1-a)w_4]$ & $\times$ &  \\
\hline
Simulation 1c & $m(a,w) + 0.8aw_3$ &  &  \\
\hline
\end{tabular}
\end{table}

\begin{figure}
	\centering
	\includegraphics[width=0.6\textwidth]{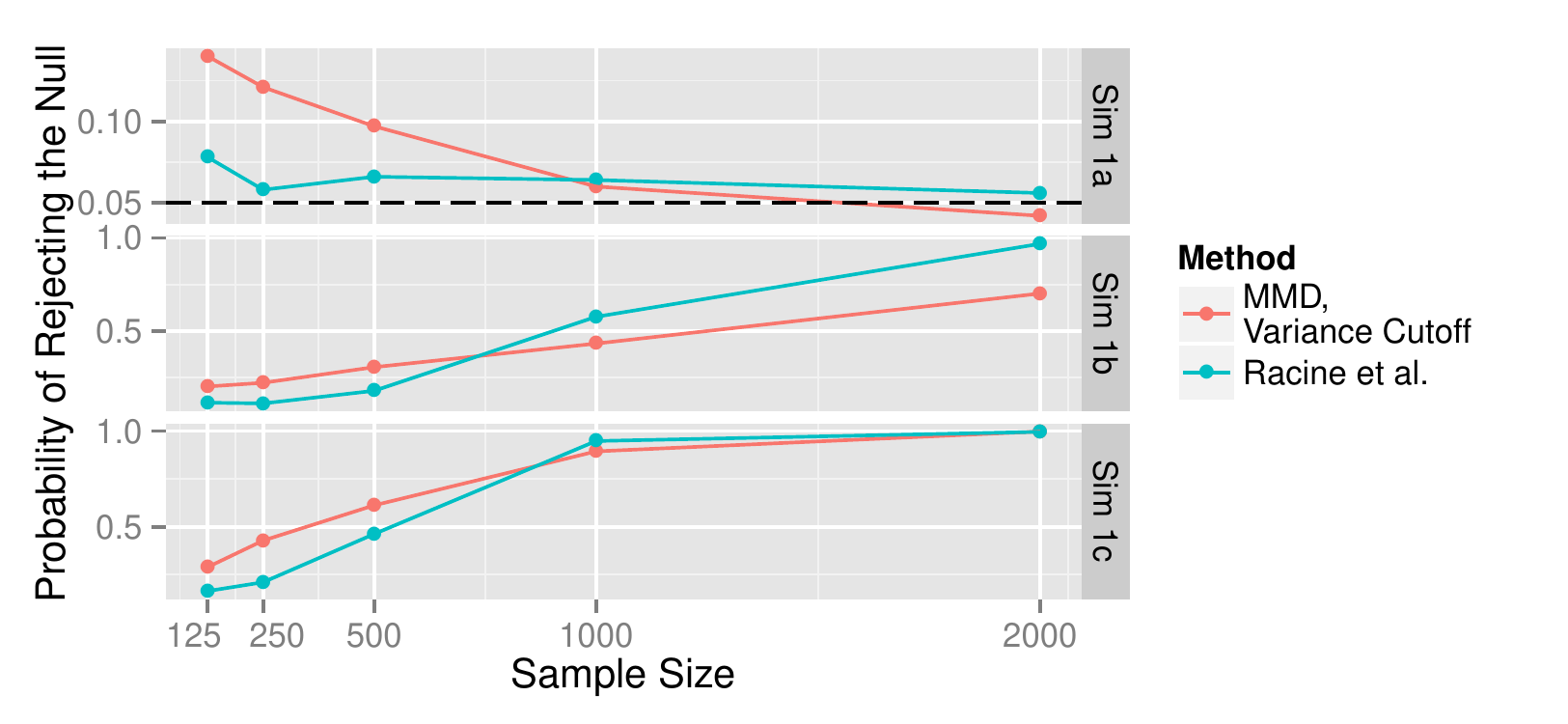}
	\caption{\label{fig:sim1prob}Empirical probability of rejecting the null when testing the null hypothesis that $\mu(1,W)-\mu(0,W)$ is almost surely equal to zero (Example 1) in Simulation 1. \tabref{tab:EYAW} indicates that the null is true in Simulation 1a, and the alternative is true in Simulations 1b and 1c.}
\end{figure}

\figref{fig:sim1prob} displays the empirical coverage of our approach as well as that resulting from use of the \texttt{np} package. At smaller sample sizes, our method does not appear to control type I error near the nominal level. This is likely because we use an asymptotic result to compute the cutoff, even when the sample size is small. Nevertheless, as sample size grows, the type I error of our test approaches the nominal level. We note that in \cite{Racineetal2006}, unlike in our proposal, the bootstrap was used to evaluate the significance of the proposed test. It will be interesting to see if applying a bootstrap procedure at smaller sample sizes improves our small-sample results. At larger sample sizes, it appears that the method of \citeauthor{Racineetal2006} slightly outperforms our approach in terms of power in simulation scenarios 1a and 1b.

In \figref{fig:sim1distr}, the empirical null rejection probability of our test is displayed for simulation scenarios 1a, 1b and 1c. In particular, we observe that our method is able to control type I error for Simulations 1a and 1b when testing the hypothesis that $\mu(1,W)$ is equal in distribution to $\mu(0,W)$. Also, the power of our test increases with sample size, as one would expect. We are not aware of any other test devised for this hypothesis.

\begin{figure}
	\centering
	\includegraphics[width=0.6\textwidth]{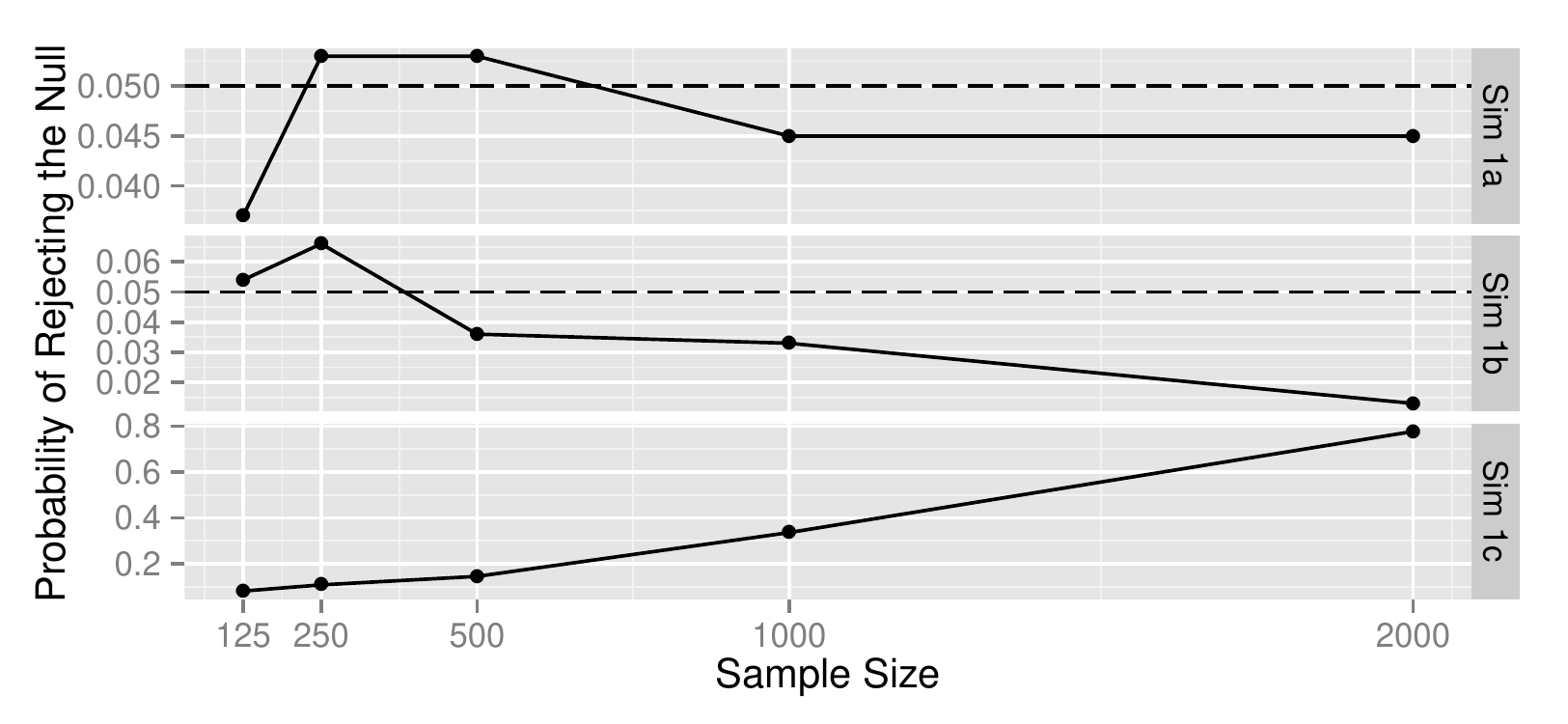}
	\caption{\label{fig:sim1distr}Empirical probability of rejecting the null when testing the null hypothesis that $\mu(1,W)$ is equal in distribution to $\mu(0,W)$ (Example 2) in Simulation 1. \tabref{tab:EYAW} indicates that the null is true in Simulations 1a and 1b, and the alternative is true in Simulation 1c.}
\end{figure}

\subsection{Simulation scenario 2: comparison with \cite{Racineetal2006}}
We reproduced a simulation study from Section 4.1 of \cite{Racineetal2006} at sample size $n=100$. In particular, we let $Y=1 +\beta A(1+W_2^2) + W_1 + W_2 + \epsilon$,
where $A$, $W_1$, and $W_2$ are drawn independently from $\textnormal{Bernoulli}(0.5)$, $\textnormal{Bernoulli}(0.5)$, and $N(0,1)$ distributions, respectively. The error term $\epsilon$ is unobserved and drawn from a $N(0,1)$ distribution independently of all observed variables. The parameter $\beta$ was varied over values $-0.5,-0.4,...,0.4,0.5$ to achieve a range of distributions. The goal is to test whether $E_{0}\left(Y\mid A,W\right)=E_{0}\left(Y\mid W\right)$ almost surely, or equivalently, that $\mu(1,W)-\mu(0,W)=0$ almost surely.

Due to computational constraints, we only ran the `Bootstrap I test' to evaluate significance of the method of \cite{Racineetal2006}. As the authors report, this method is anticonservative relative to their `Bootstrap II test' and indeed achieves lower power (but proper type I error control) in their simulations.

Except for two minor modifications, our implementation of the method in \hyperlink{ex:1}{Example 1} is similar to that as for Simulation 1. For a fair comparison with \cite{Racineetal2006}, in this simulation study, we estimated $P_0\left(A=1\mid W\right)$ rather than treating it as known. We did this using the same Super Learner library and the `\texttt{family=binomial}' setting to account for the fact that $A$ is binary. We also scaled the function $\mu(1,w)-\mu(0,w)$ by a factor of $5$ to ensure most of the probability mass of $R_0$ falls between $-1$ and $1$  (around $99\%$ when $\beta=0$) -- this is equivalent to selecting a bandwidth of $1/5$ for the Gaussian kernel in the definition of the MMD. We also considered a bandwidth of $5/2$: the results were essentially identical and therefore omitted here. We note that even with scaling the variable $Y$ is not bounded as our regularity conditions require. Nonetheless, an evaluation of our method under violations of our assumptions can itself be very informative.

\begin{figure}
	\centering
	\includegraphics[width=0.6\textwidth]{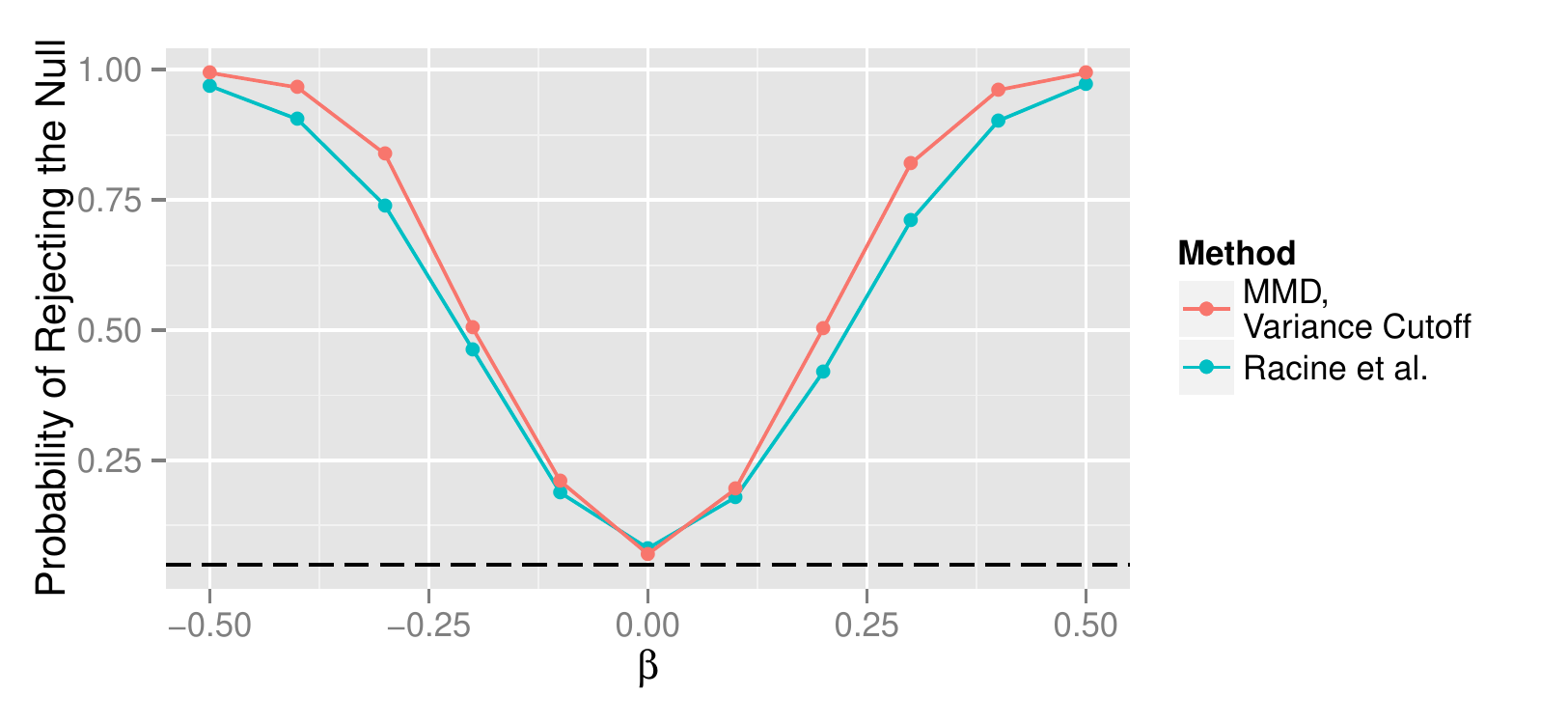}
	\caption{\label{fig:sim2}Empirical probability of rejecting the null when testing the null hypothesis that $\mu(1,W)-\mu(0,W)$ is almost surely equal to zero in Simulation 2.}
\end{figure}

\figref{fig:sim2} displays the empirical null rejection probability of our test as well as that of \cite{Racineetal2006}. In this setup, used by the authors themselves to showcase their test procedure, our method outperforms their proposal, both in terms of type I error control and power.

\subsection{Simulation scenario 3: higher dimensions}
We also explored the performance of our method as extended to tackle higher-dimensional hypotheses, as discussed in \secref{sec:concl}. To do this, we used the same distribution as for Simulation 1 but with $Y$ now a 20-dimensional random variable. Our objective here was to test $\mu(1,W)-\mu(0,W)$ is equal to $(0,0,\ldots,0)$ in probability, where $\mu(a,w)\triangleq (\mu_1(a,w),\mu_2(a,w),\ldots,\mu_{20}(a,w))$ with $\mu_j(a,w)\triangleq E_0\left(Y_j\mid A=a,W=w\right)$. Conditional on $A$ and $W$, the coordinates of $Y$ are independent. We varied the number of coordinates that represent signal and noise. For signal coordinate $j$, given $A$ and $W$, $20Y_j$ was drawn from the same conditional distribution as $Y$ give $A$ and $W$ in Simulation 1c. For noise coordinate $j$, given $A$ and $W$, $20Y_j$ was drawn from the same conditional distribution as $Y$ given $A$ and $W$ in Simulation 1a.

Relative to Simulation 1, we have scaled each coordinate of the outcome to be one twentieth the size of the outcome in Simulation 1.  Apart from the Gaussian kernel with bandwidth one, which we have adopted throughout this paper, we considered defining the MMD with a Gaussian kernel with bandwidth $1/2$. Alternatively, this could be viewed as considering bandwidths $1/20$ and $1/40$ if the outcome had not been scaled by $1/20$.

We ran the same Super Learner to estimate $\mu(1,w)$ as in Simulation 1, and we again treated the probability of treatment given covariates as known. We evaluated significance by estimating all of the positive eigenvalues of the centered Gram matrix for $n=125$ and the largest $200$ positive eigenvalues of the centered Gram matrix for $n> 125$.

\begin{figure}
	\centering
	\includegraphics[width=0.8\textwidth]{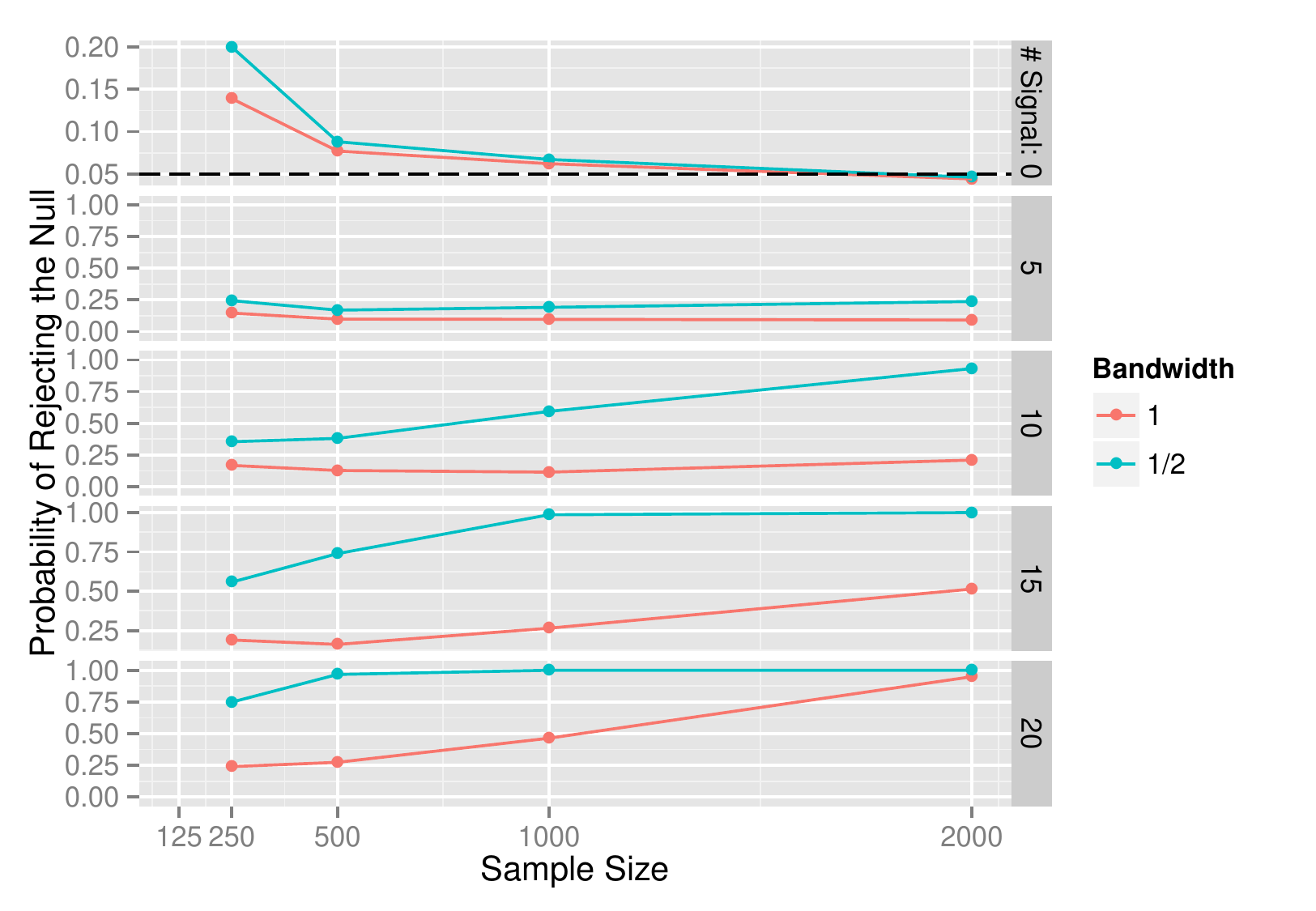}
	\caption{\label{fig:sim3}Probability of rejecting the null when testing the null hypothesis that $\mu(1,W)-\mu(0,W)$ is almost surely equal to zero in Simulation 3.}
\end{figure}

In \figref{fig:sim3}, the empirical null rejection probability is displayed for our proposed MMD method based on bandwidths 1 and 1/2. Our proposal appears to control type I error well at moderate to large sample sizes (i.e., $n\ge 500$). We did not include the results for sample size $125$ in the figure because type I error control was too poor. In particular, for zero signal coordinates, the probability of rejection was $0.24$ for bandwidth $1$ and $0.33$ for bandwidth $1/2$. For a signal of $5$, the empirical probability of rejection decreases between a sample size of $250$ and $500$, likely due to the poor type I error control at sample size $250$. Nonetheless, this simulation shows that, overall, our method indeed has increasing power as sample size grows or as the number of coordinates $j$ for which $\mu_j(1,W)-\mu_j(0,W)$ not equal to zero in probability increases. This figure also highlights that the bandwidth may be an important determinant of finite-sample power, therefore warranting further scrutiny in future work.

\section{Concluding remarks} \label{sec:concl}

We have presented a novel approach to test whether two unknown functions are equal in distribution. Our proposal explicitly allows, and indeed encourages, the use of flexible, data-adaptive techniques for estimating these unknown functions as an intermediate step. Our approach is centered upon the notion of maximum mean discrepancy, as introduced in \cite{Grettonetal2006}, since the MMD provides an elegant means of contrasting the distributions of these two unknown quantities. In their original paper, these authors showed that the MMD, which in their context tests whether two probability distributions are equal using $n$ random draws from each distribution, can be estimated using a $U$- or $V$-statistic. Under the null hypothesis, this $U$- or $V$-statistic is degenerate and converges to the true parameter value quickly. Under the alternative, it converges at the standard $n^{-1/2}$ rate. Because this parameter is a mean over a product distribution from which the data were observed, it is not surprising that a $U$- or $V$-statistic yields a good estimate of the MMD. What is surprising is that we were able to construct an estimator with these same rates even when the null hypothesis involves unknown functions that can only be estimated at slower rates. To accomplish this, we used recent developments from the higher-order pathwise differentiability literature. This appears to be the first use of these developments to address an open methodological problem. Our simulation studies indicate that our asymptotic results are meaningful in finite samples, and that in specific examples for which other methods exist, our methods generally perform at least as well as these established, tailor-made methods. Of course, the great appeal of our proposal is that it applies to a much wider class of problems.

We conclude with several possible extensions of our method that may increase further its applicability and appeal.
\begin{enumerate}
\item Although this condition is satisfied in all but one of our examples, requiring $R$ and $S$ to be in $\mathscr{S}$ can be somewhat restrictive. Nevertheless, it appears that this condition may be weakened by instead requiring membership to $\mathscr{S}^*$, the class of all parameters $T$ for which there exist some $M<\infty$ and elements $T^1,T^2,\ldots,T^M$ in $\mathscr{S}$ such that $T=\sum_{m=1}^{M}T^m$. While the results in our paper can be established in a similar manner for functions in this generalized class, the expressions for the involved gradients are quite a bit more complicated. Specifically, we find that, for $T,U\in\mathscr{S}^*$ with $T=\sum_{m=1}^{M}T^m$ and $U=\sum_{\ell=1}^{L}U^\ell$, the quantity $\Gamma_P^{TU}(o_1,o_2)$ equals
\begin{align*}
&e^{-[T_P(o_1)-U_P(o_2)]^2} \\
&+\sum_{\ell=1}^L E_{P}\left\{2\left[T_P(o_1)-U_P(O)\right]e^{-[T_P(o_1)-U_P(O)]^2}\middle|X^{U^\ell}=x_2^{U^\ell}\right\} D_P^{U^\ell}(o_2) \\
&\ \ -\sum_{m=1}^M E_{P}\left\{2\left[T_P(O)-U_P(o_2)\right]e^{-[T_P(O)-U_P(o_2)]^2}\middle|X^{T^m}=x_1^{T^m}\right\} D_P^{T^m}(o_1) \\
&\ \ -\sum_{\ell=1}^L \sum_{m=1}^M E_{P^2}\Big[\left\{4\left[T_P(O_1)-U_P(O_2)\right]^2-2\right\}e^{-[T_P(O_1)-U_P(O_2)]^2} \\
&\hspace{1in}\Big|X_1^{T^\ell}=x_1^{T^\ell},X_2^{U^m}=x_2^{U^m}\Big]D_P^{T^\ell}(o_1) D_P^{U^m}(o_2)\ .
\end{align*}
In particular, we note the need for conditional expectations with respect to $X^{R^m}$ and $X^{S^m}$ in the definition of $\Gamma$, which could render the implementation of our method more difficult. While we believe this extension is promising, its practicality remains to be investigated.

\item While our paper focuses on univariate hypotheses, our results can be generalized to higher dimensions. Suppose that $P\mapsto R_P$ and $P\mapsto S_P$ are $\mathbb{R}^d$-valued functions on $\mathscr{O}$. The class $\mathscr{S}_d$ of allowed such parameters can be defined similarly as $\mathscr{S}$, with all original conditions applying componentwise. The MMD for the vector-valued parameters $R$ and $S$ using the Gaussian kernel is given by $\Psi_d(P)\triangleq \Phi_d^{RR}(P)-2\Phi_d^{RS}(P)+\Phi_d^{SS}(P)$, where for any $T,U\in\mathscr{S}_d$ we set \[\Phi_d^{TU}(P)\triangleq \iint e^{-\|T_P(o_1)-U_P(o_2)\|^2}dP(o_1)dP(o_2)\ .\] It is not difficult to show then that, for any $T,U\in\mathscr{S}_d(P_0)$, $\Gamma_{d,P}^{TU}(o_1,o_2)$ is given by 
\begin{align*}
\Big[2&\left[T_P(o_1)-U_P(o_2)\right]'\left[D_P^U(o_2) -D_P^T(o_1)\right] + 1 \\
&\;- 2D_P^T(o_1)'\left\{2\left[T_P(o_1)-U_P(o_2)\right] \left[T_P(o_1)-U_P(o_2)\right]'-\Id\right\}D_P^U(o_2)\Big] \\
&\;\times e^{-\norm{T_P(o_1)-U_P(o_2)}^2},
\end{align*}
where $\Id$ denotes the $d$-dimensional identity matrix and $A'$ denotes the transpose of a given vector $A$. Using these objects, the method and results presented in this paper can be replicated in higher dimensions rather easily.

\item Our results can be used to develop confidence sets for infinite dimensional parameters by test inversion. Consider a parameter $T$ satisfying our conditions. Then one can test if $R_0\triangleq T_0-f$ is equal in distribution to zero for any fixed function $f$ that does not rely on $P$. Under the conditions given in this paper, a $1-\alpha$ confidence set for $T_0$ is given by all functions $f$ for which we do not reject $\mathscr{H}_0$ at level $\alpha$. The blip function from \hyperlink{ex:1}{Example 1} is a particularly interesting example, since a confidence set for this parameter can be mapped into a confidence set for the sign of the blip function, i.e. the optimal individualized treatment strategy \citep{Robins04}. We would hope that the omnibus nature of the test implies that the confidence set does not contain functions $f$ that are ``far away'' from $T_0$, contrary to a test which has no power against certain alternatives. Formalization of this claim is an area of future research.

\item To improve upon our proposal for nonparametrically testing variable importance via the conditional mean function, as discussed in \secref{sec:illustr}, it may be fruitful to consider the related Hilbert Schmidt independence criterion \citep{Grettonetal2005}. Higher-order pathwise differentiability may prove useful to estimate and infer about this discrepancy measure.

\end{enumerate}

\section*{Acknowledgement}
The authors thank Noah Simon for helpful discussions. Alex Luedtke was supported by the Department of Defense (DoD) through the National Defense Science \& Engineering Graduate Fellowship (NDSEG) Program. Marco Carone was supported by a Genentech Endowed Professorship at the University of Washington. Mark van der Laan was supported by NIH grant R01 AI074345-06.

\setcounter{equation}{0}
\renewcommand{\theequation}{A.\arabic{equation}}

\newtheorem{apptheorem}{Theorem}
\renewcommand{\theapptheorem}{A.\arabic{apptheorem}}
\newtheorem{appcorollary}{Corollary}
\renewcommand{\theappcorollary}{A.\arabic{appcorollary}}
\newtheorem{applemma}{Lemma}
\renewcommand{\theapplemma}{A.\arabic{applemma}}


\section*{Appendix A: Pathwise differentiability} \label{sec:pd}
We now review first- and second-order pathwise differentiability. Define the following fluctuation submodel through $P_0$:
\begin{align*}
dP_t(o)&\triangleq \left(1 + th_1(o) + t^2 h_2(o)\right)dP_0(o), \\
&\mbox{ where }P_0 h_j = 0\textrm{ and }\sup_{o\in\mathcal{O}}|h_j(o)|<\infty, j=1,2.
\end{align*}
The function $h_1$ is a score, and the closure of the linear span of all such scores yields the unrestricted tangent space $L_0^2(P_0)$, i.e. the set of $P_0$ mean zero functions in $L^2(P_0)$. Note that it is the resulting (first-order) tangent space that is important, as all differentiability properties discussed in this appendix are equivalent for any set of functions $h_1$ that yield the same tangent space. Hence, the restriction that $\sup_{o\in\mathcal{O}}|h_1(o)|<\infty$, while convenient, will have no impact on the resulting differentiability properties. The second-order tangent space is also determined by the first-order tangent space (see \citealp{Caroneetal2014} and the references therein).

Let $\psi_t\triangleq \Psi(P_t)$. The parameter $\Psi$ is called (first-order) pathwise differentiable at $P_0$ if there exists a $D_1^{\Psi}\in L_0^2(P_0)$ such that
\begin{align*}
\psi_t-\psi_0&= t P_0 D_1^{\Psi} h_1 + o(t).
\end{align*}
We call $D_1^{\Psi}$ the first-order canonical gradient of $\Psi$ at $P_0$, where we note that $D_1^\Psi(O)$ is almost surely unique because $\mathcal{M}$ is nonparametric. The canonical gradient $D_1^{\Psi}$ depends on $P_0$ but this is omitted in the notation because we will only discuss pathwise differentiability at $P_0$.

A function $f : \mathcal{O}^2\rightarrow\mathbb{R}$ is called ($P$) one-degenerate if it is symmetric and $P f(o,\cdot)=0$. We will use the notation $P^2 f = E_{P^2}[f(O_1,O_2)]$. The parameter $\Psi$ is called second-order pathwise differentiable at $P_0$ if there exists some symmetric, one-degenerate, $P_0^2$ square integrable function $D_2^{\Psi}$ such that
\begin{align*}
\psi_t-\psi_0=& t P_0 D_1^{\Psi} h_1 + \frac{1}{2}t^2 P_0 D_1^{\Psi}h_2 + \frac{1}{2}t^2\int\int D_2^{\Psi}(o_1,o_2)h_1(o_1)h_1(o_2)dP_0(o_2)dP_0(o_1) + o(t^2).
\end{align*}

\section*{Appendix B: Empirical Process Results} \label{app:bdUproc}
We now present two results from empirical process theory, the first of which can be used to control the $U$-processes that we deal with in the main text when $\mathscr{H}_0$ holds, and the second of which can be used to establish an empirical process condition that is used when $\mathscr{H}_1$ holds.

Before giving an overview of the empirical process theory that we use, we review the notion of a covering number. Let $\mu$ be a probability measure over $\mathcal{Z}$. For a class of functions $f : \mathcal{Z}\rightarrow \mathbb{R}$ with envelope $F$ (i.e., $|f(z)|\le F(z)$ for all $z\in\mathcal{Z}$), where $0<\norm{F}_{2,\mu}<\infty$, define the covering number $N(\epsilon,\mu,\mathcal{F},F)$ as the cardinality of the smallest subcollection $\mathcal{F}^*\subseteq\mathcal{F}$ such that, for all $f\in\mathcal{F}$, $\min_{f^*\in\mathcal{F}^*}\norm{f-f^*}_{2,\mu}\le \epsilon \norm{F}_{2,\mu}$.

\subsection*{Appendix B.1: Bounding $U$-processes}
When $\mathscr{H}_0$ holds, our proofs rely on $\Un(\tilde{\Gamma}_n-\Gamma_0)=o_{P_0}(n^{-1})$ for our method to control the type I error rate. This rate turns out to be plausible, but requires techniques which are different from the now classical empirical process techniques which can be used to establish that $(P_n-P_0)(f_n-f_0)=o_{P_0}(n^{-1/2})$ provided $P_0(f_n-f_0)^2\rightarrow 0$ in probability.

We ignore measurability concerns in this appendix with the understanding that minor modifications are needed to make these results rigorous.

We remind the reader that a function $g : \mathcal{O}^2\rightarrow\mathbb{R}$ is called one-degenerate if and only if $g$ is symmetric in its arguments and $P_0 g(o,\cdot)=0$ for all $o\in\mathcal{O}$. Let $\mathcal{G}$ denote a collection of one-degenerate functions mapping from $\mathcal{O}^2$ to $\mathbb{R}$, where $\sup_g|g(o_1,o_2)|<G(o_1,o_2)$ for all $o_1,o_2$ and some envelope function $G\in L^2(P_0)$.

Suppose we wish to estimate some $g_0\in \mathcal{G}$. We are given a sequence of estimates $\hat{g}_n\in\mathcal{G}$ that is consistent for $g_0$. Our objective is to show that
\begin{align*}
n\Un (\hat{g}_n-g_0)=o_{P_0}(1).
\end{align*}
The uniform entropy integral of $\mathcal{G}$ is given by
\begin{align}
J(t,\mathcal{G},G)&\triangleq \sup_Q \int_0^t \log N(\epsilon,Q,\mathcal{G},G) d\epsilon, \label{eq:entrint}
\end{align}
where the supremum is over all distributions $Q$ with support $\mathcal{O}^2$ and $\norm{G}_{Q,2}>0$. We note that the above definition of the entropy integral upper bounds the covering integral given by \cite{Nolan&Pollard1987}, which considers a particular choice of $Q$. The entropy integral above lacks the square root around the logarithm in the integral that is seen in the standard definition of the uniform entropy integral used to bound empirical processes \citep[see, e.g.,][]{vanderVaartWellner1996}. 

For each $g\in\mathcal{G}$, let $H_g$ represent the Hilbert-Schmidt operator on $L^2(P_0)$ given by $(H_g f)(o) = P g(o,\cdot)f(\cdot)$. Let $\{W_j : j=1,2,...\}$ be a sequence of i.i.d. standard normal random variables and $\{w_j : j=1,2,...\}$ be an orthonormal basis of $L^2(P_0)$. Let $\tilde{Q}$ be a process on $\mathcal{G}$ defined by
\begin{align*}
\tilde{Q}(g)=\sum_{j=1}^\infty \langle H_g w_j,w_j\rangle(W_j^2-1) + \sum_{i\not=j} \langle H_g w_j,w_i\rangle W_i W_j.
\end{align*}
A functional $M : \mathcal{G}\rightarrow\mathbb{R}$ is said to belong to $C(\mathcal{G},P_0^2)$ if $g\mapsto M(g)$ is uniformly continuous for the $L^2(P_0^2)$ seminorm and $\sup_{\mathcal{G}}|M(g)|<\infty$.

We have modified the statement from \cite{Nolan&Pollard1988} slightly to apply to the entropy integral given in (\ref{eq:entrint}). We omit an analogue to condition (ii) from \citeauthor{Nolan&Pollard1988}'s statement of the theorem below because it is implied by our strengthening of their condition (i).
\begin{apptheorem}[Theorem 7, \citealp{Nolan&Pollard1988}] \label{thm:nolanpollard2}
Suppose that the one-degenerate class $\mathcal{G}$ satisfies
\begin{enumerate}[label=(\roman*')]
	\item $J(1,\mathcal{G},G)<\infty$; \label{it:nolanpollardi}
	\stepcounter{enumi}
	\item $\sup_Q \log N(\epsilon,Q\times P_0,\mathcal{G},G)<\infty$ for each $\epsilon>0$, where the supremum is over distributions $Q$ with support $\mathcal{O}$. \label{it:nolanpollardiii}
\end{enumerate}
Then there is a version of $\tilde{Q}$ with continuous sample paths in $C(\mathcal{G},P_0\times P_0)$ and $n\mathbb{U}_n$ converges in distribution to $\tilde{Q}$.
\end{apptheorem}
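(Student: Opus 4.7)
The plan is to follow the standard two-step approach for weak convergence in function spaces: first establish convergence of finite-dimensional distributions, then prove asymptotic tightness (equicontinuity). This is the route Nolan and Pollard take in their paper.

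For finite-dimensional convergence, I would fix any $g_1, \ldots, g_k \in \mathcal{G}$ and exploit the spectral decomposition of the Hilbert--Schmidt operator $H_{g_j}$ on $L^2(P_0)$. Each one-degenerate, $P_0^2$-square-integrable kernel admits the expansion $g_j(o_1,o_2) = \sum_\ell \lambda_{j,\ell} \phi_{j,\ell}(o_1)\phi_{j,\ell}(o_2)$, and by re-expressing the eigenfunctions in the common orthonormal basis $\{w_j\}$ of $L^2(P_0)$, one can apply the classical Dynkin--Mandelbaum theorem for degenerate U-statistics to obtain
\[n\mathbb{U}_n g_j \ \rightsquigarrow\ \sum_{\ell} \langle H_{g_j} w_\ell, w_\ell\rangle (W_\ell^2 - 1) + \sum_{i \neq \ell} \langle H_{g_j} w_\ell, w_i\rangle W_i W_\ell.\]
Joint convergence across $j = 1, \ldots, k$ follows from applying the same argument to arbitrary linear combinations together with the Cram\'er--Wold device; note that the one-degeneracy of $\mathcal{G}$ eliminates any first-order Hoeffding contribution, so the $n^{-1}$ normalization in $n\mathbb{U}_n$ is exactly right.

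For tightness, I would show that for every $\epsilon > 0$,
\[\lim_{\delta \downarrow 0} \limsup_{n\to\infty} P^*\!\left(\sup_{\rho(g_1,g_2) < \delta} |n\mathbb{U}_n(g_1 - g_2)| > \epsilon\right) = 0,\]
where $\rho$ is the $L^2(P_0^2)$ pseudometric. Because the object being controlled is a second-order U-process rather than a linear empirical process, this uses (a) a de la Pe\~na--Montgomery-Smith style decoupling inequality to replace $n\mathbb{U}_n g$ with its decoupled analog having independent copies of the data in each argument, (b) a symmetrization/randomization step that conditions on the data and reduces the problem to a Gaussian chaos of order two, and (c) a Hanson--Wright style tail bound for quadratic forms combined with a chaining argument over the pseudometric $\rho$. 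The net effect is that in the chaining integral the entropy $\log N$ appears \emph{linearly} rather than as $\sqrt{\log N}$ (as in the empirical process setting), which is precisely why the hypothesis (i') requires $J(1, \mathcal{G}, G) < \infty$ with $\log N$ and not its square root. Condition (ii') is then used to control the approximation of functions in $\mathcal{G}$ by members of finite subcollections along the chaining skeleton, so that the limit process $\tilde{Q}$ admits a version with sample paths in $C(\mathcal{G}, P_0^2)$.

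The main obstacle is carrying out the chaining in the second step while respecting the degeneracy of the process. Standard empirical process chaining exploits sub-Gaussian concentration of linear sums, whose increments are controlled in the Orlicz norm $\psi_2$; for a Gaussian chaos of order two, the relevant Orlicz norm is only $\psi_1$, and the resulting tails force the use of a weaker chaining bound. The entropy hypothesis (i') is essentially the minimal condition under which that weaker chaining still converges. Once tightness is established, identifying the limit with the process $\tilde{Q}$ defined via the basis $\{w_j\}$ is immediate from the finite-dimensional convergence, completing the proof.
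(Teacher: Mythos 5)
The paper does not prove this statement: it is quoted verbatim (with minor modifications) from Theorem~7 of \citet{Nolan&Pollard1988}, and the only argument the paper supplies is the one-sentence remark that the strengthened entropy condition (i') makes the original condition (ii) of Nolan and Pollard redundant. There is therefore no proof in the paper to compare your sketch against; the appropriate ``proof'' from the paper's standpoint is simply the citation.

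Your outline is a recognizable modern route to a functional CLT for degenerate $U$-processes, and the broad strategy (finite-dimensional convergence via spectral decomposition plus tightness via chaining with a $\log N$ rather than $\sqrt{\log N}$ entropy integral) is sound. However, it does not reflect what \citet{Nolan&Pollard1988} actually did, despite your claim that ``this is the route Nolan and Pollard take.'' They did not use de la Pe\~na--Montgomery-Smith decoupling (that inequality postdates their paper by several years) or Hanson--Wright bounds; instead they built their own symmetrization, a ``covering integral'' entropy device, and a maximal inequality specific to degenerate second-order kernels. A further wrinkle in your step (b): conditional on the data, symmetrization of a $U$-process produces a \emph{Rademacher} chaos of order two, not a Gaussian chaos; an additional comparison step is needed before Gaussian-chaos tail bounds can be invoked. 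Finally, your sketch does not touch on the one substantive modification the paper actually asserts --- that (i') implies Nolan and Pollard's original condition (ii) --- which is the only claim about this theorem that requires any verification on the paper's part.
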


We will use the following corollary to control the cross-terms.
\begin{appcorollary}
Suppose that $\mathcal{G}$ satisfies the conditions of \autoref{thm:nolanpollard2} and $\hat{g}_n$ is a sequence of one-degenerate random functions that take their values in $\mathcal{G}$ such that $P_0^2(\hat{g}_n-g_0)^2\rightarrow 0$ in probability for some $g_0\in \mathcal{G}$. Then $n\mathbb{U}_n(\hat{g}_n-g_0)\rightarrow 0$ in probability. 
\end{appcorollary}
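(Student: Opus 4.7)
The plan is to deduce the corollary from Theorem~A.1 (the Nolan--Pollard weak convergence theorem) combined with an asymptotic equicontinuity argument and the linearity of $\mathbb{U}_n$. The idea is that weak convergence of $n\mathbb{U}_n$ to a process with continuous sample paths in $C(\mathcal{G}, P_0^2)$ yields the required form of uniform approximation at rate $n^{-1}$, and the conclusion follows by plugging in $\hat{g}_n$.

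More concretely, the first step is to apply Theorem~A.1 to the class $\mathcal{G}$, which satisfies the required entropy conditions by hypothesis. This gives that $n\mathbb{U}_n$, viewed as a stochastic process indexed by $g \in \mathcal{G}$, converges in distribution to the Gaussian chaos $\tilde{Q}$, which by the theorem admits a version with sample paths that are uniformly continuous with respect to the $L^2(P_0^2)$ seminorm on $\mathcal{G}$. By a standard consequence of weak convergence to a tight limit supported on continuous paths (see, e.g., Theorem 1.5.7 of \citealp{vanderVaartWellner1996}), the sequence $\{n\mathbb{U}_n\}$ is asymptotically uniformly equicontinuous in probability with respect to this seminorm; that is, for every $\epsilon>0$,
\begin{align*}
\lim_{\delta \downarrow 0}\, \limsup_{n\to\infty}\, P_0^*\!\left(\sup_{\substack{g_1, g_2 \in \mathcal{G}\\ \snorm{g_1-g_2}_{2,P_0^2} < \delta}} \left| n\mathbb{U}_n g_1 - n\mathbb{U}_n g_2 \right| > \epsilon \right) = 0.
\end{align*}

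The second step exploits the linearity of $\mathbb{U}_n$ as an operator on functions: since both $\hat{g}_n$ and $g_0$ take values in $\mathcal{G}$, we may write $n\mathbb{U}_n(\hat{g}_n - g_0) = n\mathbb{U}_n \hat{g}_n - n\mathbb{U}_n g_0$ as a difference of the process $n\mathbb{U}_n$ evaluated at two elements of $\mathcal{G}$. Because $\snorm{\hat{g}_n - g_0}_{2,P_0^2} \to 0$ in probability, for any $\delta>0$ the event $\{\snorm{\hat{g}_n - g_0}_{2,P_0^2} < \delta\}$ has probability tending to one, so asymptotic equicontinuity combined with this convergence gives $|n\mathbb{U}_n(\hat{g}_n - g_0)| > \epsilon$ with vanishing probability. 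This is exactly the claim $n\mathbb{U}_n(\hat{g}_n - g_0) \to 0$ in probability.

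The principal obstacle is the passage from the Nolan--Pollard weak convergence statement, which concerns the process indexed by $\mathcal{G}$, to an approximation that applies at the random, data-dependent index $\hat{g}_n$. This is handled by asymptotic equicontinuity in the $L^2(P_0^2)$ seminorm, which is exactly what the continuous-sample-path conclusion of Theorem~A.1 provides; the usual measurability caveats for suprema over uncountable classes are finessed by working throughout with outer probabilities, in accordance with the convention announced at the beginning of Appendix~B.1. A more pedestrian alternative would be to combine a direct moment bound on $E^*\sup_{g\in \mathcal{G}:\, \snorm{g-g_0}_{2,P_0^2}<\delta} |n\mathbb{U}_n(g-g_0)|$ of Nolan--Pollard type with Markov's inequality, but the equicontinuity route is the cleanest and slots directly on top of the already-invoked Theorem~A.1.
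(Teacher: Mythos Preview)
Your proposal is correct and follows precisely the route the paper indicates: the paper omits the proof, noting only that it ``relies on the continuity of sample paths of (a version of) $\tilde{Q}$'' and referring the reader to the proof of Lemma~19.24 in \cite{vanderVaart98} for the analogous empirical process argument. Your asymptotic-equicontinuity argument is exactly the $U$-process analogue of that lemma's proof, so you have faithfully reconstructed what the authors had in mind.
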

The proof relies on the continuity of sample paths of (a version of) $\tilde{Q}$. The proof is omitted, but we refer the reader to the proof of Lemma 19.24 in \cite{vanderVaart98} for the analogous empirical process result.

\subsection*{Appendix B.2: Controlling $\int \Gamma_n(\cdot,o)dP_0(o)$}
We now give sufficient conditions under which $o\mapsto \int \Gamma_n(o_1,o)dP_0(o)$ belongs to a fixed Donsker class with probability approaching one. We recall from \cite{vanderVaartWellner1996} that a class $\mathcal{F}$ of functions mapping from $\mathscr{O}$ to $\mathbb{R}$ is Donsker if its uniform entropy integral is finite, which holds if its covering number grows sufficiently slowly as the approximation becomes arbitrarily precise.

Let $\mathcal{G}_2$ be some class of functions $g : \mathscr{O}^2\rightarrow[-M,M]$, $M<\infty$, that contains $\{(o_1,o_2)\mapsto \Gamma_n(o_1,o_2) : \Gamma_n\}$. Without loss of generality, we suppose that $M=1$. We take the constant function $G_2\equiv 1$ as envelope for $\mathcal{G}_2$. Let $\mathcal{G}_1\triangleq \{o_1\mapsto \int g_2(o_1,o_2)dP_0(o_2): g_2\in\mathcal{G}_2\}$, and note that this class similarly has envelope $G_1\equiv 1$. The main observation of this subappendix is that
\begin{align}
\sup_{Q_1} N(\epsilon,Q_1,\mathcal{G}_1,G_1)\le \sup_{Q_2} N(\epsilon,Q_2,\mathcal{G}_2,G_2),\label{eq:coveringnums}
\end{align}
where the supremum on the left is over all distributions $Q_1$ on $\mathscr{O}$ such that $\norm{G_1}_{2,Q_1}>0$ and the supremum on the right is over all distributions $Q_2$ on $\mathscr{O}^2$ such that $\norm{G}_{2,Q_2}>0$. If we can show this, then the uniform entropy integrals are also ordered \citep[Section~2.5 in][]{vanderVaartWellner1996}:
\begin{align}
\int_0^\infty \sup_{Q_1} \sqrt{\log N(\epsilon,Q_1,\mathcal{G}_1,G_1)} d\epsilon\le \int_0^\infty \sup_{Q_2} \sqrt{\log N(\epsilon,Q_2,\mathcal{G}_2,G_2)} d\epsilon, \label{eq:entropyints}
\end{align}
where the left- and right-hand sides are the uniform entropy integrals of $\mathcal{G}_1$ and $\mathcal{G}_2$, respectively. Hence, it will suffice to show that the right-hand side is finite. This can be accomplished using the variety of techniques given in Chapter 2 of \cite{vanderVaartWellner1996}. 

We now establish \eqref{eq:coveringnums}. Fix a measure $Q_1$ over $\mathscr{O}$. Let $Q_2$ represent the product measure $Q_1\times P_0$.
Fix $\epsilon>0$. Let $g_{2,1},\ldots,g_{2,m}$ be an $\epsilon\norm{G_2}_{2,Q_2}$ cover of $\mathcal{G}_2$ under $\norm{\cdot}_{2,Q_2}$ so that $\min_j \norm{g_2-g_{2,j}}_{2,Q_2}<\epsilon\norm{G_2}_{2,Q_2}$, where we take $m$ to be equal to its minimal possible value $N(\epsilon,Q_2,\mathcal{G}_2,G_2)$. For each $j$, let $g_{1,j}\equiv \int g_{2,j}(o_1,o_2) dP_0(o_2)$. Fix $g_1\in \mathcal{G}_1$. Recall that, by the definition of $\mathcal{G}_1$, there exists a $g_2\in\mathcal{G}_2$ such that $g_1(\cdot)=\int g_2(\cdot,o)dP_0(o)$. Let $j^*$ be such that $\norm{g_2-g_{2,j^*}}_{2,Q_2}\le \epsilon\norm{G_2}_{2,Q_2}$ for this $g_2$. Observe that
\begin{align*}
\norm{g_1-g_{1,j^*}}_{2,Q_1}^2
&= \int \left(\int \left[g_2(o_1,o_2)-g_{2,j^*}(o_1,o_2)\right]dP_0(o_2)\right)^2 dQ_1(o_1) \\
&\le \int \left[g_2(o_1,o_2)-g_{2,j^*}(o_1,o_2)\right]^2 dQ_2(o_1,o_2) = \norm{g_2-g_{2,j^*}}_{2,Q_2}^2.
\end{align*}
By the choice of $j^*$, it follows that $\norm{g_1-g_{1,j^*}}_{2,Q_1}\le \epsilon \norm{G_2}_{2,Q_2}=\epsilon \norm{G_1}_{2,Q_1}$, where we used that $G_1\equiv 1$ and $G_2\equiv 1$. That is, $g_{1,1},\ldots,g_{1,m}$ is an $\epsilon \norm{G_1}_{2,Q_1}$ cover of $\mathcal{G}_1$ under $\norm{\cdot}_{2,Q_1}$. Thus, $N(\epsilon,Q_1,\mathcal{G}_1,G_1)\le m$. Recalling that we took $m=N(\epsilon,Q_2,\mathcal{G}_2,G_2)$, we have shown that $N(\epsilon,Q_1,\mathcal{G}_1,G_1)\le N(\epsilon,Q_2,\mathcal{G}_2,G_2)$. As $Q_1$ was arbitrary, for each $Q_1$ with support $\mathscr{O}$ there exists a $Q_2$ with support $\mathscr{O}^2$ such that the the preceding inequality holds. Hence, \eqref{eq:coveringnums} holds, and thus so too does the uniform entropy integral ordering \eqref{eq:entropyints}.

\section*{Appendix C: proofs}

For any $T\in\mathscr{S}$, we will use the shorthand notation $T_t\triangleq  T_{P_t}$, $\left.\frac{d}{dt}T_t\right|_{t=\tilde{t}}\triangleq  \dot{T}_{\tilde{t}}$ and $\left.\frac{d^2}{dt^2}T_t\right|_{t=\tilde{t}}\triangleq  \ddot{T}_{\tilde{t}}$. Throughout the appendix we use the following fluctuation submodel through $P_0$ for pathwise differentiability proofs:
\begin{align}
dP_t(o)&\triangleq \left(1 + th_1(o) + t^2 h_2(o)\right)dP_0(o), \nonumber \\
&\mbox{ where }P_0 h_j = 0\textrm{ and }\sup_{o\in\mathcal{O}}|h_j(o)|<\infty, j=1,2. \label{eq:fluctsubmod}
\end{align}

\subsection*{Proofs for \secref{sec:rep}}
We give two lemmas before proving \autoref{thm:pd1}.
\begin{applemma} \label{lem:phidt}
For any $T,U\in\mathscr{S}$ and any fluctuation submodel $dP_t=\left(1+th_1+t^2h_2\right)dP_0$, we have that, for all $\tilde{t}$ in a neighborhood of zero,
\begin{align*}
\dot{\Phi}^{TU}_{\tilde{t}}\ &=\ \int\left[\int e^{-[T_{\tilde{t}}(x_1^T)-U_{\tilde{t}}(x_2^U)]^2} dP_{\tilde{t}}(x_1^T)\right]\left[h_1(o_2) + 2\tilde{t}h_2(o_2)\right]dP_0(o_2) \\
&\hspace{.2in}+ \int\left[\int e^{-[T_{\tilde{t}}(x_1^T)-U_{\tilde{t}}(x_2^U)]^2} dP_{\tilde{t}}(x_2^U)\right]\left[h_1(o_1) + 2\tilde{t}h_2(o_1)\right]dP_0(o_1) \\
&\hspace{.2in}-2\iint \left[T_{\tilde{t}}(x_1^T)-U_{\tilde{t}}(x_2^U)\right]\left[\left.\frac{d}{dt}T_t(x_1^T)\right|_{t=\tilde{t}}-\left.\frac{d}{dt}U_t(x_2^U)\right|_{t=\tilde{t}}\right] e^{-[T_{\tilde{t}}(x_1^T)-U_{\tilde{t}}(x_2^U)]^2} dP_{\tilde{t}}(x_2^U)dP_{\tilde{t}}(x_1^T).
\end{align*}
\end{applemma}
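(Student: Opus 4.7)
The plan is to express $\Phi^{TU}(P_t)$ as a double integral against the fixed product measure $P_0^2$, differentiate under the integral sign using the product rule, and then reabsorb the Radon-Nikodym density into $dP_{\tilde{t}}$ to match the claimed form.

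First I would write out, using the fluctuation submodel \eqref{eq:fluctsubmod}, the identity
\begin{align*}
\Phi^{TU}(P_t)\ =\ \iint e^{-[T_t(x_1^T)-U_t(x_2^U)]^2}\, p_t(o_1)\, p_t(o_2)\, dP_0(o_1)\,dP_0(o_2),
\end{align*}
where $p_t(o)\triangleq 1+th_1(o)+t^2h_2(o)$ is the Radon-Nikodym derivative of $P_t$ with respect to $P_0$. Having the integrating measure no longer depend on $t$ is what allows a clean differentiation step.

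Next, I would justify interchanging the derivative in $t$ with the double integral. On any compact neighborhood of $\tilde{t}$ contained in $(-\delta,\delta)$, condition \ref{it:Sbdd} ensures $|T_t|,|U_t|\le b$, condition \ref{it:Sderivsbdd} gives a uniform (in $x^T,x^U$) bound on $\dot{T}_t$ and $\dot{U}_t$, and $h_1,h_2$ are bounded by hypothesis. Consequently both the integrand $e^{-[T_t-U_t]^2}p_t(o_1)p_t(o_2)$ and its $t$-derivative (computed pointwise in $(o_1,o_2)$ by the product rule) are dominated by a constant, which is $P_0^2$-integrable. Dominated convergence therefore permits pulling $d/dt$ inside both integrals.

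Applying the product rule to the three $t$-dependent factors gives
\begin{align*}
\frac{\partial}{\partial t}\Big[e^{-[T_t-U_t]^2}p_t(o_1)p_t(o_2)\Big]
\ &=\ -2[T_t-U_t]\bigl[\dot T_t-\dot U_t\bigr]e^{-[T_t-U_t]^2}p_t(o_1)p_t(o_2)\\
&\ \ +\ e^{-[T_t-U_t]^2}\bigl[h_1(o_1)+2th_2(o_1)\bigr]p_t(o_2)\\
&\ \ +\ e^{-[T_t-U_t]^2}p_t(o_1)\bigl[h_1(o_2)+2th_2(o_2)\bigr],
\end{align*}
where each $T_t$ is evaluated at $x_1^T$ and each $U_t$ at $x_2^U$.

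Finally I would evaluate at $t=\tilde{t}$ and recognize $p_{\tilde{t}}(o)\,dP_0(o)=dP_{\tilde{t}}(o)$. The first summand, once integrated, produces exactly the last line of the lemma's expression; the second summand yields the second line after noting the inner integral in $o_1$ depends on $o_2$ only through $x_2^U$; and the third summand yields the first line by a symmetric argument. The only substantive obstacle is establishing the domination bound for the exchange of differentiation and integration, but as sketched this is immediate from \ref{it:Sbdd}, \ref{it:Sderivsbdd}, and boundedness of $h_1,h_2$.
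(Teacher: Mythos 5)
Your proof is correct and follows essentially the same approach as the paper: writing $\Phi^{TU}(P_t)$ as an integral against the fixed measure $P_0^2$ with the density factor $p_t(o_1)p_t(o_2)$, interchanging $d/dt$ with the integral (justified by \ref{it:Sderivsbdd} and boundedness), applying the product and chain rules, and reabsorbing $p_{\tilde t}\,dP_0$ into $dP_{\tilde t}$. You spell out the domination argument and the product-rule expansion in more detail than the paper, but the mathematical content is the same.
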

\begin{proof}[Proof of \autoref{lem:phidt}]
We have that
\begin{align*}
\dot{\Phi}^{TU}_{\tilde{t}}\ &=\ \frac{d}{dt}\left.\iint e^{-[T_t(x_1^T)-U_t(x_2^U)]^2} \left\{\prod_{j=1}^2\left[1+th_1(o_j)+t^2h_2(o_j)\right]\right\}dP_0(o_2)dP_0(o_1)\right|_{t=\tilde{t}} \\
&=\ \iint \frac{d}{dt}\left.e^{-[T_t(x_1^T)-U_t(x_2^U)]^2} \left\{\prod_{j=1}^2\left[1+th_1(o_j)+t^2h_2(o_j)\right]\right\}\right|_{t=\tilde{t}}dP_0(o_2)dP_0(o_1)\ ,
\end{align*}
where the derivative is passed under the integral in view of \ref{it:Sderivsbdd}. The result follows by the chain rule.
\end{proof}

For each $T,U\in\mathscr{S}$, define
\begin{align*}
D^{TU}(o)\ &\triangleq \ -2\Phi^{TU}(P_0) + \int \left\{2\left[U_0(o_1)-T_0(o)\right]D_0^T(o) + 1\right\}  e^{-[T_0(o)-U_0(o_1)]^2} dP_0(o_1) \\
&\hspace{0.5in}+ \int \left\{2\left[T_0(o_1)-U_0(o)\right]D_0^U(o) + 1\right\}  e^{-[T_0(o_1)-U_0(o)]^2} dP_0(o_1)\ .
\end{align*}
We have omitted the dependence of $D^{TU}$ on $P_0$ in the notation. We first give a key lemma about the parameter $\Phi^{TU}$.
\begin{applemma}[First-order canonical gradient of $\Phi^{TU}$] \label{lem:phiuv}
Let $T$ and $U$ be members of $\mathscr{S}$. Then $\Phi^{TU}$ has canonical gradient $D^{TU}$ at $P_0$.
\end{applemma}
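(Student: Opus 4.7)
The plan is to evaluate $\dot{\Phi}^{TU}_{\tilde t}$ from \autoref{lem:phidt} at $\tilde t = 0$, substitute the pathwise-derivative representation from \ref{it:Spd}, and show that the result can be written as $\int h_1\,\tilde D^{TU}\,dP_0$ for a function $\tilde D^{TU}$ that differs from $D^{TU}$ only by the constant $2\Phi^{TU}(P_0)$. Since $P_0 h_1=0$, this constant is irrelevant for the gradient identity, but is exactly what is needed to land in $L_0^2(P_0)$.

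Concretely, setting $\tilde t = 0$ in \autoref{lem:phidt} (so that the $h_2$ terms disappear) yields
\begin{align*}
\dot\Phi^{TU}_0\ =\ &\int h_1(o)\!\left[\int e^{-[T_0(x_1^T)-U_0(o)]^2}\,dP_0(x_1^T)\right]\!dP_0(o) \\
&+\int h_1(o)\!\left[\int e^{-[T_0(o)-U_0(x_2^U)]^2}\,dP_0(x_2^U)\right]\!dP_0(o) \\
&-2\iint [T_0(x_1^T)-U_0(x_2^U)]\,\dot T_0(x_1^T)\,e^{-[T_0(x_1^T)-U_0(x_2^U)]^2}\,dP_0(x_2^U)\,dP_0(x_1^T) \\
&+2\iint [T_0(x_1^T)-U_0(x_2^U)]\,\dot U_0(x_2^U)\,e^{-[T_0(x_1^T)-U_0(x_2^U)]^2}\,dP_0(x_2^U)\,dP_0(x_1^T),
\end{align*}
where I have used that the marginals of the $h_1$-integrands depend only on $x^T$ or $x^U$, so that $T_0$ and $U_0$ can be evaluated at $o$ and $dP_0(x^T)$, $dP_0(x^U)$ are the $P_0$-marginals.

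Next I invoke \ref{it:Spd}: $\dot T_0(x^T)=\int D_0^T(o_1)h_1(o_1)\,dP_0(o_1|x^T)$, and likewise for $\dot U_0$. Plugging these into the two cross-terms and interchanging the order of integration (using iterated expectation $dP_0(o|x^T)\,dP_0(x^T)=dP_0(o)$, and the fact that $T_0(o)=T_0(x^T(o))$), the first cross-term becomes
\[
-2\int h_1(o)\,D_0^T(o)\!\left[\int [T_0(o)-U_0(x_2^U)]\,e^{-[T_0(o)-U_0(x_2^U)]^2}\,dP_0(x_2^U)\right]\!dP_0(o),
\]
and symmetrically for the $\dot U_0$ cross-term. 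Assembling everything gives $\dot\Phi^{TU}_0=\int h_1\,\tilde D^{TU}\,dP_0$ where $\tilde D^{TU}(o)=D^{TU}(o)+2\Phi^{TU}(P_0)$. Since $P_0 h_1=0$, we may freely subtract the constant $2\Phi^{TU}(P_0)$, obtaining the gradient identity $\dot\Phi^{TU}_0=\int h_1\,D^{TU}\,dP_0$.

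It remains to verify that $D^{TU}\in L_0^2(P_0)$, so that it is in fact the canonical gradient (uniqueness follows from the nonparametric model). Boundedness is immediate from \ref{it:Sbdd} and the uniform bound on $D_0^T,D_0^U$ in \ref{it:Spd}, which gives square integrability. For $P_0 D^{TU}=0$, note that each term of the form $D_0^T(o)\cdot g(x^T(o))$ integrates to zero by conditioning on $X^T$ and applying $\int D_0^T(o)\,dP_0(o|x^T)=0$; the analogous identity handles the $D_0^U$ term, and the constants $-2\Phi^{TU}(P_0)$ and $\int e^{-[\cdots]^2}dP_0\cdot dP_0$ cancel exactly. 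The main bookkeeping hurdle is simply keeping the two dummy variables $x^T$ and $x^U$ straight while applying Fubini and the tower property; once this is done the rest is algebra.
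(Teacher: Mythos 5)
Your proposal is correct and takes essentially the same route as the paper: evaluate Lemma~A.1 at $\tilde t=0$ with $h_2\equiv 0$, substitute the representation in \ref{it:Spd} for $\dot T_0$ and $\dot U_0$ via the tower property, and observe that re-centering by $-2\Phi^{TU}(P_0)$ is free because $P_0h_1=0$. The only difference is cosmetic — the paper folds the centering into the first two terms at the start, while you add it at the end — and you additionally spell out the $L_0^2(P_0)$ check, which the paper leaves implicit.
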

\begin{proof}[Proof of \autoref{lem:phiuv}]
To consider first-order behavior it suffices to consider fluctuation submodels in which $h_2(o)=0$ for all $o$. We first derive the first-order pathwise derivative of the parameter $\Phi^{TU}$ at $P_0$. Applying the preceding lemma at $\tilde{t}=0$ yields that
\begin{align*}
\frac{d}{dt}&\Phi^{TU}(P_t)\Big|_{t=0}= \int\left[\int e^{-[T_0(x_1^T)-U_0(x_2^U)]^2} dP_0(x_1^T)\right]h_1(o_2)dP_0(o_2) \\
&+ \int\left[\int e^{-[T_0(x_1^T)-U_0(x_2^U)]^2} dP_0(x_2^U)\right]h_1(o_1)dP_0(o_1) \\
&-2\int\int (T_0(x_1^T)-U_0(x_2^U))(\dot{T}_0(x_1^T)-\dot{U}_0(x_2^U))e^{-[T_0(x_1^T)-U_0(x_2^U)]^2} dP_0(x_2^U)dP_0(x_1^T).
\end{align*}
The first two terms in the last equality are equal to
\begin{align*}
\textrm{First term}&= \int \left(E_{P_0}\left[e^{-[T_0(X^T)-U_0(x^U)]^2}\right]-E_{P_0^2}\left[e^{-[T_0(X_1^T)-U_0(X_2^U)]^2}\right]\right)h_1(o)dP_0(o) \\
\textrm{Second term}&= \int \left(E_{P_0}\left[e^{-[T_0(x^T)-U_0(X^U)]^2}\right]-E_{P_0^2}\left[e^{-[T_0(X_1^T)-U_0(X_2^U)]^2}\right]\right)h_1(o)dP_0(o).
\end{align*}
We now look to find the portion of the canonical gradient given by the third term. We have that
\begin{align*}
-2\int\int&(T_0(x_1^T)-U_0(x_2^U))\dot{T}_0(x_1^T)e^{-[T_0(x_1^T)-U_0(x_2^U)]^2} dP_0(x_2^U)dP_0(x_1^T) \\
&= \int 2E_{P_0}\left[ (U_0(X^U)-T_0(x^T))  e^{-[T_0(x^T)-U_0(X^U)]^2} \right]D_0^T(o)h_1(o)dP_0(o) \\
2\int\int&(T_0(x_1^T)-U_0(x_2^U))\dot{U}_0(x_2^U)e^{-[T_0(x_1^T)-U_0(x_2^U)]^2} dP_0(x_2^U)dP_0(x_1^T) \\
&= \int 2E_{P_0}\left[ (T_0(X^T)-U_0(x^U))  e^{-[T_0(X^T)-U_0(x^U)]^2} \right]D_0^U(o)h_1(o)dP_0(o).
\end{align*}
Collecting terms, a first-order Taylor expansion of $t\mapsto \Phi^{TU}(P_t)$ about $t=0$ yields that
\begin{align*}
\Phi^{TU}(P_t) - \Phi^{TU}(P_0)&= tE_{P_0}\left[D^{TU}(O)h_1(O)\right]+ o(t).
\end{align*}
Thus $\Phi^{TU}$ has canonical gradient $D^{TU}$ at $P_0$.
\end{proof}

The proof of \autoref{thm:pd1} is simple given the above lemma.
\begin{proof}[Proof of \autoref{thm:pd1}]
\autoref{lem:phiuv}, the fact that $\Psi(P)\triangleq \Phi^{RR}(P) - 2\Phi^{RS}(P) + \Phi^{SS}(P)$, and the linearity of differentiation immediately yield that the canonical gradient of $\Psi$ can be written as $D^{RR} - 2 D^{RS} + D^{SS}$. Straightforward calculations show that this is equivalent to $o\mapsto 2[P_0 \Gamma_0(o,\cdot)-\psi_0]$.
\end{proof}

We will use the following lemma in the proof of \autoref{cor:D1degen} to prove that $R_0(O)$ and $S_0(O)$ are degenerate if $D_1^{\Psi}\equiv 0$ and $\mathscr{H}_0$ does not hold. Because we were unable to find the proof that the $U$-statistic kernel for estimating the MMD of two variables $X$ and $Y$ is degenerate if and only if $\mathscr{H}_0$ holds or $X$ and $Y$ are degenerate, we give a proof here that applies in a more general setting than that which we consider in this paper.
\begin{applemma} \label{lem:RSdegen}
Let $Q$ be a distribution over $(X,Y)\in\mathcal{Z}^2$, where $\mathcal{Z}$ is a compact metric space. Let $(x,y)\mapsto k(x,y)$ be a universal kernel on this metric space, i.e. a kernel for which the resulting reproducing kernel Hilbert space $\mathcal{H}$ is dense in the set of continuous funtions on $\mathcal{Z}$ with respect to the supremum metric. Further, suppose that $E_Q\sqrt{k(X,X)}$ and $E_Q\sqrt{k(Y,Y)}$ are finite. Finally, suppose that the marginal distribution of $X$ under $Q$ is different from the marginal distribution of $Y$ under $Q$.

There exists some fixed constant $C$ such that
\begin{align}
\int \langle \phi(x_1)-\phi(y_1),\phi(x_2)-\phi(y_2)\rangle_{\mathcal{H}}  dQ(x_2,y_2)=C \label{eq:canonical}
\end{align}
for ($Q$ almost) all $(x_1,y_1)\in\mathcal{Z}^2$ if and only if the joint distribution of $(X,Y)$ under $Q$ is degenerate at a single point. Above $\langle\cdot,\cdot\rangle_{\mathcal{H}}$ and $\phi(z)\triangleq k(z,\cdot)$ are the inner product and the feature map in $\mathcal{H}$, respectively.
\end{applemma}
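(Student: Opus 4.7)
The first step is to collapse the bilinear integrand into a one-dimensional functional equation using the mean embedding. Under the moment assumptions $E_Q\sqrt{k(X,X)},E_Q\sqrt{k(Y,Y)}<\infty$, the Bochner integrals $\mu_X\triangleq \int\phi(x)\,dQ_X(x)$ and $\mu_Y\triangleq \int\phi(y)\,dQ_Y(y)$ are well-defined elements of $\mathcal{H}$. Setting $f\triangleq \mu_X-\mu_Y$, the reproducing property gives
\[
\int\langle\phi(x_1)-\phi(y_1),\phi(x_2)-\phi(y_2)\rangle_{\mathcal{H}}\,dQ(x_2,y_2)=\langle\phi(x_1)-\phi(y_1),f\rangle_{\mathcal{H}}=f(x_1)-f(y_1).
\]
Universality of $k$ implies injectivity of the mean embedding on bounded signed measures, so $f\neq 0$ in $\mathcal{H}$ whenever $Q_X\neq Q_Y$. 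The hypothesis of the lemma thus reduces to the requirement that $f(X)-f(Y)=C$ holds $Q$-a.s.\ for some $C\in\mathbb{R}$.

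The reverse direction is immediate: if $Q$ is the point mass at $(x_0,y_0)$, take $C\triangleq f(x_0)-f(y_0)$ and the $Q$-a.s.\ equality holds trivially.

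For the forward direction, taking the $Q$-expectation of the a.s.\ identity pins down $C=E_Q[f(X)]-E_Q[f(Y)]=\langle f,\mu_X-\mu_Y\rangle_{\mathcal{H}}=\|f\|_{\mathcal{H}}^2>0$, and taking the variance yields $\mathrm{Var}_Q(f(X)-f(Y))=0$. This is a rigidity statement on the joint covariance of $(\phi(X),\phi(Y))$ in $\mathcal{H}\otimes\mathcal{H}$. My plan is then to iterate the rigidity: applying an analogous argument to shifted mean embeddings $\int g(x)\phi(x)\,dQ_X(x)$ and $\int g(y)\phi(y)\,dQ_Y(y)$ for $g$ ranging over a dense subfamily of $C(\mathcal{Z})$, and using universality to separate points of $\mathcal{Z}$, should force both marginals, and hence the joint, to concentrate on a single point.

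The main obstacle is precisely this last step: transferring the scalar rigidity $f(X)-f(Y)=C$ a.s.\ into the conclusion that $\mathrm{supp}(Q)$ is a singleton in $\mathcal{Z}^2$. A single function being a.s.\ constant on a marginal support does not on its own make that support Dirac, so universality of $k$ must be leveraged through a whole family of separating test functions in $\mathcal{H}$---plausibly via a Cauchy--Schwarz argument in the RKHS that forces the cross-covariance operator between $\phi(X)$ and $\phi(Y)$ to be rank one, after which the injectivity of the mean embedding on the marginals pins the support of $Q$ down to a single point.
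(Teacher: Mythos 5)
Your reduction of the bilinear integral to the scalar identity $f(X)-f(Y)=C$ a.s.\ with $f=\mu_X-\mu_Y$, the observation that universality gives $f\neq 0$ whenever $Q_X\neq Q_Y$, the trivial reverse direction, and the computation $C=\|f\|_{\mathcal{H}}^2>0$ all coincide with the first half of the paper's argument. But at that point your proposal stops: the forward direction is left as a plan (``iterate the rigidity over shifted mean embeddings,'' ``force the cross-covariance operator to be rank one'') without any of the steps actually carried out. You explicitly flag this as ``the main obstacle,'' and indeed it is the entire content of the lemma, so as written the proposal is not a proof.

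For contrast, here is how the paper closes the gap. Rewriting the a.s.\ identity as $\langle\phi(x_1)-\phi(y_1),\mu_X-\mu_Y\rangle_{\mathcal{H}}-\|\mu_X-\mu_Y\|_{\mathcal{H}}^2=0$, the paper asserts that this forces $\phi(x_1)-\phi(y_1)=\mu_X-\mu_Y$, so that the RKHS vector $\phi(x_1)-\phi(y_1)$ is constant over a $Q$-probability-one set $\mathcal{D}\subseteq\mathcal{Z}^2$. Once that vector is fixed, universality lets one approximate any continuous $g$ uniformly by $g_\epsilon\in\mathcal{H}$, and since $\langle\phi(x_1)-\phi(y_1),g_\epsilon\rangle_{\mathcal{H}}=g_\epsilon(x_1)-g_\epsilon(y_1)$ does not depend on $(x_1,y_1)\in\mathcal{D}$, neither does $g(x_1)-g(y_1)$; applying this with $g(z)=z$ and $g(z)=z^2$ pins down $x_1-y_1$ and $x_1+y_1$, hence $(x_1,y_1)$, as constant on $\mathcal{D}$, which is degeneracy of $Q$. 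Thus the step you are missing is precisely the upgrade from ``$\phi(x_1)-\phi(y_1)$ has constant inner product against the single vector $\mu_X-\mu_Y$'' to ``$\phi(x_1)-\phi(y_1)$ is itself constant,'' after which separating points with $z\mapsto z$ and $z\mapsto z^2$ is elementary. Be aware, incidentally, that the implication $\langle a,b\rangle_{\mathcal{H}}=\|b\|_{\mathcal{H}}^2\Rightarrow a=b$ does not hold in a Hilbert space in general (any $w\perp b$ can be added to $a$), so this pivotal step in the paper deserves more justification than the bare ``if and only if'' offered there; your hesitation to leap over it was well placed, but without replacing it by a correct argument---or supplying the separating-functions argument by some other route---your proposal remains incomplete.
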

\begin{proof}
If $Q$ is degenerate then clearly (\ref{eq:canonical}) holds.

If (\ref{eq:canonical}) holds, then our assumption that $X$ has a different marginal distribution than $Y$ tells us that $C>0$ \citep{Grettonetal2012}. Hence, for almost all $(x_1,y_1)$,
\begin{align*}
\langle\phi(x_1)-\phi(y_1),\mu_X-\mu_Y\rangle_{\mathcal{H}} - \langle\mu_X-\mu_Y,\mu_X-\mu_Y\rangle_{\mathcal{H}}=0,
\end{align*}
where $\mu_X$ and $\mu_Y$ in $\mathcal{H}$ have the property that $\langle\mu_X,f\rangle_{\mathcal{H}}=E_Q f(X)$ and $\langle\mu_Y,f\rangle_{\mathcal{H}}=E_Q f(Y)$ for all $f\in\mathcal{H}$ \citep[Lemma 3 in][]{Grettonetal2012}. The above holds if and only if $\phi(x_1)-\phi(y_1)=\mu_X-\mu_Y$. Noting that $\mu_X-\mu_Y$ does not rely on $x_1,y_1$, it follows that $\phi(x_1)-\phi(y_1)$ must not rely on $x_1,y_1$ for all $(x_1,y_1)$ in some $Q$ probability one set $\mathcal{D}\subseteq\mathcal{Z}^2$.

Fix a continuous function $f : \mathcal{Z}\rightarrow\mathbb{R}$ and $x_1,y_1\in\mathcal{D}$. For any $\epsilon>0$, the universality of $\mathcal{H}$ ensures that there exists an $f_{\epsilon}\in\mathcal{H}$ such that $\norm{f_{\epsilon}-f}_{\infty}\le\epsilon$. By the triangle inequality,
\begin{align*}
\left|f(x_1)-f(y_1) - f_{\epsilon}(x_1)+f_{\epsilon}(y_1)\right|&\le 2\epsilon.
\end{align*}
Because $\phi(x_1)-\phi(y_1)$ is constant and $f\in\mathcal{H}$, $\langle\phi(x_1)-\phi(y_1),f_{\epsilon}\rangle_{\mathcal{H}}=f_{\epsilon}(x_1)-f_{\epsilon}(y_1)$ does not rely on $x_1,y_1$ for any $\epsilon$. Furthermore, the fact that $f_{\epsilon}$ converges to $f$ in supremum norm ensures that $|f_{\epsilon}(x_1)-f_{\epsilon}(y_1)|$ converges to a fixed quantity $K$ (which does not rely on $x_1$ or $y_1$) as $\epsilon\rightarrow 0$. Applying this to the above yields that $f(x_1)-f(y_1)=K$.

As $f$ was an arbitrary continuous function and $X_1\not\equiv Y_1$, we can apply this relation to $z\mapsto z$ and $z\mapsto z^2$ to show that $x_1-y_1$ and $x_1+y_1$ do not rely on the choice of $(x_1,y_1)\in\mathcal{D}$. Hence $(x_1-y_1 + x_1 + y_1)/2=x_1$ and $(x_1 + y_1 - y_1 + x_1)/2=y_1$ do not rely on the choice of $(x_1,y_1)\in\mathcal{D}$. This can only occur if $(x_1,y_1)$ are constant over the probability $1$ set $\mathcal{D}$, i.e. if $Q$ is degenerate.
\end{proof}
For the two-sample problem in \cite{Grettonetal2012}, one can take $Q$ to be a product distribution of the marginal distribution of $X$ and the marginal distribution of $Y$.

\begin{proof}[Proof of \autoref{cor:D1degen}]
We first prove sufficiency. If \hyperlink{it:condH0}{(i)} holds, then $2D^{RS}=D^{RR} + D^{SS}$. It follows that $D_1^{\Psi}\equiv 0$ under $\mathscr{H}_0$. Now suppose  \hyperlink{it:conddegen}{(ii)} holds. It is a simple matter of algebra to verify that $D_1^{RR}\equiv D_1^{RS}\equiv D_1^{SS}\equiv 0$. Hence $D_1^{\Psi}\equiv 0$, yielding the sufficiency of the stated conditions.

We now show the necessity of the stated conditions. Suppose that $\sigma_0=0$ and $\mathscr{H}_0$ does not hold. It is easy to verify that
\begin{align*}
\tilde{D}_1^{\Psi}\triangleq&\; E_{P_0}\left[e^{-[R_0(O)-R_0(o)]^2}\right] + E_{P_0}\left[e^{-[S_0(O)-S_0(o)]^2}\right] \nonumber \\
&-E_{P_0}\left[e^{-[R_0(O)-S_0(o)]^2}\right] - E_{P_0}\left[e^{-[R_0(o)-S_0(O)]^2}\right]-\psi_0
\end{align*}
is a first-order gradient in the model where $R_0$ and $S_0$ are known (possibly an inefficient gradient depending on the form of $R$ and $S$). Call the variance of this gradient $\tilde{\sigma}_0$. As the model where $R_0$ and $S_0$ are known is a submodel of the (locally) nonparametric model, $\tilde{\sigma}_0\le \sigma_0$, and hence $\tilde{\sigma}_0=0$ and $\tilde{D}_1^{\Psi}\equiv 0$. Now, if $\tilde{\sigma}_0=0$ and $\mathscr{H}_0$ does not hold, then \autoref{lem:RSdegen} shows that $R_0(O)$ and $S_0(O)$ are degenerate. Finally, $\tilde{D}_1^{\Psi}\equiv 0$ and the degeneracy of $R_0(O)$ and $S_0(O)$ shows that for almost all $o$,
\begin{align*}
D_1^{\Psi}(o)&= 2D^{RS}(o) = 2(s_0-r_0)\left(D_0^R(o)-D_0^S(o)\right) e^{-[r_0-s_0]^2},
\end{align*}
where we use $r_0$ and $s_0$ to denote the (probability $1$) values of $R_0(O)$ and $S_0(O)$. The above is zero almost surely if and only if $D_0^R\equiv D_0^S$. Thus $\sigma_0=0$ only if \hyperlink{it:condH0}{(i)} or  \hyperlink{it:conddegen}{(ii)} holds.
\end{proof}

We give the following lemma before proving \autoref{thm:pd2}. Before giving the lemma, we define the function $\Pi : \mathscr{S}\rightarrow\mathbb{R}$. Suppressing the dependence on $P_0$ and $h_1$, $h_2$, for all $V\in\mathscr{S}$ and $t\not=0$ we define
\begin{align*}
\Pi(V)&\triangleq 2\int\int \Big[2 (V_0(o_2)-V_0(o_1)) \dot{V}_0(o_2)h_1(o_2) + 2 (V_0(o_2)-V_0(o_1))^2 \dot{V}_0(o_2)^2 \\
&\hspace{5em}+ h_2(o_2) - \dot{V}_0(o_2)^2 + (V_0(o_2)-V_0(o_1))\ddot{V}_0(o_2)\Big]e^{-[V_0(o_2)-V_0(o_1)]^2} dP_0(o_2) dP_0(o_1).
\end{align*}
\begin{applemma} \label{lem:phi2d}
For any fluctuation submodel consistent with (\ref{eq:fluctsubmod}), $T,U\in\mathscr{S}$ with $T_0(O)\eqd U_0(O)$, and $t\in\mathbb{R}$ sufficiently close to zero, we have that
\begin{align*}
\frac{d^2}{dt^2}\Phi^{TU}(P_t)\Big|_{t=0}=&\; 2\int\int \Gamma_0^{TU}(o_1,o_2) h_1(o_1)h_1(o_2)dP_0(o_2)dP_0(o_1) + \Pi(T) + \Pi(U).
\end{align*}
\end{applemma}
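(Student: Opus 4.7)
The plan is to start from Lemma A.1's closed form for $\dot\Phi^{TU}_{\tilde t}$, differentiate once more in $\tilde t$, and evaluate at $\tilde t = 0$. Conditions \ref{it:Sbdd} and \ref{it:Sderivsbdd} provide uniform bounds on $T_t, \dot T_t, \ddot T_t, U_t, \dot U_t, \ddot U_t$ in a neighborhood of zero, which justifies the interchange of differentiation and integration throughout via dominated convergence. Organizing the computation as $\Phi^{TU}(P_t) = \iint \phi_t(o_1,o_2)\,L_t(o_1)\,L_t(o_2)\,dP_0(o_1)dP_0(o_2)$ with $\phi_t(o_1,o_2) = e^{-[T_t(x_1^T)-U_t(x_2^U)]^2}$ and $L_t(o) = 1 + th_1(o) + t^2 h_2(o)$, the Leibniz rule yields
\begin{align*}
\ddot\Phi^{TU}\big|_{t=0} = \iint \bigl[\phi_0'' + 2\phi_0'\,(h_1(o_1)+h_1(o_2)) + 2\phi_0\, h_1(o_1)h_1(o_2) + 2\phi_0\,(h_2(o_1)+h_2(o_2))\bigr]\,dP_0(o_1)dP_0(o_2),
\end{align*}
and the chain rule applied to $\phi_t = e^{-\Delta_t^2}$ with $\Delta_t = T_t - U_t$ produces $\phi_0' = -2\Delta_0 \dot\Delta_0\, \phi_0$ and $\phi_0'' = [-2\dot\Delta_0^2 - 2\Delta_0 \ddot\Delta_0 + 4\Delta_0^2 \dot\Delta_0^2]\phi_0$, where $\dot\Delta_0 = \dot T_0 - \dot U_0$ and $\ddot\Delta_0 = \ddot T_0 - \ddot U_0$.

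After fully expanding these expressions, the integrand splits naturally into two groups. The first, the cross-index group, collects the pure measure-change contribution $2\phi_0 h_1(o_1)h_1(o_2)$, the cross-index pieces of $-4\Delta_0 \dot\Delta_0 \phi_0\,(h_1(o_1)+h_1(o_2))$ (namely $-4\Delta_0\dot T_0(o_1)h_1(o_2)\phi_0$ and $+4\Delta_0\dot U_0(o_2)h_1(o_1)\phi_0$), and the $\dot T_0(o_1)\dot U_0(o_2)$-cross portion of $\phi_0''$. For each of these, I invoke \ref{it:Spd} via the identity $\dot T_0(x_1^T) = \int D_0^T(o)h_1(o)\,dP_0(o\mid x_1^T)$: because the surrounding factors depend on $o_1$ only through $x_1^T$, the tower property allows me to replace $\dot T_0(x_1^T)$ by $D_0^T(o_1)h_1(o_1)$ under the full $o_1$-integral, and symmetrically for $\dot U_0(x_2^U)$. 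Direct algebraic collection of the resulting $h_1(o_1)h_1(o_2)$-integrand reproduces exactly the coefficient $2\Gamma_0^{TU}(o_1,o_2)$ through the terms $+1$, $\pm 2\Delta (D_0^U - D_0^T)$, and $-(4\Delta^2 - 2)D_0^T D_0^U$, delivering the first piece of the claimed identity.

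The remaining single-index terms—those involving $\dot T_0(o_1)^2$, $\ddot T_0(o_1)$, same-index $\dot T_0(o_1)h_1(o_1)$, $h_2(o_1)$, together with their $U$-analogues on $o_2$—still carry $\Delta_0$ and $e^{-\Delta_0^2}$ factors that mix $T_0$ and $U_0$. Here the hypothesis $T_0(O) \eqd U_0(O)$ enters: in the single-$T$ subgroup the integrand depends on $o_2$ only through $U_0(x_2^U)$, so equality in distribution permits the substitution $U_0(x_2^U) \mapsto T_0(x_2^T)$ under $dP_0(o_2)$, after which a relabeling $o_1 \leftrightarrow o_2$ (legitimate by the symmetry of $dP_0^2$ and the evenness of $e^{-\Delta^2}$) recasts the subgroup into the form appearing in $\Pi(T)$; the single-$U$ subgroup is treated symmetrically to yield $\Pi(U)$. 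The main technical obstacle, and the crux of the argument, is the bookkeeping at this last step: a careful inspection shows that the $\Delta$-linear pieces (those involving $\ddot V_0$ and $\dot V_0 h_1$) need not match the corresponding pieces of $\Pi(V)$ integrand by integrand, so that closing the identity requires an aggregate cancellation of the form $\iint \tilde\Delta\,[\ddot V_0(o_2) + 2\dot V_0(o_2)h_1(o_2)]\,e^{-\tilde\Delta^2}\,dP_0^2 = 0$ with $\tilde\Delta = V_0(o_2) - V_0(o_1)$, which I expect to establish by first integrating $h_1(o_2)$ against the conditional $dP_0(o_2\mid x_2^V)$—so as to pair $\dot V_0$ with its conditional-pathwise-derivative representation—and then using $T_0 \eqd U_0$ and the swap symmetry to force the resulting expression to vanish.
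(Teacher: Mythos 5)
Your proof follows the paper's argument almost exactly: expand $\Phi^{TU}(P_t)=\iint\phi_t L_t(o_1)L_t(o_2)\,dP_0^2$ by the Leibniz and chain rules, group the integrand into a cross-index piece (where \ref{it:Spd} is applied via the tower property to replace $\dot T_0(x_1^T)$ by $D_0^T(o_1)h_1(o_1)$, and symmetrically for $U$), and two single-index pieces that, after exploiting $T_0(O)\eqd U_0(O)$ and Fubini/relabeling, become functions of $T$ alone and of $U$ alone. The cross-index piece giving $2\iint\Gamma_0^{TU}h_1 h_1\,dP_0^2$ is correct and matches the paper. Where you and the paper diverge is precisely the last step, and you deserve credit for noticing something the paper glosses over: the linear-in-$\Delta$ pieces of the single-index integrals really do \emph{not} match the linear pieces of $\Pi(V)$ as printed.

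However, the aggregate cancellation you invoke to close the gap,
\[
\iint \tilde\Delta\,\bigl[\ddot V_0(o_2) + 2\dot V_0(o_2)h_1(o_2)\bigr]\,e^{-\tilde\Delta^2}\,dP_0^2 \;=\; 0 ,
\]
is false in general. Carrying out exactly the step you propose --- integrating $h_1(o_2)$ against $dP_0(o_2\mid x_2^V)$ so that $2\dot V_0(o_2)h_1(o_2)$ is replaced by $2\dot V_0(x_2^V)\,\bar h_1(x_2^V)$ with $\bar h_1(x^V)\triangleq E_{P_0}[h_1(O)\mid X^V=x^V]$ --- reduces the inner bracket to $\ddot V_0(x_2^V)+2\dot V_0(x_2^V)\bar h_1(x_2^V)$. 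For $V_P(o)=E_P[Y\mid W=w]$ and the two-parameter fluctuation $dP_t=(1+th_1+t^2h_2)\,dP_0$, a direct computation gives $\ddot V_0(w)=2E_{P_0}[(Y-V_0(w))h_2\mid W=w]-2\bar h_1(w)\dot V_0(w)$, so that $\ddot V_0+2\dot V_0\bar h_1 = 2E_{P_0}[D_0^V(O)h_2(O)\mid X^V]$. This vanishes when $h_2\equiv 0$ but not otherwise, so the integral above is generically nonzero whenever $h_2\not\equiv 0$ (which the lemma allows). Thus the cancellation you lean on cannot be established and this is a genuine gap in your argument.

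For what it is worth, what is actually going on is that the paper's own derivation contains a sign slip at the ``switch $o_1\leftrightarrow o_2$'' step: relabeling should preserve the minus signs in front of $(T_0(o_1)-U_0(o_2))\dot T_0(o_1)h_1(o_1)$ and $(T_0(o_1)-U_0(o_2))\ddot T_0(o_1)$, but the paper's displayed line has them flipped to plus. Consequently the single-index contributions are of the form ``$\Pi$ with the opposite sign on the $\Delta$-linear terms'' rather than $\Pi$ itself. The lemma is still correct in substance --- the single-index contribution does split as a function of $T$ alone plus a function of $U$ alone --- and since in the proof of Theorem A.2 the $\Pi$ terms cancel identically ($2\Pi(R)+2\Pi(S)-\Pi(R)-\Pi(S)-\Pi(S)-\Pi(R)=0$), the exact sign convention in $\Pi$'s definition is immaterial downstream. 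The cleaner fix is therefore to adjust the signs in $\Pi$ (or in the intermediate display), not to assert the cancellation identity.
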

\begin{proof}
Let $H_t(o)\triangleq 1 + th_1(o) + t^2h_2(o)$ and $\dot{H}_t(o)\triangleq h_1(o) + 2th_2(o)$.
\begin{align}
\frac{d^2}{dt^2}\Phi^{TU}(P_t)\Big|_{t=0}=&\; \frac{d}{dt}\int\int \Big[H_t(o_1)\dot{H}_t(o_2) + \dot{H}_t(o_1) H_t(o_2) \nonumber \\
&\hspace{5em}-2 (T_{t}(o_1)-U_{t}(o_2))\left(\dot{T}_t(o_1)-\dot{U}_t(o_2)\right) H_t(o_1) H_t(o_2)\Big] \nonumber \\
&\hspace{4em}\times e^{-[T_{t}(o_1)-U_{t}(o_2)]^2} dP_0(o_2)dP_0(o_1)\Big|_{t=0} \label{eq:messyprodrule}
\end{align}
We will pass the derivative inside the integral using \ref{it:Sderivsbdd} and apply the product rule. The first term we need to consider is
\begin{align*}
\frac{d}{dt}&\Big[H_t(o_1)\dot{H}_t(o_2) + \dot{H}_t(o_1) H_t(o_2)-2 (T_{t}(o_1)-U_{t}(o_2))\left(\dot{T}_t(o_1)-\dot{U}_t(o_2)\right) H_t(o_1) H_t(o_2)\Big]\Big|_{t=0} \\
=&\; 2\left[h_2(o_1) + h_1(o_1)h_1(o_2) + h_2(o_2)\right] - 2\left(\dot{T}_0(o_1)-\dot{U}_0(o_2)\right)^2 -2 (T_0(o_1)-U_0(o_2))\left(\ddot{T}_0(o_1)-\ddot{U}_0(o_2)\right) \\
&\hspace{2em} -2 (T_0(o_1)-U_0(o_2))\left(\dot{T}_0(o_1)-\dot{U}_0(o_2)\right) \left(h_1(o_1)+h_1(o_2)\right).
\end{align*}
The second is
\begin{align*}
\left.\frac{d}{dt}e^{-[T_{t}(o_1)-U_{t}(o_2)]^2}\right|_{t=0}&= -2 (T_0(o_1)-U_0(o_2))\left(\dot{T}_0(o_1)-\dot{U}_0(o_2)\right)e^{-[T_0(o_1)-U_0(o_2)]^2}.
\end{align*}
Returning to (\ref{eq:messyprodrule}), this shows that $\frac{d^2}{dt^2}\Phi^{TU}(P_t)\Big|_{t=0}$ is equal to
\begin{align*}
&2\int\int \Big[-2 (T_0(o_1)-U_0(o_2)) \dot{T}_0(o_1)h_1(o_1) + 2 (T_0(o_1)-U_0(o_2))^2 \dot{T}_0(o_1)^2 \\
&\hspace{4.5em} + h_2(o_1) - \dot{T}_0(o_1)^2 - (T_0(o_1)-U_0(o_2))\ddot{T}_0(o_1)\Big] e^{-[T_0(o_1)-U_0(o_2)]^2} dP_0(o_2)dP_0(o_1) \\
&+2\int\int \Big[2 (T_0(o_1)-U_0(o_2)) \dot{U}_0(o_2)h_1(o_2) + 2 (T_0(o_1)-U_0(o_2))^2 \dot{U}_0(o_2)^2 \\
&\hspace{4.5em} + h_2(o_2) - \dot{U}_0(o_2)^2 + (T_0(o_1)-U_0(o_2))\ddot{U}_0(o_2)\Big] e^{-[T_0(o_1)-U_0(o_2)]^2} dP_0(o_2)dP_0(o_1) \\
&+2\int\int \Big[2(T_0(o_1)-U_0(o_2))\left(\dot{U}_0(o_2)h_1(o_1)-\dot{T}_0(o_1)h_1(o_2)\right) \\
&\hspace{4em} - \left(4(T_0(o_1)-U_0(o_2))^2-2\right) \dot{T}_0(o_1) \dot{U}_0(o_2) + h_1(o_1)h_1(o_2)\Big]e^{-[T_0(o_1)-U_0(o_2)]^2} dP_0(o_2)dP_0(o_1).
\end{align*}
The expression inside the second pair of integrals only depends on $o_1$ through $T(o_1)$. Thus we can rewrite this term as $E_{P_0}[f(T(O_1))]$ for a fixed function $f$ that relies on $P_0$, $h_1$, $h_2$, and $U$. Under $\mathscr{H}_0$, we can rewrite this term as $E_{P_0}[f(U(O_1))]$. That is, we can replace each $T(O_1)$ in the second pair of integrals with $U(O_1)$. This yields $\Pi(U)$. Switching the roles of $o_1$ and $o_2$ in the first pair of integrals above and applying Fubini's theorem shows that
\begin{align*}
2\int\int \Big[&2 (T_0(o_2)-U_0(o_1)) \dot{T}_0(o_2)h_1(o_2) + 2 (T_0(o_2)-U_0(o_1))^2 \dot{T}_0(o_2)^2 \\
&+ h_2(o_2) - \dot{T}_0(o_2)^2 + (T_0(o_2)-U_0(o_1))\ddot{T}_0(o_2)\Big] e^{-[T_0(o_2)-U_0(o_1)]^2} dP_0(o_2) dP_0(o_1).
\end{align*}
By the same arguments used to for the second pair of integrals, the above expression is equal to $\Pi(T)$ under $\mathscr{H}_0$. By \ref{it:Spd}, the third pair of integrals can be rewritten as
\begin{align*}
&2\int\int \Big[2(T_0(o_1)-U_0(o_2))\left(D_0^U(o_2) -D_0^T(o_1)\right) - \left(4(T_0(o_1)-U_0(o_2))^2-2\right)D_0^T(o_1)D_0^U(o_2) + 1\Big] \\
&\hspace{4em}\times e^{-[T_0(o_1)-U_0(o_2)]^2} h_1(o_1)h_1(o_2)dP_0(o_2)dP_0(o_1).
\end{align*}
\end{proof}

\begin{proof}[Proof of \autoref{thm:pd2}]
We start by noting that $\left.\frac{1}{2}\frac{d^2}{dt^2}\psi_t\right|_{t=0}$ is equal to
\begin{align*}
\frac{1}{2}&\left[\left.\frac{d^2}{dt^2}\Phi^{TT}(P_t)\right|_{t=0} + \left.\frac{d^2}{dt^2}\Phi^{UU}(P_t)\right|_{t=0} - \left.\frac{d^2}{dt^2}\Phi^{TU}(P_t)\right|_{t=0} - \left.\frac{d^2}{dt^2}\Phi^{UT}(P_t)\right|_{t=0}\right] \\
=&\; \int\int \left[\Gamma_0^{RR}(o_1,o_2) + \Gamma_0^{SS}(o_1,o_2) - \Gamma_0^{RS}(o_1,o_2) - \Gamma_0^{SR}(o_1,o_2)\right] h_1(o_1)h_1(o_2)dP_0(o_2)dP_0(o_1) \\
=& \frac{1}{2}\int\int D_2^{\Psi}(o_1,o_2)h_1(o_1)h_1(o_2)dP_0(o_2)dP_0(o_1),
\end{align*}
where the penultimate equality makes use of \autoref{lem:phi2d}. It is easy to verify that $D_2^{\Psi}(o_1,o_2)=D_2^{\Psi}(o_2,o_1)$ for all $o_1,o_2$. The arguments given below the theorem statement in the main text establish the one-degeneracy of $\Gamma_0$ under $\mathscr{H}_0$ show that $E_{P_0}[D_2^{\Psi}(O,o)]= E_{P_0}[D_2^{\Psi}(o,O)] = 0$ for all $o\in\mathcal{O}$ under $\mathscr{H}_0$. \condref{it:Sderivsbdd} ensures that $\norm{D_2^{\Psi}}_{2,P_0^2}<\infty$, and thus $D_2^{\Psi}$ is $P_0^2$ square integrable and one-degenerate. 

Because the first pathwise derivative is zero under the null, we have that
\begin{align*}
\psi_t - \psi_0&= \frac{1}{2}t^2 \int \int D_2^{\Psi}(o_1,o_2) h(o_1) h(o_2) dP_0(o_1)dP_0(o_2) + o(t^2).
\end{align*}
Thus $D_2^{\Psi}$ is a second-order canonical gradient of $\Psi$ at $P_0$.
\end{proof}

We give a lemma before proving \autoref{thm:remthirdorder}.
\begin{applemma} \label{lem:phi2ndorder}
Fix $P\in\mathscr{M}$. For all $T,U\in\mathscr{S}$, let
\begin{align*}
\Rem_P^{\Phi^{TU}}\triangleq&\; \twonorm{L_P^{TU}}\twonorm{M_P^{TU}} + \onenorm{\Rem_P^T}\onenorm{\Rem_P^U} + \fournorm{M_P^{TU}}^4.
\end{align*}
There exists a mapping $\zeta(P,P_0,\cdot) : \mathscr{S}\rightarrow\mathbb{R}$ such that, for all $T,U\in\mathscr{S}$ for which $T_0(O)\eqd U_0(O)$,
\begin{align*}
\Big|P_0^2\Gamma_P^{TU}-\Phi^{TU}(P_0) - \zeta(P,P_0,T) - \zeta(P,P_0,U)\Big|&\lesssim \Rem_P^{\Phi^{TU}}
\end{align*}
\end{applemma}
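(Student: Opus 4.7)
The plan is to first collapse $P_0^2 \Gamma_P^{TU}$ to an expression involving only conditional-expectation-type summaries of the gradients, then Taylor-expand $f(T_0, U_0)$ against this expression to extract the right form of $\zeta$. For each $T\in\mathscr{S}$, define $m_P^T(o) := \int D_P^T(o_1)\, dP_0(o_1 \mid x^T(o))$, which depends on $o$ only through $x^T(o)$, and analogously $m_P^U$. Because $\Gamma_P^{TU}$ is linear in each of $D_P^T(o_1)$ and $D_P^U(o_2)$, conditioning on $(x^T(o_1), x^U(o_2))$ inside the double integral yields
\[
P_0^2 \Gamma_P^{TU} = \iint \bigl[f + \partial_t f \cdot m_P^T(o_1) + \partial_u f \cdot m_P^U(o_2) + \partial_{tu} f \cdot m_P^T(o_1) m_P^U(o_2)\bigr] dP_0(o_1) dP_0(o_2),
\]
where $f(t,u) = e^{-(t-u)^2}$ and every partial is at $(T_P(o_1), U_P(o_2))$. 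From $\int D_P^T dP(\cdot\mid x^T) = 0$ and the definition of $\Rem_P^T$, one also has the workhorse identity $T_P - T_0 = m_P^T + \Rem_P^T$, and analogously for $U$.

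Next I would fourth-order Taylor expand $f(T_0, U_0) = f(T_P - \alpha, U_P - \beta)$ about $(T_P, U_P)$, where $\alpha := m_P^T + \Rem_P^T$ and $\beta := m_P^U + \Rem_P^U$. Since $f$ has uniformly bounded derivatives of every order, the remainder is pointwise $\lesssim (|\alpha|+|\beta|)^4 \lesssim M_P^{TU}(\cdot)^4$ and integrates to $O(\fournorm{M_P^{TU}}^4)$. The key algebraic simplification is the cancellation $m_P^T m_P^U - \alpha\beta = -(m_P^T \Rem_P^U + \Rem_P^T m_P^U + \Rem_P^T \Rem_P^U)$, so the pure $m_P^T m_P^U \partial_{tu} f$ from $\Gamma_P^{TU}$ annihilates the $\alpha\beta\,\partial_{tu}f$ from Taylor, leaving the mixed quadratic piece in the useful ``product of remainders'' form.

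I would then invoke the distributional identity implied by $T_0(O)\eqd U_0(O)$, namely $\int g(T_0(o)) dP_0(o) = \int g(U_0(o)) dP_0(o)$ for every $g$, to decompose each integrated residual term into a ``$T$-only'' piece (absorbed into $\zeta(P,P_0,T)$), a ``$U$-only'' piece (absorbed into $\zeta(P,P_0,U)$), and a genuinely mixed piece. A natural concrete candidate is
\[
\zeta(P,P_0,T) := \tfrac12\bigl[P_0^2 \Gamma_P^{TT} - \Phi^{TT}(P_0)\bigr],
\]
which depends only on $T$ (and $P,P_0$), vanishes at $P = P_0$, and -- using $\Phi^{TT}(P_0) = \Phi^{TU}(P_0) = \Phi^{UU}(P_0)$ under $T_0\eqd U_0$ -- reduces the target bound to controlling $P_0^2[\Gamma_P^{TU} - \tfrac12 \Gamma_P^{TT} - \tfrac12 \Gamma_P^{UU}]$. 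For a typical ``linear'' piece $(2m_P^T + \Rem_P^T)(o_1)\, \partial^k f(T_P(o_1), U_P(o_2))$, the inner integral over $o_2$ swaps -- modulo a Taylor error in $(U_P - U_0)$ -- to $\int \partial^k f(T_P(o_1), T_0(o_2)) dP_0(o_2)$, which is purely $T$-dependent and gets absorbed into $\zeta(T)$; the analogous computations hold for the quadratic pieces with $\partial_{tt}f$ and $\partial_{uu}f$.

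Finally I would bound each mixed residual. Using the uniform bounds on $m_P^T, m_P^U, D_P^T, D_P^U$ supplied by \ref{it:Sbdd}--\ref{it:Spd}, together with the pointwise estimate $|m_P^T(o)| \leq |T_P(o)-T_0(o)| + |\Rem_P^T(o)| \leq M_P^{TU}(o) + L_P^{TU}(o)$, Cauchy--Schwarz controls $m$-by-$\Rem$ cross-terms by $C\twonorm{L_P^{TU}}\twonorm{M_P^{TU}}$, the pure $\iint \Rem_P^T(o_1)\Rem_P^U(o_2)(\text{bounded})\,dP_0(o_1)dP_0(o_2)$ contribution by $C\onenorm{\Rem_P^T}\onenorm{\Rem_P^U}$, and the fourth-order Taylor leftover by $C\fournorm{M_P^{TU}}^4$. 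The main obstacle, more combinatorial than analytical, is that the naive distributional swap on linear terms produces an apparently problematic $m_P^T m_P^U$ cross-contribution; verifying that this is reabsorbed by the symmetric definition of $\zeta(T) + \zeta(U)$ through the analogous expansions of $\Gamma_P^{TT}$ and $\Gamma_P^{UU}$ is the delicate bookkeeping step at the heart of the proof.
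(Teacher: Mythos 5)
Your high-level plan — collapse $P_0^2\Gamma_P^{TU}$ by conditioning $D_P^T,D_P^U$ to $m_P^T,m_P^U$, Taylor-expand $f(T_0,U_0)$ about $(T_P,U_P)$, exploit the distributional equality $T_0(O)\eqd U_0(O)$ to diagonalize, and then bound the genuinely mixed residuals — is close in spirit to the paper's proof, and the concrete choice $\zeta(P,P_0,T)=\tfrac12[P_0^2\Gamma_P^{TT}-\Phi^{TT}(P_0)]$ is a clean way to make the accumulated $F$-terms explicit. However, there is a genuine gap at the very first step, and it propagates fatally.

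The ``workhorse identity'' $T_P-T_0=m_P^T+\Rem_P^T$ is read off from the printed definition of $\Rem_P^T$, but that definition carries a sign slip: the paper's own proof and its worked examples both use the opposite sign $T_P-T_0=-m_P^T+\Rem_P^T$ (equivalently, $\int D_P^T[dP_0-dP](\cdot\mid x^T)$ rather than $[dP-dP_0]$). You can see this directly in \hyperlink{ex:3}{Example 3}: with $D_P^R(o)=y-E_P(Y\mid W=w)$ one computes $m_P^R=T_0-T_P$, so the printed definition gives $\Rem_P^R=2(T_P-T_0)\not\equiv0$, contradicting the paper's explicit claim that $\Rem_P^R\equiv0$; only the sign-corrected identity yields $\Rem_P^R\equiv0$. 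Similarly, the doubly-robust remainder $[1-g_0/g_P][\mu_P-\mu_0]$ displayed in \hyperlink{ex:1}{Example 1} is only produced by the corrected sign. This matters because the entire argument hinges on a first-order cancellation: with the correct sign, the combined linear coefficient from the Taylor term ($\alpha\,\partial_t f$) and the $\Gamma$ correction ($m_P^T\,\partial_t f$) is $\alpha+m_P^T=\Rem_P^T$, which is already second-order, so the linear residual is immediately controlled by $\onenorm{L_P^{TU}}$-sized quantities. With your sign, $\alpha+m_P^T=2m_P^T+\Rem_P^T$ is \emph{first}-order in $M_P^{TU}$. Your proposed fix — swap the inner $o_2$-integral to $\int\partial_t f(T_P(o_1),T_0(o_2))\,dP_0(o_2)$ and absorb it into $\zeta(T)$ — does not rescue this: the swap $U_P(o_2)\to U_0(o_2)$ incurs a pointwise error of order $|U_P-U_0|$, and integrating that against a first-order multiplier of size $|m_P^T|\lesssim M_P^{TU}+L_P^{TU}$ leaves an error of order $\twonorm{M_P^{TU}}^2$, which is not dominated by $\Rem_P^{\Phi^{TU}}=\twonorm{L_P^{TU}}\twonorm{M_P^{TU}}+\onenorm{\Rem_P^T}\onenorm{\Rem_P^U}+\fournorm{M_P^{TU}}^4$. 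The same structural issue recurs one order down for the diagonal quadratic pieces $\alpha^2\partial_{tt}f$, $\beta^2\partial_{uu}f$: a raw distributional swap there leaves an $O(\twonorm{M_P^{TU}}^3)$ residual, which is also not bounded by $\fournorm{M_P^{TU}}^4$. The paper handles this by Taylor-expanding the kernel $z\mapsto(2z^2-1)e^{-z^2}$ a second time around $B_0$, pairing the resulting cubic term with the third-order Taylor term from $e^{-B^2}-e^{-B_0^2}$, and finally applying the FKG inequality to crush the remaining fourth-order cross-moments; ``the analogous computations hold for the quadratic pieces'' glosses over this extra layer of bookkeeping. In short: once you correct the sign of the remainder identity and add the extra Taylor/FKG step on the diagonal quadratics, your approach aligns with the paper's; as written, the linear cancellation is lost and the resulting bound is too weak.
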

\begin{proof}[Proof of \autoref{lem:phi2ndorder}]
In this proof we use $F(P,P_0,T,U)$ to denote any constant which can be written as $\tilde{\zeta}(P,P_0,T)+\tilde{\zeta}(P,P_0,U)$ for expressions $\tilde{\zeta}(P,P_0,T)$ and $\tilde{\zeta}(P,P_0,U)$ which satisfy $\tilde{\zeta}(P,P_0,T)=\tilde{\zeta}(P,P_0,U)$ whenever $T=U$. We will write $c_1 F(P,P_0,T,U) + c_2 F(P,P_0,T,U) = F(P,P_0,T,U)$ for any real numbers $c_1,c_2$. We then fix $\zeta$ to be the final instance of $\tilde{\zeta}$ upon exiting the proof.

Fix $T,U\in\mathscr{S}$. Let $b_0(o_1,o_2)\triangleq T_0(o_1)-U_0(o_2)$ and $b(o_1,o_2)\triangleq T_P(o_1)-U_P(o_2)$ for any $o_1,o_2$. For ease of notation, in the expected values below we will write $B$ and $B_0$ to refer to $b(O_1,O_2)$ and $b_0(O_1,O_2)$, respectively. We also write $T$ for $T_P(O_1)$, $T_0$ for $T_0(O_1)$, $\Rem_P^T$ for $\Rem_P^T(O_1)$, $U$ for $U_P(O_2)$, $U_0$ for $U_0(O_2)$, and $\Rem_P^U$ for $\Rem_P^U(O_2)$.

We have that
\begin{align*}
P_0^2\Gamma_P^{TU}-\Phi^{TU}(P_0)=&\; E_{P_0^2}\left[e^{-B^2} - e^{-B_0^2}\right] + E_{P_0^2}\left[2B\left(D_P^U(O_2) -D_P^T(O_1)\right) e^{-B^2}\right] \\
&-E_{P_0^2}\left[\left(4B^2-2\right)D_P^T(O_1)D_P^U(O_2) e^{-B^2} \right] \\
=&\; E_{P_0^2}\left[e^{-B^2} - e^{-B_0^2}\right] - E_{P_0^2}\left[2B\left(B_0-B\right) e^{-B^2}\right] \\
&+ E_{P_0^2}\left[2B\left(\Rem_P^U -\Rem_P^T\right) e^{-B^2}\right] -E_{P_0^2}\left[\left(4B^2-2\right)\left[T-T_0\right]\left[U-U_0\right] e^{-B^2} \right] \\
&-E_{P_0^2}\left[\left(4B^2-2\right)\left(\left[T-T_0\right]\Rem_P^U + \Rem_P^T\left[U-U_0\right]\right) e^{-B^2} \right] \\
&-E_{P_0^2}\left[\left(4B^2-2\right)\Rem_P^T \Rem_P^U e^{-B^2} \right].
\end{align*}
A third-order Taylor expansion of $b_0\mapsto \exp(-b_0^2)$ about $b_0=b$ yields
\begin{align*}
e^{-b^2}-e^{-b_0^2}=&2b(b_0-b)e^{-b^2} - \left(2b^2-1\right)(b_0-b)^2e^{-b^2} + \frac{2}{3}b\left(2b^2-3\right)(b_0-b)^3e^{-b^2} + O\left((b_0-b)^4\right),
\end{align*}
where the magnitude of the $O((b_0-b)^4)$ term is uniformly bounded above by $C(b_0-b)^4$ for some constant $C>0$ when $b_0$ and $b$ fall in $[-1,1]$. For the second-order term, we have
\begin{align*}
E_{P_0^2}\left[-\left(2B^2-1\right)(B_0-B)^2e^{-B^2}\right]=&\;E_{P_0^2}\left[\left(4B^2-2\right)\left(T-T_0\right)\left(U-U_0\right)e^{-B^2}\right] \\
&-E_{P_0^2}\left[\left(\left[T-T_0\right]^2 + \left[U-U_0\right]^2\right)\left(2B^2-1\right)e^{-B^2}\right].
\end{align*}
Thus we have that
\begin{align}
P_0^2\Gamma_P^{TU}-\Phi^{TU}(P_0) \nonumber=&\, E_{P_0^2}\left[2B\left(\Rem_P^U -\Rem_P^T\right) e^{-B^2}\right] \nonumber \\
&+ O\left(\fournorm{B-B_0}^4\right) - E_{P_0^2}\left[\left(4B^2-2\right)\Rem_P^T \Rem_P^U e^{-B^2} \right] \nonumber \\
&-E_{P_0^2}\left[\left(\left[T-T_0\right]^2 + \left[U-U_0\right]^2\right)\left(2B^2-1\right)e^{-B^2}\right] \nonumber \\
&+ \frac{2}{3}E_{P_0}\left[B\left(2B^2-3\right)(B_0-B)^3e^{-B^2}\right]. \label{eq:almosthere}
\end{align}
A Taylor expansion of $f_1(z)= 2ze^{-z^2}$ shows that there exists a $\tilde{B}_1(o_1,o_2)$ that falls between $B(o_1,o_2)$ and $B_0(o_1,o_2)$ for all $o_1,o_2$ such that
\begin{align}
E_{P_0^2}\left[2B\left(\Rem_P^U -\Rem_P^T\right) e^{-B^2}\right] =&\, E_{P_0^2}\left[\left(\Rem_P^U -\Rem_P^T\right)\left(2B_0e^{-B_0^2} + (B-B_0)\dot{f}_1(\tilde{B})\right)\right] \nonumber \\
=&\, F(P,P_0,T,U) + E_{P_0^2}\left[\left(\Rem_P^U -\Rem_P^T\right)(B-B_0)\dot{f}_1(\tilde{B})\right], \label{eq:taylor1}
\end{align}
where the second equality holds under $\mathscr{H}_0$. The boundedness of $\dot{f}_1$ in $[-2,2]$, the triangle inequality, and the Cauchy-Schwarz inequality yield
\begin{align}
E_{P_0^2}&\left|\left(\Rem_P^U -\Rem_P^T\right)(B-B_0)\dot{f}_1(\tilde{B})\right| \lesssim E_{P_0^2}\left|\left(\Rem_P^U -\Rem_P^T\right)(B-B_0)\right| \nonumber \\
&\lesssim E_{P_0^2}\left|L_P^{TU}(O_1)M_P^{TU}(O_2)\right| + E_{P_0}\left|L_P^{TU}\right|E_{P_0}\left|M_P^{TU}\right| \lesssim \twonorm{L_P^{TU}}\twonorm{M_P^{TU}}. \label{eq:cs}
\end{align}
A Taylor expansion of $f_2(z)= (2z^2-1)e^{-z^2}$ yields that there exists a $\tilde{B}_2$ that falls between $B$ and $B_0$ such that
\begin{align*}
E_{P_0^2}&\left[\left(\left[T-T_0\right]^2 + \left[U-U_0\right]^2\right)\left(2B^2-1\right)e^{-B^2}\right] \\
=&\, E_{P_0^2}\left[\left(\left[T-T_0\right]^2 + \left[U-U_0\right]^2\right)\left(2B_0^2-1\right)e^{-B_0^2}\right] \\
&+ 2E_{P_0^2}\left[\left(\left[T-T_0\right]^2 + \left[U-U_0\right]^2\right)(B-B_0)\left(B(2B^2-3)\right)e^{-B^2}\right] \\
&+ E_{P_0^2}\left[\left(\left[T-T_0\right]^2 + \left[U-U_0\right]^2\right)(B-B_0)^2\frac{\ddot{f}_2(\tilde{B}_2)}{2}\right].
\end{align*}
The first line on the right is equal to $F(P,P_0,T,U)$ under $\mathscr{H}_0$. By the triangle inequality and the boundedness of $\ddot{f}_2$ on $[-2,2]$, the third line satisfies
\begin{align}
E_{P_0^2}&\left[\left(\left[T-T_0\right]^2 + \left[U-U_0\right]^2\right)(B-B_0)^2\frac{\ddot{f}_2(\tilde{B}_2)}{2}\right] \lesssim \sum_{k=0}^4 E_{P_0^2}\left|\left[T-T_0\right]^k \left[U-U_0\right]^{4-k}\right| \nonumber\\
&\lesssim \sum_{k=0}^4 E_{P_0}\left|[M_P^{TU}]^k\right| E_{P_0}\left|[M_P^{TU}]^{4-k}\right| \lesssim \fournorm{M_P^{TU}}^4. \label{eq:fkg}
\end{align}
The final inequality above holds by the FKG inequality \citep{Fortuinetal1971}. It follows that
\begin{align}
E_{P_0^2}&\left[\left(\left[T-T_0\right]^2 + \left[U-U_0\right]^2\right)\left(2B^2-1\right)e^{-B^2}\right] + \frac{2}{3}E_{P_0}\left[B\left(2B^2-3\right)(B_0-B)^3e^{-B^2}\right] \nonumber \\
=&\, \frac{4}{3}E_{P_0^2}\left[\left(\left[T-T_0\right]^3 - \left[U-U_0\right]^3\right)B(2B^2-3)e^{-B^2}\right] + F(P,P_0,T,U) + O(\fournorm{M_P^{TU}}^4) \nonumber \\
=&\, F(P,P_0,T,U) + O(\fournorm{M_P^{TU}}^4), \label{eq:taylor2}
\end{align}
where the final equality holds under $\mathscr{H}_0$ by a Taylor expansion of $z\mapsto z(2z^2-3)e^{-z^2}$ and analogous calculations to those used in (\ref{eq:fkg}). We note that the second equality above uses a different $F$ and a different big-O term than the line above, and that the big-O term can be upper bounded by $C\fournorm{M_P^{TU}}^4$ for a constant $C>0$.

Plugging (\ref{eq:taylor1}), (\ref{eq:cs}), and (\ref{eq:taylor2}) into (\ref{eq:almosthere}), applying the triangle inequality, and using the bounds on $B$ gives the result.
\end{proof}

We give a lemma before proving \autoref{thm:remthirdorder}.
\begin{applemma} \label{lem:Kbd}
Let $K_P\triangleq \onenorm{L_P^{RS}} + \twonorm{M_P^{RS}}^2$ for all $P\in\mathscr{M}$. If $\mathscr{H}_0$ holds, then for all $P\in\mathscr{M}$,
\begin{align*}
\sup_{o_1\in\mathcal{O}'} \left|P_0 \Gamma_P(o_1,\cdot)\right|\lesssim K_P,
\end{align*}
where $\mathcal{O}'\subseteq \mathcal{O}$ is some $P_0$ probability $1$ set. More generally, for all $P_0\in\mathscr{M}$,
\begin{align*}
\left|P_0^2 \Gamma_P-\psi_0\right|&\lesssim K_P.
\end{align*}
\end{applemma}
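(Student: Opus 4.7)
The plan is to decompose $\Gamma_P = \sum_{(T,U) \in \{R,S\}^2} \epsilon_{TU}\,\Gamma_P^{TU}$ with signs $(\epsilon_{RR},\epsilon_{RS},\epsilon_{SR},\epsilon_{SS}) = (+1,-1,-1,+1)$ and handle each summand by adapting the Taylor-expansion machinery used in the proof of \autoref{lem:phi2ndorder}.

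For the general statement $|P_0^2 \Gamma_P - \psi_0|\lesssim K_P$, I would first write $\psi_0 = \Phi^{RR}(P_0) - 2\Phi^{RS}(P_0) + \Phi^{SS}(P_0)$ from the definition of $\Psi$, so that $|P_0^2 \Gamma_P - \psi_0| \leq \sum_{(T,U)} |P_0^2 \Gamma_P^{TU} - \Phi^{TU}(P_0)|$. I would then trace through identity \eqref{eq:almosthere} from the proof of \autoref{lem:phi2ndorder} for each pair $(T,U)$, but without the $\mathscr{H}_0$-based substitutions that identified certain intermediate terms as $F(P,P_0,T,U)$; instead, each such intermediate term would be bounded directly, using the pointwise boundedness of $B_0 e^{-B_0^2}$, $(2B_0^2-1)e^{-B_0^2}$, and related kernels, together with Cauchy-Schwarz and the FKG inequality. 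This produces extra summands bounded by $\lesssim \onenorm{\Rem_P^T}+\onenorm{\Rem_P^U}+\twonorm{M_P^{RS}}^2$. The residual $\Rem_P^{\Phi^{TU}}$ bound from \autoref{lem:phi2ndorder} itself can likewise be shown to be $\lesssim K_P$: the uniform bounds on $L_P^{RS}$ and $M_P^{RS}$ granted by \ref{it:Sbdd} give $\twonorm{L_P^{RS}}^2 \lesssim \onenorm{L_P^{RS}}$ and $\fournorm{M_P^{RS}}^4 \lesssim \twonorm{M_P^{RS}}^2$, after which AM-GM converts the mixed product $\twonorm{L_P^{RS}}\twonorm{M_P^{RS}}$ into $\lesssim K_P$. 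Summing over the four pairs yields the desired inequality.

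For the supremum bound under $\mathscr{H}_0$, I would first invoke the one-degeneracy of $\Gamma_0$ derived in the paper's discussion following \autoref{thm:pd2}, which gives $P_0\Gamma_0(o_1,\cdot)=0$ on a $P_0$-probability-one set $\mathcal{O}'$, so that $P_0\Gamma_P(o_1,\cdot) = P_0(\Gamma_P-\Gamma_0)(o_1,\cdot)$ on $\mathcal{O}'$. For each pair $(T,U)$ I would redo the Taylor expansion of \autoref{lem:phi2ndorder} but integrated only in $o_2$, keeping $o_1$ fixed. The pointwise-in-$o_1$ residual then decomposes into (i) terms that depend on $o_1$ only through $T_P(o_1)$ or $\Rem_P^T(o_1)$ via an $o_2$-integral whose integrand can be rewritten under $\mathscr{H}_0$ by replacing $U_0(O_2)$ with $T_0(O_2)$ (using $R_0(O)\eqd S_0(O)$), making them functions only of the $T$-argument; and (ii) terms already small in the $o_2$-averaged sense, for example $\int \Rem_P^U(o_2)\cdot(\text{bounded kernel})\,dP_0(o_2)$ bounded uniformly in $o_1$ by a constant times $\onenorm{L_P^{RS}}$. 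The type (i) pieces are not individually small in $o_1$, but the $+,-,-,+$ sign pattern annihilates any expression depending only on the $T$-argument (and similarly only on the $U$-argument), so they cancel across the four pairs. The type (ii) pieces are bounded uniformly in $o_1$ by portions of $K_P$ via the $L^\infty$ bounds on $L_P^{RS}$ and $M_P^{RS}$ and Cauchy-Schwarz in $o_2$. Taking the supremum over $o_1\in\mathcal{O}'$ then completes this claim.

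The main obstacle will be the pointwise bookkeeping in the supremum bound: unlike in \autoref{lem:phi2ndorder}, where Cauchy-Schwarz jointly over $o_1$ and $o_2$ cleanly disposed of mixed products, here one must carefully separate, among the Taylor residuals, the cancelling pieces (handled by the $R_0\eqd S_0$ substitution and the sign pattern) from the directly estimable ones, and then verify that every non-cancelling residual has its $o_2$-dependence already averaged into an $L^1(P_0)$ or $L^2(P_0)$ norm small enough to fit inside $K_P$.
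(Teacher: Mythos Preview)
Your plan is correct and shares the paper's essential mechanism: decompose $\Gamma_P$ into the four $\Gamma_P^{TU}$, convert $D_P^U(o_2)$ into $(U_P-U_0)(o_2)$ up to a $\Rem_P^U$ correction via the conditioning identity built into the definition of $\Rem_P^U$, Taylor-expand, and under $\mathscr{H}_0$ use $R_0(O)\eqd S_0(O)$ together with the $+,-,-,+$ signs to cancel the leading pieces. The paper's execution is noticeably leaner, however. For both claims it bypasses \eqref{eq:almosthere} and the third-order expansion of \autoref{lem:phi2ndorder} entirely: once $D_P^U$ is replaced by $(U_P-U_0)-\Rem_P^U$, the resulting expression has the form $g(U_P)+g'(U_P)(U_P-U_0)$ for smooth bounded $g$, so a \emph{first-order} Taylor expansion already yields $g(U_0)+O((U_P-U_0)^2)$, and the $\Rem_P^U$ piece contributes $O(\onenorm{\Rem_P^U})$ uniformly in $o_1$. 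For the supremum bound the paper also skips the subtraction of $\Gamma_0$ and the one-degeneracy reduction; it computes $P_0\Gamma_P^{TU}(o_1,\cdot)$ directly and obtains a leading term depending only on the $T$-slot (note: through \emph{both} $T_P(o_1)$ and $D_P^T(o_1)$, not just $T_P(o_1)$ as you wrote), which then cancels pairwise in $U$ via $\Gamma_P^{TR}-\Gamma_P^{TS}$. Your route through \autoref{lem:phi2ndorder} and the $\Gamma_P-\Gamma_0$ reduction works, but it imports bookkeeping that the direct computation avoids.
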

\begin{proof}[Proof of \autoref{lem:Kbd}]
For $T,U\in\mathscr{S}$, we have that
\begin{align*}
\Gamma_P^{TU}=&\, \left[1+ 2(T_P-U_P) D_P^U\right]e^{-[T_P-U_P]^2} -2\left[(T_P-U_P) + \left(2(T_P-U_P)^2-1\right)D_P^U\right]D_P^Te^{-[T_P-U_P]^2}.
\end{align*}
Above we have omitted the dependence of $\Gamma^{TU}$ on $(o_1,o_2)$, $T$ and $D_P^T$ on $o_1$, and $U$ and $D_P^U$ on $o_2$. For $P_0$ almost all $o_1\in\mathcal{O}$, $P_0\Gamma_P^{TU}(o_1,\cdot)$ is equal to
\begin{align*}
P_0&\left[1+ 2(T_P(o_1)-U_P)(U_P-U_0)\right]e^{-[T_P(o_1)-U_P]^2} + O\left(\onenorm{\Rem_P^U}\right) \\
&-2P_0\left[(T_P(o_1)-U_P) + \left(2(T_P(o_1)-U_P)^2-1\right)(U_P-U_0)\right]D_P^T(o_1)e^{-[T_P(o_1)-U_P]^2}
\end{align*}
where the magnitude of the big-O remainder term is upper bounded by $C\onenorm{\Rem_P^U}$ for a constant $C>0$ which does not depend on $o_1$. Taylor expansions of the first and third terms above yield
\begin{align*}
P_0\Gamma_P^{TU}(o_1,\cdot)=&\, P_0 e^{-[T_P(o_1)-U_0]^2} -2P_0(T_P(o_1)-U_0)D_P^T(o_1)e^{-[T_P(o_1)-U_0]^2} \\
&+ O\left(\onenorm{\Rem_P^U}\right) + O\left(\twonorm{U_P-U_0}^2\right),
\end{align*}
where the magnitude of the big-O term can be upper bounded by $C\twonorm{U_P-U_0}^2$. If $T_0(O)\eqd U_0(O)$, then
\begin{align*}
P_0\Gamma_P^{TU}(o_1,\cdot)=&\, P_0 e^{-[T_P(o_1)-T_0]^2} -2P_0(T_P(o_1)-T_0)D_P^T(o_1)e^{-[T_P(o_1)-T_0]^2} \\
&+ O\left(\onenorm{\Rem_P^U}\right) + O\left(\twonorm{U_P-U_0}^2\right).
\end{align*}
Recall that $T,U\in\mathscr{S}$ were arbitrary. Using that $\Gamma_P\triangleq \Gamma_P^{RR}-\Gamma_P^{RS}-\Gamma_P^{SR}+\Gamma_P^{SS}$ and applying the triangle inequality gives the first result.

We now turn to the second result. For any $T,U\in\mathscr{S}$ and $P\in\mathscr{M}$, we have that
\begin{align*}
P_0^2 \Gamma_P^{TU}=& \Big[2(T_P-U_P)\left(U_0-U_P -T_0 + T_P\right) + 1 \\
&\;- \left(4(T_P-U_P)^2-2\right)(U_P-U_0)(T_P-T_0)\Big]e^{-[T_P-U_P]^2} + O\left(\onenorm{L_P^{TU}}\right) \\
=& \left[2(T_P-U_P)\left(U_0-U_P -T_0 + T_P\right) + 1\right]e^{-[T_P-U_P]^2} + O\left(\onenorm{L_P^{TU}}\right) + O\left(\twonorm{M_P^{TU}}^2\right) \\
=& \Phi^{TU}(P_0) + O\left(\onenorm{L_P^{TU}}\right) + O\left(\twonorm{M_P^{TU}}^2\right),
\end{align*}
where the final equality holds by a first-order Taylor expansion of $(t,u)\mapsto e^{-[t-u]^2}$. The fact that $\Gamma_P\triangleq\Gamma_P^{RR} - 2\Gamma_P^{RS} + \Gamma_P^{SS}$ yields the result.
\end{proof}

\begin{proof}[Proof of \autoref{thm:remthirdorder}]
Fix $P\in\mathscr{M}$ and let $P_0$ satisfy $\mathscr{H}_0$. We have that
\begin{align*}
P_0^2\Gamma_P-\psi_0=&\, P_0^2 \Gamma_P^{RR} - \Phi^{RR}(P_0) + P_0^2 \Gamma_P^{SS} - \Phi^{SS}(P_0) - \left[P_0^2 \Gamma_P^{RS} - \Phi^{RS}(P_0)+P_0^2 \Gamma_P^{SR} - \Phi^{SR}(P_0)\right].
\end{align*}
Taking the absolute value of both sides, applying the triangle inequality, and using \autoref{lem:phi2ndorder} yields
\begin{align*}
\left|P_0^2\Gamma_P-\psi_0\right|&\lesssim \Rem_P^{\Phi^{RR}} + \Rem_P^{\Phi^{SS}} + 2\Rem_P^{\Phi^{RS}}\lesssim \onenorm{L_P^{RS}}^2 + \fournorm{M_P^{RS}}^4 + \twonorm{L_P^{RS}} \twonorm{M_P^{RS}},
\end{align*}
where the final inequality uses the maximum in the definition of $L_P^{RS}$ and $M_P^{RS}$.

The inequality for when $P_0$ satisfies $\mathscr{H}_1$ is proven in \autoref{lem:Kbd}.
\end{proof}

\subsection{Proofs for \secref{sec:est}}
\begin{proof}[Proof of \autoref{lem:firstordnegl}]
By the first result of \autoref{lem:Kbd}, $\left|P_0 \Gamma_n(o_1,\cdot)\right|\lesssim K_n$ for $P_0$ almost all $o_1\in\mathcal{O}'$.  We have that
\begin{align*}
\left|(P_n-P_0)P_0 \Gamma_n\right|&= K_n \left|(P_n-P_0)\left(\frac{P_0 \Gamma_n}{K_n}\right)\right|.
\end{align*}
The fact that $\left\{o_1\mapsto \frac{P_0 \Gamma_n(o_1,\cdot)}{K_n} : \hat{P}_n\right\}$ belongs to a $P_0$ Donsker class with probability approaching $1$, where $\hat{P}_n$ varies over the set of its possible realizations, yields that $(P_n-P_0)\left(\frac{P_0 \Gamma_n}{K_n}\right)=O_{P_0}(n^{-1/2})$ \citep{vanderVaartWellner1996}, and thus the right-hand side above is $O_{P_0}(K_n/\sqrt{n})$. If $K_n=o_{P_0}(n^{-1/2})$, then this yields that the right-hand side above is $o_{P_0}(n^{-1})$.
\end{proof}

\begin{proof}[Proof of \autoref{thm:asympt}]
Plugging \ref{it:1ordempproc}, \ref{it:2ordempproc}, and \ref{it:remnegl} into (\ref{eq:nullexpansion}) yields
\begin{align}
\psi_n-\psi_0&= \Un \Gamma_0 + o_{P_0}(n^{-1}). \label{eq:fixedVstat}
\end{align}
By Section 5.5.2 of \cite{Serfling1980} and the fact that $\Gamma_0$ is $P_0$ degenerate and uniformly bounded, $n\Un \Gamma_0\rightsquigarrow \sum_{k=1}^\infty \lambda_k (Z_k^2-1)$.

We now prove that all of the eigenvalues of $h(o)\mapsto E_{P_0}\left[\tilde{\Gamma}_0(O,o)h(O)\right]$ are nonnegative. Consider a submodel $\{P_t : t\}$ with first-order score $h_1\in L^2(P_0)$ and second-order score $h_2\equiv 0$. By the second-order pathwise differentiability of $\Psi$,
\begin{align*}
\frac{\psi_t-\psi_0}{t^2}&= \frac{1}{2} \int \int D_2^{\Psi}(o_1,o_2)h_1(o_1)h_1(o_2)dP_0(o_1)dP_0(o_2) + o(1).
\end{align*}
The left-hand side is nonnegative for all $t\not=0$ since $\psi_t\ge 0=\psi_0$ under $\mathscr{H}_0$. Thus taking the limit inferior as $t\rightarrow 0$ of both sides shows that
\begin{align*}
\frac{1}{2} \int \int D_2^{\Psi}(o_1,o_2)h_1(o_1)h_1(o_2)dP_0(o_1)dP_0(o_2)&\ge 0.
\end{align*}
Using that $\tilde{\Gamma}_0=\Gamma_0$ under $\mathscr{H}_0$ and $\Gamma_0=\frac{1}{2}D_2^{\Psi}$, we have that $\langle o\mapsto E_{P_0}[\tilde{\Gamma}_0(O,o)h_1(O)],h_1 \rangle\ge 0$, where the inner product is that of $L^2(P_0)$. For any $h_1\in L^2(P_0)$, it is well known that one can choose a submodel $P_t$ with first-order score $h_1\in L^2(P_0)$. Hence the above relation holds for all $h_1\in L^2(P_0)$ and all of the eigenvalues of $h(o)\mapsto E_{P_0}\left[\tilde{\Gamma}_0(O,o)h(O)\right]$ are nonnegative.
\end{proof}

\begin{proof}[Proof of \autoref{cor:Seq0}]
In this case $\Gamma_0(o_1,o_2)=2D_0^R(o_1)D_0^R(o_2)$ under $\mathscr{H}_0$. The central limit theorem yields that $\sigma_1^{-1}\sqrt{n} (P_n-P_0) D_0^R\rightsquigarrow Z$. By the continuous mapping theorem, $\sigma_1^{-2}n(P_n-P_0)^2 \Gamma_0/2\rightsquigarrow Z^2$. Now use that
\begin{align*}
\frac{n \Un \Gamma_0}{2\sigma_1^2}&= \frac{n}{2\sigma_1^2(n-1)}\left[n(P_n-P_0)^2 \Gamma_0 - \frac{1}{n}\sum_{i=1}^n \Gamma_0(O_i,O_i)\right] \\
&= \frac{n}{2\sigma_1^2(n-1)}\left[n(P_n-P_0)^2 \Gamma_0 - \frac{2}{n}\sum_{i=1}^n D_0^R(O_i)^2\right].
\end{align*}
The above quantity converges in distribution to $Z^2 - 1$ by the weak law of large numbers and Slutsky's theorem.
\end{proof}


\begin{proof}[Proof of \autoref{thm:altasdist}]
We have
\begin{align*}
\psi_n&= 2(P_n-P_0) P_0 \Gamma_n + P_0^2 \Gamma_n + \Un \tilde{\Gamma}_n \\
&= 2(P_n-P_0) P_0 \Gamma_0 + P_0^2 \Gamma_n + \Un \tilde{\Gamma}_n + 2(P_n-P_0) P_0\left(\Gamma_n-\Gamma_0\right).
\end{align*}
By assumption, $\Un \tilde{\Gamma}_n=o_{P_0}(n^{-1/2})$. The final term is $o_{P_0}(n^{-1/2})$ by the Donsker condition and the consistency condition \citep{vanderVaartWellner1996}. By the second result of \autoref{lem:Kbd} and the assumption that $K_n=o_{P_0}(n^{-1/2})$, this yields that
\begin{align*}
\psi_n-\psi_0=&\, 2(P_n-P_0) P_0 \Gamma_0 + o_{P_0}(n^{-1/2}).
\end{align*}
Multiplying both sides by $\sqrt{n}$, and applying the central limit theorem yields the result.
\end{proof}

\begin{proof}[Proof of \autoref{cor:testconsistent}]
We have that
\begin{align*}
P_0^n\left\{n\psi_n\le \hat{q}_{1-\alpha}^{ub}\right\}&= P_0^n\left\{\frac{\sqrt{n}(\psi_n-\psi_0)}{\sigma_0}\le \frac{\hat{q}_{1-\alpha}^{ub}n^{-1/2} - \sqrt{n}\psi_0}{\sigma_0}\right\}
\end{align*}
Fix $0<\epsilon<\psi_0$. The right-hand side is equal to
\begin{align*}
P_0^n&\left\{\frac{\sqrt{n}(\psi_n-\psi_0)}{\sigma_0}\le \frac{\hat{q}_{1-\alpha}^{ub}n^{-1/2} - \sqrt{n}\psi_0}{\sigma_0}\textnormal{ and }\hat{q}_{1-\alpha}^{ub}n^{-1}\le \epsilon\right\} + o(1) \\
&\le P_0^n\left\{\frac{\sqrt{n}(\psi_n-\psi_0)}{\sigma_0}\le \frac{\sqrt{n}(\epsilon-\psi_0)}{\sigma_0}\textnormal{ and }\hat{q}_{1-\alpha}^{ub}n^{-1}\le \epsilon\right\} + o(1) \\
&\le P_0^n\left\{\frac{\sqrt{n}(\psi_n-\psi_0)}{\sigma_0}\le \frac{\sqrt{n}(\epsilon-\psi_0)}{\sigma_0}\right\} + o(1) = Pr\left\{Z\le \frac{\sqrt{n}(\epsilon-\psi_0)}{\sigma_0}\right\} + o(1),
\end{align*}
where $Z\sim N(0,1)$. The final equality holds by \autoref{thm:altasdist} and the well known result about the uniform convergence of distribution functions at continuity points when random variables converge in distribution \citep[see, e.g., Theorem 5.6 in][]{Boos&Stefanski2013}. The result follows by noting that $(\epsilon-\psi_0)/\sigma_0$ is negative and that $\lim_{z\rightarrow-\infty}Pr(Z\le z)=0$.
\end{proof}

\bibliography{persrule}

\end{document}